\date{\today}
\keywords{}
\author{Pierre Berger}
\author{Romain Dujardin}
\title{On stability and hyperbolicity for polynomial automorphisms of $\cd$}
\address{LAMA  \\
Universit\'e Paris-Est Marne-la-Vall\'ee  \\
5 boulevard Descartes \\
77454 Champs sur Marne  \\
France}
\email{romain.dujardin@u-pem.fr}
\address{LAGA \\
 Institut Galilée \\ 
 Université Paris 13\\
 99 avenue J.B. Clément \\ 93430 Villetaneuse \\
 France}
 \email{berger@math.univ-paris13.fr}
\newcommand{\cc}{\mathbb{C}}
\newcommand{\dd}{\mathbb{D}}
\newcommand{\nn}{\mathbb{N}}
\newcommand{\pp}{\mathbb{P}}
\newcommand{\e}{\varepsilon}
\newcommand{\cv}{\rightarrow}
\newcommand{\fr}{\partial}
\newcommand{\om}{\Omega}
\newcommand{\set}[1]{\left\{#1\right\}}
\newcommand{\norm}[1]{\left\Vert#1\right\Vert}
\newcommand{\abs}[1]{\left\vert#1\right\vert}
\newcommand{\cd}{{\cc^2}}
\newcommand{\rest}[1]{ \arrowvert_{#1}}
\newcommand{\unsur}[1]{\frac{1}{#1}}
\newcommand{\lrpar}[1]{\left(#1\right)}
\newcommand{\la}{\lambda}
\newcommand{\lo}{{\lambda_0}}
\newcommand{\loc}{{\mathrm{loc}}}
\newcommand{\La}{\Lambda}
\newcommand{\itm}{\item[-]}
\DeclareMathOperator{\modul}{mod}
\DeclareMathOperator{\jac}{Jac}
\DeclareMathOperator{\tub}{Tub}
\newtheorem{prop}{Proposition} [section]
\newtheorem{thm}[prop] {Theorem}
\newtheorem{defi}[prop] {Definition}
\newtheorem{lem}[prop] {Lemma}
\newtheorem{cor}[prop]{Corollary}
\newtheorem{theo}{Theorem}
\newtheorem{claim}[prop]{Claim}
\newtheorem{coro}[theo]{Corollary}
\newtheorem{defprop}[prop]{Definition-Proposition}
\theoremstyle{remark}
\newtheorem{exam}[prop]{Example}
\newtheorem{rmk}[prop]{Remark}
\begin{document}
\selectlanguage{english}

\begin{abstract} 
Let $(f_\la)_{\la\in \La}$ be a holomorphic family of polynomial automorphisms of $\cd$. Following previous work
of Dujardin and Lyubich,
we say that such a family is weakly stable if saddle periodic orbits do not bifurcate. It is an open question whether
this property is equivalent to structural stability on the Julia set $J^*$ 
 (that is, the closure of the set of saddle periodic points). 
 
 In this paper we introduce a notion of regular point for a polynomial automorphism, inspired by Pesin  theory, 
  and prove that in   a weakly stable family, the set of  
   regular points moves holomorphically. It follows that a weakly stable family
  is probabilistically  structurally stable, in a very strong sense.  
 Another consequence of these techniques is that  weak stability preserves uniform hyperbolicity on $J^*$. 
\end{abstract}

\maketitle

\section{Introduction}

Let $(f_\la)_{\la\in \La}$ be a holomorphic family of polynomial automorphisms of $\cd$, with non-trivial dynamics\footnote{A necessary and sufficient condition for this is that the {\em dynamical degree} $d = \lim (\deg(f^n))^{1/n}$  satisfies $d\geq 2$,
 see \S\ref{sec:prel} for more details.},   parameterized by a connected complex manifold $\La$. 
A basic  stability/bifurcation dichotomy in this setting was introduced by M. Lyubich and the second author  in \cite{dl}.
 In that paper it was proved in particular that under a moderate dissipativity 
  assumption\footnote{that is, the complex Jacobian $\mathrm{Jac}(f)$ satisfies $\abs{\mathrm{Jac}(f)}<d^{-2}$.}, 
stable parameters together with parameters exhibiting a homoclinic tangency form a dense subset of $\La$.
This confirms in this setting  a (weak version of a) well-known conjecture of Palis.   
The notion of stability into consideration here is the following: a family is said to be {\em weakly stable} if  
 periodic orbits do not bifurcate. Specifically, this means that  the eigenvalues of the differential do not cross the unit circle. 

In one-dimensional holomorphic dynamics, this  seemingly weak notion of stability actually leads to the usual  one of
 structural stability (on the Julia set or on the whole sphere) thanks to the theory of {\em holomorphic motions} 
developed independently by Mañé, Sad and Sullivan and Lyubich \cite{mss, lyubich-bif, lyubich-bif2}. 

As it is well-known, the basic theory of holomorphic motions breaks down in dimension 2, and 
 a corresponding notion of {\em branched holomorphic motion}  (where collisions are allowed), 
was designed in \cite{dl}. 
To be more specific, let  $J^*$ be the closure of the set of saddle periodic orbits. It was shown by  Bedford, Lyubich and Smillie 
that $J^*$ contains all homoclinic and heteroclinic intersections of saddle points, and conversely, if $p$ is any saddle point, then
$W^s(p)\cap W^u(p)$ is dense in $J^*$. 
It was proved in \cite{dl} that if $(f_\la)_{\la\in \La}$ is weakly stable,  then  there is an  
 equivariant 
  branched holomorphic motion of $J^*$, that  is unbranched over the set of periodic points 
  and homoclinic (resp. heteroclinic) intersections. This 
  means that such points have a unique holomorphic continuation in the family, and furthermore, this continuation cannot collide with other 
  points in $J^*$ (see below \S \ref{subs:prel autom} for more details).  The underlying idea  is that the motion is unbranched on sets satisfying 
  a local (uniform) expansivity property.

  Still, it remains an open question whether a weakly stable family is structurally stable on $J^*$. 
A weaker version of this question, which is natural in view of  the above analysis, 
 is whether the unbranching property   holds  
  generically with respect to hyperbolic invariant probability measures.
 
 \medskip
 
The first main goal in this paper is to answer this second question.  We introduce a notion  
of {\em regular point},    simply defined as follows: $p\in J^*$ is 
regular if there exists a sequence of saddle points $(p_n)_{n\geq 1}$ converging to $p$ such that $W^u_\loc(p_n)$ 
and $W^s_\loc(p_n)$ are of  size uniformly bounded from below   as $n\cv\infty$
and do not asymptotically coincide
(see below \S \ref{sec:regular} for the formal definition, and \S\ref{subs:param1} for the notion of the local size of a manifold). 
The set $\mathcal{R}$ of regular points is invariant and dense in $J^*$ since it contains saddle points and homoclinic intersections. 
More interestingly,   Katok's closing lemma \cite{katok} implies that $\mathcal R$ is of full mass relative to 
any hyperbolic invariant probability measure. Observe however that our definition of regular point makes 
no reference to any invariant measure. Notice also that in our context, thanks to the Ruelle inequality,
any invariant measure with positive entropy is hyperbolic.

\medskip

Our first main result is the following.

\begin{theo}\label{theo:pesin strong}
Let $(f_\la)_{\la\in \La}$ be a  substantial family of polynomial automorphisms of $\cd$ of dynamical degree $d\geq 2$, 
that is weakly stable.   

Then the set of regular points moves holomorphically and without collisions. More precisely, for every $\la\in \La$, every regular point of $f_\la$ 
admits a unique continuation under the branched motion of $J^*_\la$, which remains regular in the whole family. In particular, 
the restrictions  $f_\la\rest{\mathcal R_\la}$ are topologically conjugate. 
\end{theo}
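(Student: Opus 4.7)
The strategy is to construct the continuation $p(\la)$ of a regular point $p \in \mathcal R_{\lo}$ as the intersection of the holomorphic limits of the local stable and unstable manifolds of the approximating saddles, and then identify this continuation with a branch of the branched motion of \cite{dl}, proving along the way that no collisions occur. The holomorphic continuation of saddles is already furnished by weak stability; the new content is to pass to the limit.

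\textbf{Construction via limiting local manifolds.} Fix $\lo \in \La$, $p \in \mathcal R_{\lo}$, and a sequence of approximating saddles $(p_n)$ such that $W^{s/u}_\loc(p_n)$ have size $\geq r > 0$ and do not asymptotically coincide. Weak stability provides each $p_n$ with a unique non-colliding holomorphic continuation $\la \mapsto p_n(\la)$ on $\La$, and by the classical graph transform the local manifolds $W^{s/u}_\loc(p_n(\la))$ depend holomorphically on $\la$ as graphs over fixed holomorphic disks. The first substantive step is to show that the size of $W^{s/u}_\loc(p_n(\la))$ stays bounded below uniformly in $n$, locally uniformly in $\la$: any degeneration along a subsequence would produce, in the limit, either a homoclinic tangency or an asymptotic coincidence of stable and unstable directions at some parameter, contradicting weak stability or the hypothesis on $p$. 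Granting this estimate, Montel's theorem extracts subsequential holomorphic limits $\mathcal W^{s/u}(p,\la)$ of definite size, whose unique transverse intersection defines a candidate $p(\la) \in J^*_\la$ depending holomorphically on $\la$.

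\textbf{Uniqueness, non-collision, and conclusion.} The candidate $p(\la)$ must not depend on the approximating sequence: given two such sequences, the corresponding local manifolds agree in Hausdorff distance at $\lo$, and the uniform size estimate together with holomorphic dependence forces this closeness to persist at every $\la$, so both constructions yield the same intersection. To rule out collisions, suppose $p \neq q$ in $\mathcal R_{\lo}$ yet $p(\la) = q(\la)$ at some $\la$: then $\mathcal W^{s/u}(p,\la)$ and $\mathcal W^{s/u}(q,\la)$ share a transverse intersection point at $\la$, and a holomorphic rigidity argument applied to the motion of these disks (a branched analogue of the $\la$-lemma, using that each disk is a subsequential limit of unbranched motions of local manifolds of saddles) forces them to coincide at $\lo$ as well, contradicting $p \neq q$. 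By construction $\la \mapsto p(\la)$ is a branch of the branched motion of \cite{dl}, and the uniqueness argument shows it is the only branch through $p$. Regularity of $p(\la)$ is immediate, since $(p_n(\la))$ is still a sequence of saddles with uniformly definite, non-coincident local manifolds converging to $p(\la)$. Equivariance is inherited from the equivariance of the saddle continuations, yielding that the restrictions $f_\la\rest{\mathcal R_\la}$ are topologically conjugate.

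\textbf{Main obstacle.} The decisive step is the uniform lower bound on the sizes of $W^{s/u}_\loc(p_n(\la))$ as $n\to\infty$, for $\la$ ranging in a compact subset of $\La$. This is the point where weak stability must be coupled quantitatively with the non-asymptotic-coincidence clause in the definition of $\mathcal R$, and it is essentially where Pesin-style control enters the picture. Once this estimate is in hand, the remainder of the proof is a rather formal combination of Montel's theorem, the graph transform, and the identity principle for holomorphic functions on $\La$.
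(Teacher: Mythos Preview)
Your outline identifies the right bottleneck --- the uniform lower bound on the size of $W^{s/u}_\loc(p_n(\la))$ as $n\to\infty$ --- but the justification you offer for it is not a proof, and the paper's actual argument looks nothing like it.

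\medskip

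\textbf{The size estimate.} You write that a degeneration of the local manifold size ``would produce, in the limit, either a homoclinic tangency or an asymptotic coincidence of stable and unstable directions at some parameter.'' This is not true: a local stable manifold can shrink (i.e.\ fail to be a graph of slope $\le 1$ over its tangent at scale $r$) simply by becoming highly curved, with no tangency appearing anywhere. Weak stability forbids bifurcations of periodic types, but it does not by itself control the \emph{geometry} of invariant manifolds. The paper obtains the size bound by a completely different route: it uses the holomorphic motion $h_\la$ along unstable manifolds constructed in \cite{dl}, the Harnack inequality applied to $\la\mapsto G^+_\la(h_\la(z))$ (Lemma~\ref{lem:harnack}), the H\"older continuity and the properness of $G^+\rest{K^-}$, and the Koebe distortion theorem applied to the unstable parameterizations $\psi^u_\la$. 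None of this is ``Pesin-style control''; it is potential theory plus one-variable conformal geometry transported via the intrinsic holomorphic motion.

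\medskip

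\textbf{Local versus global.} Even with these tools, the paper only obtains the uniform size bound \emph{locally} in $\la$: Proposition~\ref{prop:surface} gives size $\ge r_1$ for $\abs{\la-\lo}<\delta(r_1,r_2,\widetilde\La)$. For arbitrary $\la_1\in\La$ the paper does \emph{not} prove a size bound depending only on $r_0$ and $\widetilde\La$ (see the Remark after Lemma~\ref{lem:multiplicity}). Instead, global propagation (Step~2) passes through \emph{area} bounds (Proposition~\ref{prop:uniform}), Bishop's theorem, and a delicate multiplicity-one argument (Lemma~\ref{lem:multiplicity}) that flows a hypothetical multiplicity $\ge 2$ back to $\lo$ via $h_\la^s$ to manufacture two distinct heteroclinic points whose continuations collide --- contradicting unbranching at such points (Corollary~\ref{cor:particular}). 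Your plan, which assumes a global size bound and then just takes Montel limits, skips this entirely.

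\medskip

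\textbf{Transversality.} Your construction defines $p(\la)$ as the ``unique transverse intersection'' of the limiting disks. But regular points are not assumed transverse (Definition~\ref{defi:regular}); the local stable and unstable manifolds may be tangent. The paper handles unbranching in this case by working inside the codimension-one hypersurface $\widehat W^s_{r_0}(\widehat p)$ and invoking Hurwitz against the disjoint approximating hypersurfaces $\widehat W^s_{r_0}(\widehat p_n)$, using s-exposure (via $G^-$) rather than transversality. The preservation of transversality for transverse regular points is a separate argument (Corollary~\ref{cor:transverse}), involving a branched-cover trick when a tangency tries to unfold.
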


The meaning of the word ``substantial" will be explained in \S\ref{subs:prel autom} below; 
it will be enough for the moment to note that any dissipative family is substantial by definition.
 By ``topologically conjugate"  we mean that  there exists a homeomorphism $h: \mathcal R_\la \cv \mathcal R_{\la'}$
 such that $h\circ f_\la = f_{\la'}\circ h $ in restriction to $\mathcal R_\la$. 

  Let us say that a     polynomial automorphism $f$ is {\em probabilistically structurally stable} (in some 
given family $(f_\la)$) if for every $f'$ sufficiently close to $f$, there exists a 
set $\mathcal{R}_f$ (resp. $\mathcal{R}_{f'}$) which is of full measure with respect to 
any hyperbolic invariant probability measure for $f$ (resp. $f'$) together with a continuous 
conjugacy  $\mathcal{R}_f\cv\mathcal{R}_{f'}$. 

Recall also from the work of  Friedland and Milnor \cite{fm} that every dynamically non-trivial 
polynomial automorphism is conjugate to a composition of Hénon mappings. 

Together with \cite[Thm A \& Cor. 4.5]{dl}, Theorem \ref{theo:pesin strong} enables us to 
go one step further in the direction of the   Palis Conjecture mentioned above. 

\begin{coro} 
Let $f$ be a composition of Hénon mappings in $\cd$. Then:
\begin{itemize}
\item $f$ can be approximated in the space of polynomial automorphisms of degree $d$ 
either by a    probabilistically
structurally stable map, or by one  possessing infinitely many sinks or sources.
\item   If $f$ is moderately dissipative and not probabilistically structurally stable, then
$f$ is a limit of automorphisms displaying  homoclinic
tangencies.
\end{itemize}
\end{coro}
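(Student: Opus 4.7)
My plan is to derive the corollary as a direct synthesis of Theorem \ref{theo:pesin strong} with the two results of \cite{dl} invoked in the setup. By the Friedland-Milnor classification, $f$ embeds in the finite-dimensional family $\La$ of polynomial automorphisms of $\cd$ of degree $d$; after passing to a substantial open subset of $\La$ containing $f$ if needed, Theorem \ref{theo:pesin strong} applies throughout.

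The first move is to recast both bullets in terms of a single dichotomy. Let $\La^{\mathrm{ws}}$ be the weakly stable locus, $\bif = \La \setminus \La^{\mathrm{ws}}$ the bifurcation locus, and $\La^{\mathrm{prob}}$ the set of probabilistically structurally stable parameters. The defining property of $\La^{\mathrm{prob}}$ quantifies over all $f'$ sufficiently close to $f$, so $\La^{\mathrm{prob}}$ is open in $\La$. Applying Theorem \ref{theo:pesin strong} in a weakly stable neighborhood of any $\lo \in \La^{\mathrm{ws}}$ produces, via the equivariant branched motion of $J^*$ restricted to the regular points $\mathcal R_\la$, a continuous conjugacy between $f_\lo \rest{\mathcal R_\lo}$ and $f_\la \rest{\mathcal R_\la}$ for every nearby $\la$. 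Since $\mathcal R_\la$ carries every hyperbolic invariant probability measure (by Katok's closing lemma, as recalled in the introduction), these conjugacies witness probabilistic structural stability of $\lo$. Thus $\La^{\mathrm{ws}} \subseteq \La^{\mathrm{prob}}$, and hence $\La \setminus \La^{\mathrm{prob}} \subseteq \bif$.

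For the first bullet I then split into two cases. If $f \in \overline{\La^{\mathrm{prob}}}$, the first alternative holds trivially. Otherwise $f$ lies in the interior of $\La \setminus \La^{\mathrm{prob}}$, which by the above inclusion is contained in the interior of $\bif$; there I invoke \cite[Cor.~4.5]{dl}, which asserts that parameters possessing infinitely many sinks or sources are dense in $\bif$, producing the map required by the second alternative. For the second bullet, I assume $f$ is moderately dissipative and not prob.\ stable, so $f \in \bif$ by the inclusion above. Then \cite[Thm.~A]{dl} guarantees in the moderately dissipative regime that $\La^{\mathrm{ws}}$ together with the set of parameters displaying a homoclinic tangency form a dense subset of $\La$; equivalently, $\bif$ lies in the closure of the homoclinic tangency locus. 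Hence $f$ is a limit of such parameters.

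The main obstacle I foresee is purely formal: verifying that probabilistic structural stability is literally an open condition in $\La$ and is genuinely delivered by Theorem \ref{theo:pesin strong}. Concretely, this means checking that the regular set $\mathcal R_\la$ is of full measure for every hyperbolic invariant probability measure and that the branched holomorphic motion of $J^*_\la$ does restrict there to a bona fide topological conjugacy in families, so that it fits the definition of probabilistic structural stability for arbitrary pairs of nearby parameters, not merely for pairs containing a fixed basepoint. Once this bookkeeping between $\La^{\mathrm{ws}}$, $\La^{\mathrm{prob}}$, and $\bif$ is in place, the two alternatives in each bullet follow by direct inspection of which side of the dichotomy the parameter $f$ lies on.
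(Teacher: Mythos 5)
Your argument is correct and is essentially the one the paper intends: the corollary is stated without an explicit proof, as a direct combination of Theorem~\ref{theo:pesin strong} (which upgrades weak stability to probabilistic structural stability, using that the regular set has full measure for every hyperbolic invariant measure by Katok's closing lemma and that the unbranched motion gives a topological conjugacy) with Theorem~A and Corollary~4.5 of \cite{dl}. The only superfluous point is your claim that the probabilistically stable locus is open, which does not obviously follow from the definition (the full-measure sets and conjugacies are allowed to depend on the pair of maps, so they need not compose) but is never actually used in your case split, which only requires the inclusion of the weak stability locus in the probabilistically stable locus.
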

  
The main step of the  proof of Theorem \ref{theo:pesin strong} consists in studying how  
the size of local stable and unstable manifolds of a given saddle point varies in a weakly stable family. More precisely, 
assume that for some $\lo\in \La$, $p(\lo)$ is a saddle point such that $W^s(p(\lo))$ has bounded geometry at scale $r_0$ at $p(\lo)$. Since  
$(f_\la)$ is weakly stable, $p(\lo)$ persists as a saddle point $p(\la)$ for ${\la \in \La}$. In \S\ref{sec:size}, we give estimates on the geometry 
of $W^s_{\rm loc}(p(\la))$ {\em which depend only on $r_0$}, based on the extension properties 
of the branched holomorphic motion of $J^*$ along  unstable manifolds devised in \cite{dl}.  

These estimates are used to control the geometry of the local
  ``center  stable manifold" of $\set{(\la,p(\la)), \ \la\in \La}$, which is of codimension 1 in $\La\times \cd$. 
  With this codimension 1 subset at hand, we can 
  prevent collisions between the motion of points in $J^*$  using  classical tools from complex geometry
   (like the persistence of proper intersections and the Hurwitz Theorem).

 \medskip

We actually prove a more general version of Theorem \ref{theo:pesin strong}, which involves only regularity in one of 
 the stable or the unstable directions (see Theorem \ref{thm:regular} below). One motivation for this is that  in the dissipative setting 
 it is possible in certain situations to take advantage of 
 dissipativity to obtain a good control on the geometry of stable manifolds (see Example \ref{exam:zero}).

\medskip

If $f$ is uniformly hyperbolic on $J^*$, then it is well known that $f\rest{J^*}$ is structurally stable. In particular, if
$(f_\la)_{\la\in \La}$ is any family of polynomial automorphisms, and $\lo\in \La$ 
is such that $f_\lo$ is uniformly hyperbolic on $J^*_\lo$, then  $(f_\la)$ is (weakly) stable
 in some neighborhood of $\lo$. 
Thus, $\lo$ belongs to   a {\em hyperbolic component} in $\La$, where this uniform hyperbolicity is preserved, which is itself contained in  a possibly larger {\em weak stability component}. Our next main result asserts that these two components actually coincide.

\begin{theo}\label{theo:hyp}
Let $(f_\la)_{\la\in \La}$ be a  substantial family of polynomial automorphisms of $\cd$of dynamical degree $d\geq 2$, 
that is weakly stable. 
Assume that there exists $\lo\in \La$ such that $f_\lo$ is uniformly hyperbolic on $J^*_\lo$. Then for every $\la\in \La$, 
$f_\la$ is uniformly hyperbolic on $J^*_\la$.
\end{theo}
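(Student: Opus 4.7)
The plan is to show that the set $\Lambda_{\mathrm{hyp}} := \{\lambda \in \Lambda : f_\lambda \text{ is uniformly hyperbolic on } J^*_\lambda\}$ is both open and closed in $\Lambda$, so that connectedness forces $\Lambda_{\mathrm{hyp}} = \Lambda$. Openness is classical persistence of Axiom A basic pieces and does not even require weak stability. For closedness, I would use the well-known fact that uniform hyperbolicity on $J^*$ can be detected at the level of saddle periodic orbits: $f$ is uniformly hyperbolic on $J^*$ if and only if there exist constants $r > 0$ and $\theta > 0$ such that every saddle periodic point $p$ of $f$ admits local stable and unstable manifolds of geometric size at least $r$, meeting at $p$ with an angle at least $\theta$. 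At $\lambda_0$ this provides uniform constants $r_0,\theta_0$ valid for all saddles.

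By weak stability, each saddle $p(\lambda_0)$ continues holomorphically as a saddle $p(\lambda)$ over all of $\Lambda$, and this correspondence is a bijection onto the saddles of $f_\lambda$. The size estimates of \S\ref{sec:size}, which are the technical heart behind Theorem \ref{theo:pesin strong} and whose output depends only on the input scale $r_0$, then yield a uniform scale $r_1 > 0$ (possibly depending on a relatively compact subset of $\Lambda$) such that $W^s_{\rm loc}(p(\lambda))$ has bounded geometry at scale $r_1$ for every saddle continuation and every $\lambda$ in that subset. Running the same argument for $f_\lambda^{-1}$ yields a uniform lower bound on the size of $W^u_{\rm loc}(p(\lambda))$. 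To recover the uniform angle bound at saddles of $f_\lambda$, I would argue by contradiction: a sequence $p_n(\lambda)$ of saddle continuations along which $\angle(E^s(p_n(\lambda)), E^u(p_n(\lambda))) \to 0$ would, in view of the uniform manifold sizes just obtained, produce in the limit a point of $J^*_\lambda$ where local stable and unstable manifolds of uniform size meet tangentially, i.e.\ a homoclinic or heteroclinic tangency. Such a configuration is incompatible with weak stability; more conceptually, it would violate the collision-free motion of regular points established in Theorem \ref{theo:pesin strong}, since the codimension-one center-stable manifold through $\{(\lambda, p_n(\lambda))\}$ built from the same size estimates would develop a non-proper intersection with the unstable motion, contradicting the Hurwitz argument used there.

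With uniform manifold sizes and uniform transversality at a dense set of saddles in $J^*_\lambda$, the hyperbolic splitting extends continuously to all of $J^*_\lambda$, proving $\lambda \in \Lambda_{\mathrm{hyp}}$ and closing the argument. The main obstacle is the transversality step: the size estimates yield lower bounds on local manifolds rather directly, but extracting the angle bound is not automatic and really uses the non-collision property of Theorem \ref{theo:pesin strong} (or equivalently an independent absence-of-tangencies statement for weakly stable substantial families). A secondary technical point is that the size estimates may only be uniform over relatively compact subsets of $\Lambda$, but this is harmless since the desired conclusion is pointwise in $\lambda$.
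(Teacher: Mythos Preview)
Your proposal has a genuine gap at the size step. You assert that the estimates of \S\ref{sec:size} produce a scale $r_1>0$, depending only on the input $r_0$ (and a relatively compact parameter domain), such that every saddle continuation $p(\lambda)$ has local stable manifold of size $r_1$. But this is not what \S\ref{sec:size} delivers. Proposition~\ref{prop:surface} does give size control with the right uniformity, but only for $|\lambda-\lambda_0|<\delta(r_1,r_2,\widetilde\Lambda)$, i.e.\ in a neighborhood of $\lambda_0$ whose width depends on the target size; iterating it loses a definite fraction of the size at each step with no control on how many steps are needed to reach a given $\lambda$. Proposition~\ref{prop:uniform} works over all of $\widetilde\Lambda$ but bounds only the \emph{area} of $W^s_r(p(\lambda))$, not its size. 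The passage from area to size is carried out in Step~2 of the proof of Theorem~\ref{thm:regular} via the multiplicity-$1$ argument (Lemma~\ref{lem:multiplicity}), and there the resulting size depends on the geometry of the limiting disk $W$ at $\lambda_1$; the remark immediately after Lemma~\ref{lem:multiplicity} states explicitly that it is unclear whether this size depends only on $r_0$ and $\widetilde\Lambda$. So the uniform $r_1$ you invoke is not available, and this is not the ``secondary technical point'' you flag but the main obstacle.

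The paper's route sidesteps this. Rather than transporting a global size bound, it argues pointwise: each $q\in J^*_{\lambda_1}$ is the continuation of a point of $J^*_{\lambda_0}$ (since the motion of $J^*$ is a homeomorphism onto $J^*_{\lambda_1}$), and Proposition~\ref{prop:strong regular} shows it is \emph{uniformly regular} in the sense of Definition~\ref{defi:uniformly regular}, with a size that may depend on $q$. Corollary~\ref{cor:transverse} shows transversality persists at each such point. A compactness argument on $J^*_{\lambda_1}$ (Lemma~\ref{lem:lamination}) then upgrades pointwise uniform regularity to a global uniform size, and the Bedford--Smillie criterion (Proposition~\ref{prop:criterion}, resting on \cite[Thm.~8.3]{bs8}) turns the resulting pair of transverse laminations into uniform hyperbolicity. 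Your transversality step is also underspecified: a tangency of local stable and unstable manifolds at a \emph{limit} of saddles is not a homoclinic tangency in the usual sense and is not directly forbidden by weak stability; the paper's argument (proof of Corollary~\ref{cor:transverse}) rules it out by combining persistence of intersection multiplicity with the unbranched motion of regular points, which in turn already presupposes the regularity you are still trying to establish.
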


As a result, this theorem enables to identify the phenomena responsible for the breakdown of uniform hyperbolicity
in a family of polynomial automorphisms:  hyperbolicity  can only be destroyed  
by the bifurcation of some saddle orbit to a  sink  or a source   
(which by \cite{dl} implies the creation of  homoclinic tangencies, in the moderately dissipative setting). 

The proof of Theorem \ref{theo:hyp} relies on the techniques of Theorem \ref{theo:pesin strong}, together with a geometric criterion for hyperbolicity due to Bedford and Smillie \cite{bs8}.

\medskip

The plan of the paper is the following. In \S\ref{sec:prel} we  discuss the notion of weak stability, following \cite{dl}. We also establish some 
preliminary results on sequences of subvarieties in $\cc^d$. In \S\ref{sec:size} we study how 
the geometry of unstable manifolds  varies in a weakly 
stable family. In \S\ref{sec:regular} we introduce the notion of regular point and prove Theorem \ref{theo:pesin strong}, 
and finally \S\ref{sec:hyperbolic} is devoted to the proof of Theorem \ref{theo:hyp}. 

Throughout the paper, we make the standing assumption that the parameter space $\La$ is the unit disk. In view of Theorems \ref{theo:pesin strong} and \ref{theo:hyp} this is not a restriction since we can always connect any two parameters in $\La$ by a chain of holomorphic disks. 
We also 
use the classical convention $C(a, b, \ldots)$ to denote a constant which depends only on the previously defined quantities $a$, $b$, etc.
 
 \smallskip
 
 {\bf Acknowledgments.} This research was partially supported by the ANR project LAMBDA,  ANR-13-BS01-0002 and the
Balzan project of J. Palis.
 
\section{Preliminaries}\label{sec:prel}

In this section we collect  some basic facts  on polynomial automorphisms of $\cd$, and give a brief account on the notion of weak stability
introduced in  \cite{dl}. We also establish some preliminary results on sequences of analytic subsets.  

\subsection{Families of polynomial automorphisms of $\cd$} \label{subs:prel autom}
Let us start with some standard facts about the iteration of an individual polynomial automorphism $f$ of $\cd$ 
(see \cite{bs1, bls} for more details and references). The {\em dynamical degree} is an integer $d$ defined by $d= \lim_{n\cv\infty} (\deg(f^n))^{1/n}$, and  $f$ has non-trivial dynamics if and only if $d\geq 2$. It is then conjugate to a composition of generalized Hénon mappings $(x,y)\mapsto (p(x)+ay,ax)$. Here are some dynamically defined subsets:
\begin{itemize}
\item $K^\pm$ is the set of points with bounded forward orbits under $f^{\pm1}$.
\item $K= K^+\cap K^-$ is the filled Julia set.
\item $J^\pm = \fr K^\pm$ are the forward and backward Julia sets. Stable (resp. unstable) manifolds of saddle periodic points are dense in $J^+$ (resp. $J^-$).
\item $J^*\subset J = J^+\cap J^-$ is the closure of the set of saddle periodic points. Saddle points and   homoclinic and heteroclinic intersections are contained (and dense) in $J^*$. 
\end{itemize}
The Green functions $G^\pm$ are defined by $G^\pm(z) = \lim d^{-n} \log^+\norm{f^n(z)}$, and  are non-negative  continuous   plurisubharmonic functions. They  are pluriharmonic whenever positive and
$K^\pm$ coincides with  $\set{G^\pm = 0} $. 
 The associated currents are $T^\pm = dd^cG^\pm$ whose supports are  $J^\pm$. 
 If $\Delta\subset \cd$ is a  holomorphic disk, then $G^+\rest{\Delta}$ is harmonic iff $T^+\wedge [\Delta] = 0$ iff
  $\lrpar{f^n\rest{\Delta}}_{n\geq 1}$ is a normal family (equivalently $\Delta\subset K^+$ or $\Delta\subset \cd\setminus K^+$). 
 
 \medskip
 
 Let now $(f_\la)_{\la\in \La}$ be a holomorphic family of polynomial automorphisms with fixed dynamical degree $d\geq 2$, parameterized 
 by a connected complex manifold.  We will use the notation $K_\la$, $J^*_\la$, etc. to denote the corresponding dynamical objects. If a preferred parameter $\lo$ is given  
 we often simply use the subscript `0' instead of $\lo$.   
 
 To  the  family $(f_\la)$ is associated a fibered dynamical system in $\La\times \cd$ defined by $\widehat f:(\la, z)\mapsto (\la, f_\la(z))$. Then we mark with a hat the corresponding fibered objets, e.g. $\widehat K = \bigcup_{\la\in \La} \set{\la}\times K_\la$, etc. 
 
 Such a family is always conjugate to a family of compositions of Hénon mappings \cite[Prop. 2.1]{dl}.
It follows that the  sets $K_\la$ are locally uniformly bounded in $\cd$.
 
 \medskip
 
From now on we report on some results from \cite{dl}.
 A family of polynomial automorphisms of dynamical degree $d\geq 2$ is said {\em substantial} it:
 either all its members are dissipative or for any periodic point with eigenvalues $\alpha_1$ and $\alpha_2$, 
 no relation of the form $\alpha_1^a\alpha^b_2 = c$ holds persistently in parameter space, 
 where $a$, $b$ , $c$ are complex numbers and $\abs{c} =1$. From now on, we assume without further notice that all families have constant dynamical degree $d\geq 2$ and are substantial. 
 
 A {\em branched holomorphic motion} $\mathcal G$ is a family of holomorphic graphs over $\La$ in $\La\times \cd$. 
 All branched holomorphic motions 
 considered in this paper are locally uniformly bounded, so in particular they form normal families 
 (we then say that $\mathcal G$ is {\em normal}).  A branched holomorphic motion $
 \mathcal G$ is {\em unbranched} along $\gamma$ if $\gamma$ does not cross any other graph in the family. 
 If it is unbranched along any graph $\gamma$, then it is by definition a {\em holomorphic motion}. 
 If $\mathcal G$ is normal,  closed and unbranched at $\gamma$, and if $(\gamma_n)_{n\geq 0}
 \in \mathcal G^\nn$ is any sequence such that for 
 some $\lo\in \La$, $\gamma_n(\lo)\cv \gamma (\lo)$, then $\gamma_n\cv\gamma$. We thus see that unbranching along $\gamma$ is a 
 form of continuity of the motion. We can make this precise as follows: if $\mathcal G$ is a (non-necessarily closed) normal holomorphic motion and  $\overline{\mathcal{G}}$ is unbranched at $\gamma_0$, then $\mathcal{G}$ is continuous at $\gamma_0$. 
 
 \medskip
 
 A substantial family $(f_\la)_{\la\in \La}$ of polynomial automorphisms is said to be  {\em weakly stable} if every periodic point  stays of 
 constant type (attracting, saddle, indifferent, repelling) in the family. Equivalently, $(f_\la)$ is weakly stable if the sets $J^*_\la$ move under 
 an equivariant  branched holomorphic motion. A central theme in this paper will be to show that this motion is unbranched at certain points. 
 In this respect, the following result is essential.
 
 \begin{thm}[see Cor. 4.12 and Prop. 4.14 in \cite{dl}]\label{thm:unbranched}
 Let $(f_\la)_{\la\in \La}$ be a weakly stable substantial family of polynomial automorphisms of dynamical degree $d\geq 2$. If for 
 $\lo\in \La$, $p(\lo)$ is a saddle point or a homoclinic or a heteroclinic intersection, then it admits a 
 unique continuation $p(\la)$ which remains of the same type, and the branched holomorphic motion of $J^*$ is unbranched along $p$. 
\end{thm}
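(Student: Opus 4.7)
My plan is to split the statement into two parts: (i) the existence, uniqueness, and type preservation of the continuation $p(\la)$, and (ii) the unbranching of the motion of $J^*_\la$ along the graph $\widehat p=\{(\la,p(\la))\,:\,\la\in\La\}$.

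For (i), in the saddle case I would invoke the implicit function theorem applied to the equation $f_\la^N(z)=z$, where $N$ is the period of $p(\lo)$; this gives a local holomorphic continuation $p(\la)$ as a periodic point. Weak stability, by its very definition, prevents the eigenvalues of $Df_\la^N$ from crossing the unit circle, so $p(\la)$ persists as a saddle throughout $\La$; since $\La$ is the unit disk there is no monodromy, and the continuation is globally well defined and unique. For a homoclinic or heteroclinic intersection $q(\lo)\in W^s(p_1(\lo))\cap W^u(p_2(\lo))$, I would apply the parameterized holomorphic stable/unstable manifold theorem to produce a holomorphic family of local manifolds $W^s_\loc(p_1(\la))$ and $W^u_\loc(p_2(\la))$, then globalize via $W^{s,u}(p_i(\la))=\bigcup_n f_\la^{\mp n}(W^{s,u}_\loc(p_i(\la)))$. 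Transversality at $\lo$ (generically, by density of transverse intersections in $J^*$) persists in a neighborhood, and the holomorphic implicit function theorem delivers a unique continuation $q(\la)$.

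For (ii), let $\widehat{\mathcal G}$ denote the closed, normal, equivariant branched motion of $\widehat{J^*}$ provided by weak stability, and let $\gamma\in\widehat{\mathcal G}$ satisfy $\gamma(\lo)=p(\lo)$. I would first treat the saddle case. The key input is a parameterized holomorphic normal form for $\hat f^N$ near $\widehat p$: the substantiality assumption is precisely what rules out persistent resonances between the multipliers, allowing a Poincar\'e-type change of coordinates that straightens $\hat f^N$ in a neighborhood of $\widehat p$ into a product of a holomorphic contraction and expansion. In such coordinates, equivariance gives $\hat f^{nN}(\gamma)\in\widehat{\mathcal G}$ for all $n\in\zz$, and normality applied with $n\to+\infty$ (expansion along the unstable direction) together with $n\to-\infty$ (expansion along the stable direction, after inverting) forces $\gamma$ to be trapped simultaneously in the stable and the unstable manifolds of $\widehat p$. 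Since these are one-dimensional and their intersection near $\widehat p$ reduces to $\widehat p$ itself, we conclude $\gamma=p$ locally near $\lo$, hence on all of $\La$ by analytic continuation.

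For the homo- or heteroclinic case, I would reduce to the saddle case by iteration. Since $f_\lo^n(q(\lo))\to p_1(\lo)$ as $n\to+\infty$, the graphs $\hat f^n(\gamma)\in\widehat{\mathcal G}$ accumulate near $\widehat{p_1}$; the saddle argument applied there forces $\hat f^n(\gamma)$ to stay on $W^s(p_1(\la))$, and pulling back by $\hat f^{-n}$ pins $\gamma$ onto $W^s(p_1(\la))$. The backward analogue at $p_2$ pins $\gamma$ onto $W^u(p_2(\la))$, and the transversality of these two curves at $\lo$ singles out $q(\la)$ via Hurwitz. The main obstacle is the saddle case in (ii): ruling out an extra branch $\gamma\neq p$ crossing $\widehat p$ requires both the parameterized holomorphic normalization (where substantiality is essential) and the normality of $\widehat{\mathcal G}$, the former providing the local product structure and the latter upgrading it into a global rigidity statement about $\gamma$.
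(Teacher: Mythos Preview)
First, a framing remark: the paper does not give its own proof of this theorem; it is quoted from \cite{dl} (Cor.~4.12 and Prop.~4.14 there), so there is no in-paper argument to compare against. I will therefore comment on the soundness of your sketch.

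Your overall architecture---treat saddles first, then reduce (hetero)clinic points to saddles via iteration and Hurwitz---matches what is done in \cite{dl}. However, two of your steps do not go through as written. The first is the role you assign to substantiality. The non-resonance condition in the definition of a substantial family concerns persistent relations $\alpha_1^a\alpha_2^b=c$ with $|c|=1$; this is \emph{not} the Poincar\'e resonance condition $\alpha_i=\alpha_1^m\alpha_2^n$ needed for holomorphic linearization, and in the dissipative branch of the definition no non-resonance is asserted at all. Fortunately a full linearization is unnecessary: the parametrized holomorphic stable/unstable manifold theorem already gives you smooth fibered manifolds $\widehat W^{s/u}_\loc(\widehat p)$ with $\widehat W^{s}_\loc\cap\widehat W^{u}_\loc=\widehat p$ near $(\lo,p(\lo))$, with no resonance hypothesis.

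The second and more serious gap is the assertion that ``normality forces $\gamma$ to be trapped in $\widehat W^{s}_\loc\cap\widehat W^{u}_\loc$''. Knowing that $\gamma(\la)\in K_\la$ only says the full orbit $\{f_\la^{n}(\gamma(\la))\}_{n\in\zz}$ is bounded; it does \emph{not} say this orbit remains in a fixed neighborhood of $p(\la)$, which is what membership in $W^{s}_\loc(p(\la))\cap W^{u}_\loc(p(\la))$ would require. A point of $K_\la$ close to $p(\la)$ can perfectly well have its forward orbit leave a neighborhood of $p(\la)$ and wander (boundedly) through $K_\la$, so boundedness alone does not pin it to the local stable manifold. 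Extracting normal limits of $\widehat f^{kN}(\gamma)$ only produces further graphs $\gamma'$ through $(\lo,p(\lo))$ contained in $\widehat K$, which is exactly the situation you started from; the argument does not close. In \cite{dl} this step uses additional structure of the branched motion (it is the closure of the motion of saddle periodic points, and one exploits the resulting local expansivity/product structure near $\widehat p$), not bare normality. Finally, your treatment of (hetero)clinic intersections presupposes transversality at $\lo$; the statement covers tangential intersections as well, and their persistence (with preserved tangency order) is itself one of the nontrivial outputs of the analysis in \cite{dl}.
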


The motion of $J^*$ can be extended to a branched holomorphic motion of $J^+\cup J^-$, using the density of  
  stable and unstable manifolds of saddles.  The details are as follows (for concreteness we deal with 
   unstable manifolds, of course analogous results hold in the stable direction). 
  The global unstable manifold of a saddle point is parameterized by $\cc$. 
More precisely, in our situation, for every $\la$ there exists an injective holomorphic  immersion $\psi^u_\la:\cc\cv\cd$ such that 
$\psi^u_\la(0) = p_\la$ and for $\zeta \in \cc$, $f_\la\circ \psi^u_\la (\zeta)= \psi^u(u_\la \zeta)$, where $u_\la$ denotes the unstable multiplier. Such a $\psi^u$ is unique up to pre-composition with a linear map, and will be referred to as an {\em unstable parameterization}. In addition, the normalization of 
$\psi^u_\la$ may be chosen so that  $(\la,\zeta)\mapsto \psi^u_\la(\zeta)$ is holomorphic. 
The precise way to do it is irrelevant for the moment; we shall have to  discuss this issue more carefully later on. 

Thanks  to these parameterizations, we can use the theory of holomorphic motions in $\cc$ to derive information about the motion of unstable manifolds in $\cd$. 
The following is a combination of Proposition 5.2 and Lemma 5.10 in \cite{dl}.

\begin{prop}
Under the above hypotheses there exists a natural  equivariant holomorphic motion  
 $h_\la: W^u(p_0) \cv W^u(p_\la)$, with $h_0 = \mathrm{id}$, 
 that   respects the decomposition    $$W^u(p)=(W^u(p)\cap U^+) \sqcup (W^u(p)\cap K^+).$$
 \end{prop}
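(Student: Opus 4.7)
The plan is to construct $h_\la$ directly from the holomorphic family of unstable parameterizations. Set
\[
h_\la(\psi^u_0(\zeta)) := \psi^u_\la(\zeta), \qquad \zeta\in\cc.
\]
Since $\psi^u_0$ is a bijection $\cc\to W^u(p_0)$, this defines $h_\la$ on all of $W^u(p_0)$. The properties of a holomorphic motion with $h_0=\mathrm{id}$ follow immediately: $h_\la$ is injective because $\psi^u_\la$ is; $\la\mapsto h_\la(x)$ is holomorphic since $(\la,\zeta)\mapsto\psi^u_\la(\zeta)$ is holomorphic; and the graphs $\la\mapsto(\la,\psi^u_\la(\zeta))$ are pairwise disjoint in $\La\times\cd$, making this a genuine (non-branched) holomorphic motion. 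Equivariance is to be understood as $\widehat{f}$-invariance of the family of graphs, a direct consequence of the intertwining relation $f_\la\circ\psi^u_\la(\zeta)=\psi^u_\la(u_\la\zeta)$.

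The substantive content is the preservation of the decomposition. Writing $Z_\la:=(\psi^u_\la)^{-1}(K^+_\la)\subset\cc$, the claim reduces to $Z_\la=Z_0$ for every $\la\in\La$.

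I would prove this by exploiting the density of homoclinic intersections together with the uniqueness of continuations from Theorem \ref{thm:unbranched}. Under weak stability no sinks appear, so one checks that $W^u(p_0)\cap K^+_0\subset J^*_0$, and the homoclinic intersections $W^u(p_0)\cap W^s(p_0)$ are dense therein. For each such homoclinic point $q=\psi^u_0(\zeta_q)$, Theorem \ref{thm:unbranched} provides a unique $\widehat{J^*}$-continuation $\la\mapsto q_\la$, which remains a homoclinic intersection of $p_\la$ and hence belongs to $K^+_\la\cap W^u(p_\la)$. I would then identify $q_\la=\psi^u_\la(\zeta_q)$: both $\la\mapsto q_\la$ and $\la\mapsto\psi^u_\la(\zeta_q)$ are holomorphic graphs in $\widehat{W^u(p)}$ passing through $(0,q)$, and once one checks that the candidate $\la\mapsto\psi^u_\la(\zeta_q)$ itself takes values in $\widehat{J^*}$, the uniqueness assertion of Theorem \ref{thm:unbranched} forces the two curves to coincide. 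This yields $\zeta_q\in Z_\la$; by density of the homoclinic preimages in $Z_0$ and closedness of $Z_\la$ we conclude $Z_0\subset Z_\la$, and the reverse inclusion follows by symmetry, since weak stability holds along any arc from $0$ to $\la$ in $\La$.

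The principal obstacle is verifying that the candidate graph $\la\mapsto\psi^u_\la(\zeta_q)$ lies in $\widehat{J^*}$, not merely in $\widehat{W^u(p)}$. I would propagate this from the trivial anchor $\la\mapsto p_\la=\psi^u_\la(0)\in J^*_\la$ through its $\widehat{f}$-orbit, and then reach arbitrary homoclinic $\zeta_q$ by a normality and Hurwitz-type argument applied to the uniformly bounded holomorphic family $\set{\psi^u_\la}$ on compact subsets of $\cc$, using that $\widehat{J^*}$ is closed. This is precisely the interplay between the branched motion of $J^*$ and the global unstable parameterization that weak stability makes available.
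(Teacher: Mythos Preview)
Your construction $h_\la(\psi^u_0(\zeta)):=\psi^u_\la(\zeta)$ is a perfectly good holomorphic motion of $W^u(p_0)$, but it is \emph{not} the one the paper builds, and for good reason: there is no ground for the claim $Z_\la=Z_0$.

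The gap is precisely where you locate it, and your proposed repair does not close it. To invoke unbranching at $q_\la$ (Theorem~\ref{thm:unbranched}) you must already know that the candidate graph $\la\mapsto\psi^u_\la(\zeta_q)$ stays in $\widehat K$, which is exactly the assertion $\zeta_q\in Z_\la$ you are trying to prove. The ``anchor'' idea yields nothing: in the linearizing coordinate the dynamics on $W^u(p)$ is $\zeta\mapsto u_\la\zeta$, so the $\widehat f$-orbit of $\zeta=0$ is $\{0\}$; you obtain no new $\zeta$-values known to lie in every $Z_\la$. A Hurwitz/normality argument only propagates the property from a set of $\zeta$'s where it is already established to its closure, and your established set is the single point $\{0\}$. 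In fact the opposite is what happens in general: for each homoclinic point $q$ the map $\la\mapsto(\psi^u_\la)^{-1}(q_\la)$ is a genuine, typically nonconstant, holomorphic motion in $\cc$ (this is exactly the motion $h^u_\la$ the paper introduces a few lines later). A single affine normalization of $\psi^u_\la$ can freeze at most one homoclinic $\zeta$-coordinate; it cannot freeze them all.

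The paper's route is different: one first uses Theorem~\ref{thm:unbranched} to obtain the holomorphic motion of the homoclinic set inside the $\zeta$-line, and then takes its canonical Bers--Royden extension to all of $\cc$. By construction this extended motion carries $Z_0$ to $Z_\la$ (homoclinic points are dense there and stay in $K^+$) and the complement to the complement; pushing forward by $\psi^u_\la$ gives $h_\la$. In short, the decomposition is preserved because the motion is \emph{defined} from the motion of $K^+\cap W^u(p)$, not because a fixed linearizing coordinate happens to respect $K^+$.
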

 
 Beware that we are not claiming that  the points in $W^u(p)$  have a unique continuation, only that they have a {\em natural} one. 
This motion is constructed by taking the canonical  extension of the motion of homoclinic intersections 
(this is due to Bers and Royden \cite{bers royden}).   

Notice that the notation 
$h_\la$ here refers to the  motion of points in $\cd$. Given a holomorphic family of parameterizations $\psi_\la^u$ of
$W^u(p_\la)$, it will also be of interest in some situations  to consider the corresponding holomorphic motion 
$(\psi^u_\la)^{-1}\circ h_\la$ in $\cc$, which we will   denote by $h_\la^u$. 

\medskip

These holomorphic motions need not preserve the levels of the function $G^+$: indeed this is already the 
case\footnote{For instance  the value of the Green function at
 critical points is in general not invariant in a $J$-stable family 
of polynomials.} for 
$J$-stable families in dimension 1. 
The following  easy lemma asserts that   $G^+$ admits  locally uniform distortion  
 along the motion. It thus   provides a    link between the intrinsic (i.e. inside unstable manifolds) and the extrinsic 
 properties of the motion, and   will play an important role in the paper. 

\begin{lem}\label{lem:harnack}
For every compact subset $\widetilde \La\Subset \La$ 
there exists a constant $C = C(\widetilde\La) \geq 1$  such that for every 
 $z \in W^u(p_0)$
$$ \unsur{C} G^+_0(z) \leq  G^+_\la(h_\la(z))\leq  {C}  G^+_0(z). $$
\end{lem}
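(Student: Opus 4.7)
The plan is to observe that, for each fixed $z\in W^u(p_0)$, the function $u_z\colon \la \mapsto G^+_\la(h_\la(z))$ is non-negative, continuous, subharmonic on $\La$, and harmonic wherever it is positive; the lemma then follows at once from Harnack's inequality on the disk $\La=\dd$.

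To see the claimed properties of $u_z$, I would first note that the fibered Green function $\widehat{G}^+(\la,w):=G^+_\la(w)$ is plurisubharmonic on $\La\times \cd$ and pluriharmonic on the open set $\{\widehat{G}^+>0\}$; this is routine, say via a joint B\"ottcher coordinate giving locally $\widehat{G}^+=d^{-n}\log|F|$ with $F$ holomorphic in $(\la,w)$, or more directly as the locally uniform limit of the PSH functions $d^{-n}\log^+\norm{f_\la^n(w)}$ on $\La\times\cd$. Composing with the holomorphic section $\la\mapsto(\la,h_\la(z))$, whose holomorphy in $\la$ is part of the definition of the motion, transports these properties to $u_z$.

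Now split into cases according to whether $z$ lies in $K^+_0$. If $z\in W^u(p_0)\cap K^+_0$, the preceding proposition, asserting that $h_\la$ respects the decomposition $(W^u(p)\cap U^+)\sqcup(W^u(p)\cap K^+)$, gives $h_\la(z)\in K^+_\la$ for every $\la$, whence $u_z\equiv 0$ and the claimed bound is trivial. In the complementary case the same property forces $u_z(\la)>0$ throughout $\La$; combined with harmonicity wherever positive, this makes $u_z$ a strictly positive harmonic function on the whole disk $\dd$, so Harnack's inequality yields $\tfrac{1-r}{1+r}u_z(0)\leq u_z(\la)\leq \tfrac{1+r}{1-r}u_z(0)$ for $|\la|\leq r<1$, which is the announced bound with $C=(1+r)/(1-r)$ once $r<1$ is chosen with $\widetilde{\La}\subset\overline{B(0,r)}$. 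No step is really delicate; the whole argument rests on the motion's respecting the $U^+/K^+$ partition of $W^u(p_0)$, which is precisely what ensures $u_z$ is either identically zero or strictly positive on all of $\dd$.
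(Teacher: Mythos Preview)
Your proof is correct and follows essentially the same route as the paper: split into the cases $z\in K^+_0$ and $z\notin K^+_0$, use that the motion respects the $K^+/U^+$ decomposition, and apply Harnack's inequality to the positive harmonic function $\la\mapsto G^+_\la(h_\la(z))$ in the second case. Your write-up is slightly more detailed (explicit Harnack constant, justification of the PSH/pluriharmonic properties of $\widehat G^+$), but the argument is the same.
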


 \begin{proof} 
 Recall that for any holomorphic family of polynomial automorphisms of degree $d$, the function 
 $(\la,z)\mapsto G^+_\la(z)$ is plurisubharmonic in $\La\times \cd$, jointly continuous in $(\la,z)$, and pluriharmonic where it is positive (see \cite[\S3]{bs1}).

 If $z\in K^+_0$, then $h_\la(z)\in K^+_\la$ and $G^+_\la(h_\la(z))\equiv 0$ 
 so there is nothing to prove. If $z\notin K^+_0$ then 
 $\la\mapsto G^+_\la(h_\la(z))$ is a positive harmonic function, so the result follows from the Harnack inequality 
 \cite[Thm 3.1.7]{hormander}.
 \end{proof}

\subsection{Sequences of analytic subsets}\label{subs:prel geom}
 To make the paper accessible to readers potentially not  so familiar with complex geometry, let us first  recall a few 
  classical facts on complex analytic sets, and sequences  of such objects.
The reader is referred to the book of Chirka \cite{chirka} for more details. 

\medskip

Let $\om\subset \cc^d$ be a connected open set.
A (complex) \emph{analytic subset} or \emph{subvariety}  $A$ of $\om$ is a subset of $\om$ that is 
 covered by open sets $U$ of $\mathbb C^n$, for which there exist
   $p\ge 0$ and a holomorphic map $\phi \colon  U\mapsto \mathbb C^p$, such that
$A\cap U= \{z\in U,\; \phi(z)=0\}$.
  
A point $a\in A$ is \emph{regular} if there exists a neighborhood $U$ of $a$ so that $A\cap U$ is a (complex) submanifold. The set of regular points of $A$ is denoted by $\mathrm{Reg}(A)$, and its complement 
 $\mathrm{Sing}(A)= A\setminus\mathrm{Reg} (A)$ is  the  \emph{singular set}. 
 A subvariety is \emph{smooth} if its singular set is empty. 

 An \emph{irreducible component}  of $A$ is the closure of a connected component of ${\mathrm Reg}(A)$. 
It is itself an analytic set. The \emph{dimension} of an analytic subset
 $A$ is the maximal dimension of its irreducible components. It is said of {\em pure dimension} if all its irreducible components
 have the same dimension.
 
 A {\em hypersurface} (resp. a {\em curve}) is an analytic subset of pure codimension (resp. dimension) 1, possibly singular.  

\medskip

We recall that the Hausdorff distance $d_{HD}$ between two closed subsets $E$ and $F$ of  a metric space is infimum of $m\in [0,\infty]$ 
such that $E$ is included in the $m$-neighborhood $F$ and {\it vice-versa}. Let $\Omega$ be an 
open subset of $\mathbb C^d$.  A sequence of closed subsets $(A_j)_j$ of $\Omega$ converges to a closed subset $A\subset \Omega$, if 
for every compact set $K$ of $\Omega$, it holds $d_{HD}(K\cap A_j, A\cap K)\to 0$. The set of closed (resp. closed and connected) 
subsets of $\Omega$ endowed with 
Hausdorff distance is relatively compact.  

A key ingredient to study the convergence of analytic subsets sequences is the following classical result known as Bishop's Theorem:

\begin{thm}[see \cite{chirka} p. 203]\label{thm:bishop}
Let $(A_j)_j$ be a sequence of pure $p$-dimensional subvarieties  of 
 an open subset $\Omega\subset \mathbb C^d$,  converging to  a (closed) subset $A \subset \Omega$ and such that the $2p$-dimensional Hausdorff measure (that is, the $2p$-dimensional volume) 
$m_{2p}(A_j)$  is locally uniformly bounded:
\[\forall K\Subset \Omega,\;  \exists M_K>0,\;    \forall j, \; m_{2p}(A_j\cap K)<M .\]
Then $A$ is also a pure $p$-dimensional subvariety of $\Omega$.
\end{thm}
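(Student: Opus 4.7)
The plan is to show that each point of $A$ admits a neighborhood on which $A$ coincides with an analytic subset. Fix $a_0\in A$ and, after translation, assume $a_0=0$. The strategy is to represent each $A_j$ locally as a branched cover over a linear $p$-plane with uniformly bounded degree, extract a convergent subsequence of Weierstrass polynomials describing these covers, and identify the zero set of the limit polynomials with $A$.

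First I would choose a generic orthogonal decomposition $\mathbb{C}^d=\mathbb{C}^p\oplus\mathbb{C}^{d-p}$, denote by $\pi$ the projection onto the first factor, and take a product polydisc $\Delta=\Delta'\times\Delta''$ centered at $0$. Genericity is selected so that $A\cap(\{0\}\times\overline{\Delta''})=\{0\}$; by the Hausdorff convergence $A_j\to A$, this forces $A_j\cap(\overline{\Delta'}\times\partial\Delta'')$ to stay away from $\{0\}\times\partial\Delta''$ for all large $j$, after which $\pi\rest{A_j\cap\Delta}\colon A_j\cap\Delta\to\Delta'$ is proper. Wirtinger's formula $m_{2p}(A_j\cap\Delta)=\int_{A_j\cap\Delta}\omega^p/p!$, with $\omega$ the standard K\"ahler form, combined with the hypothesis, gives a uniform bound $N$ on the generic cardinality of the fibers of $\pi\rest{A_j\cap\Delta}$, i.e.\ on the degree of this branched cover.

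Next I would invoke the standard local parameterization to encode each $A_j\cap\Delta$ as a common zero locus of Weierstrass polynomials: for each coordinate $z''_k$ on $\mathbb{C}^{d-p}$, set
\[
P^{(k)}_j(z';w)=\prod_{\alpha=1}^{N}\bigl(w-z''_{k,\alpha}(z')\bigr),
\]
where $(z'_{k,\alpha}(z'))_\alpha$ lists, with multiplicity, the $z''_k$-coordinates of the sheets of $A_j$ above $z'$. The elementary symmetric functions of the sheets are holomorphic in $z'$ and uniformly bounded by boundedness of $\Delta''$. Extracting a subsequence, Montel's theorem yields holomorphic limits $P^{(k)}(z';w)$ whose common zero locus defines an analytic subset $\widetilde A\subset\Delta$.

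Finally I would verify the two inclusions $A\cap\Delta=\widetilde A$. The inclusion $A\cap\Delta\subseteq\widetilde A$ is immediate: any $a\in A\cap\Delta$ is a Hausdorff limit of points $a_j\in A_j\cap\Delta$, whence $P^{(k)}(a)=\lim P^{(k)}_j(a_j)=0$. Conversely, any point of $\widetilde A$ over $z'\in\Delta'$ is a limit of roots of $P^{(k)}_j(z';\cdot)$, hence of points of $A_j$, hence lies in $A$. Pure $p$-dimensionality then follows because each irreducible component of $\widetilde A$ meeting $\Delta$ must dominate an open subset of $\Delta'$ via $\pi$: otherwise it would project to a proper subvariety of $\Delta'$, and the generic fibers of $\pi\rest{A_j\cap\Delta}$ would accumulate nowhere on it, contradicting the construction. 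I expect the main technical obstacle to be controlling the projection uniformly in $j$: one must rule out sheets of $A_j$ escaping towards $\partial\Delta''$ as $j\to\infty$, and it is precisely here that the Hausdorff convergence hypothesis, beyond the mere volume bound, is used.
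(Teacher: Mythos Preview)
The paper does not prove this statement: Theorem~\ref{thm:bishop} is quoted as a classical result and attributed to Chirka \cite[p.~203]{chirka} without proof. So there is no ``paper's own proof'' to compare against; your sketch is essentially an outline of the standard argument found in Chirka's book.

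That said, your outline has a genuine gap in the inclusion $\widetilde A\subseteq A$. You define $\widetilde A$ as the common zero set of the $d-p$ Weierstrass polynomials $P^{(k)}$, one for each coordinate $z''_k$ of $\mathbb{C}^{d-p}$. But a point $(z',z'')$ satisfying $P^{(k)}(z';z''_k)=0$ for every $k$ need not be a limit of points of $A_j$: each $z''_k$ is a limit of the $k$-th coordinate of \emph{some} point of $A_j$ over $z'$, but these may be different points for different $k$. Concretely, if $A_j$ has two sheets $(a_1,a_2)$ and $(b_1,b_2)$ over $z'$, then $(a_1,b_2)$ also lies in your $\widetilde A$ though it is not on $A_j$. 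The standard remedy (as in Chirka) is to take Weierstrass polynomials not just for the coordinate functionals but for a sufficiently rich family of linear forms $\ell$ on $\mathbb{C}^{d-p}$ (or equivalently to use the canonical defining functions/Chow form of the fiber), so that the common zero locus picks out exactly the fiber points and not their coordinate-wise ``shuffles.''

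A second, smaller point: the sentence ``Genericity is selected so that $A\cap(\{0\}\times\overline{\Delta''})=\{0\}$'' presupposes control on $A$ that you do not yet have. The honest route is to choose the projection generically with respect to the $A_j$: the volume bound together with Wirtinger/Crofton gives, for almost every $(d{-}p)$-plane $L$ through $0$, a uniform bound on $\#(A_j\cap L\cap B)$; Hausdorff convergence then transfers this finiteness to $A$. You allude to this difficulty in your final paragraph, but it deserves to be the mechanism by which the coordinates are chosen, not a consequence of a property of $A$ asserted in advance.
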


In particular the set of subvarieties with locally uniformly bounded volume is compact. 
We can actually be more precise about the convergence  in Bishop's Theorem. 
Let $A_n$ be a sequence of analytic sets with uniformly bounded volumes converging in the 
Hausdorff topology to an irreducible analytic set $A$. Then there exists a positive integer $m$, the 
 {\em multiplicity of convergence}, which can be described as follows.
 If $p\in \mathrm{Reg}(A)$ is any regular point of $A$, and $N$ is a compact 
neighborhood of $p$ in which $(A\cap N, N)$ is biholomorphic to $(\dd^k\times \set{0}, \dd^d)$, then for $n$ large enough, $A_n
\cap N$ is a branched cover over  $A\cap N$ of degree $m$. In particular if $m=1$, $A_n
\cap N$ is a graph over  $A\cap N$

\medskip

Let us  now state a few results which will be used many times in the paper. The following 
 result can be interpreted as a kind of abstract version of the $\La$-lemma of \cite{mss}. 

\begin{prop}\label{prop:cv} 
Let $\om\subset \cc^d$ be a connected open set. 
Let $(V_n)$ be a sequence of analytic subsets of codimension 1 in $\om$ with uniformly bounded volumes. 

Assume that:
\begin{itemize}
\itm the $V_n$ are disjoint;
\itm there exists $p_n\in V_n$ such that $p_n\cv p\in \om$;
\itm every cluster value of $(V_n)$ is locally irreducible at $p$. 
\end{itemize}
Then the sequence $(V_n)$ converges. 
\end{prop}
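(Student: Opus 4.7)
The plan is to combine Bishop's theorem with the classical persistence of proper intersection: Bishop furnishes compactness of the space of subsequential Hausdorff limits, and the disjointness of the $V_n$ combined with the local irreducibility at $p$ forces all such cluster values to induce the same irreducible germ at $p$, which in turn yields the desired convergence.

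First I would apply Theorem~\ref{thm:bishop}: the uniform volume bound ensures that any subsequence of $(V_n)$ admits a further subsequence converging in the Hausdorff topology to some pure codimension $1$ analytic subset $V\subset\om$. Since $p_n\in V_n$ and $p_n\cv p$, any such $V$ contains $p$, and by hypothesis the germ of $V$ at $p$ is an irreducible hypersurface germ.

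The heart of the argument is to show that any two cluster values $V$ and $V'$ define the same germ at $p$. Argue by contradiction: if these germs were distinct and irreducible, then on a small ball $B$ around $p$ we could write $V\cap B=\{f=0\}$ and $V'\cap B=\{g=0\}$ with $f,g\in\mathcal O(B)$ coprime, so that $V\cap V'\cap B$ would be a proper (pure codimension $2$) analytic subset containing $p$. Extract subsequences $V_{n_k}\cv V$ and $V_{m_j}\cv V'$ with disjoint index sets, so that $V_{n_k}\cap V_{m_j}=\emptyset$ for every $k,j$ by hypothesis. By the description of the multiplicity of convergence recalled after Bishop's theorem, for $k$ large $V_{n_k}\cap B$ is a branched cover of $V\cap B$ of some fixed degree $m$; after Weierstrass preparation, its defining function on a smaller polydisk converges, up to a non-vanishing unit, to $f^m$, and analogously $V_{m_j}\cap B$ is cut out by a function $g_j$ converging to a unit times $g^{m'}$.

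The main obstacle is to derive a contradiction from $V_{n_k}\cap V_{m_j}=\emptyset$; this is where persistence of proper intersection enters. Since $\{f=0\}$ and $\{g=0\}$ meet properly at $p$, a standard Rouché-type argument (equivalently, the stability of the intersection number on a small polydisk around $p$) shows that $V_{n_k}\cap V_{m_j}\cap B$ is non-empty for all $k,j$ sufficiently large, with these intersection points accumulating on $V\cap V'\cap B\ni p$. This contradicts the disjointness hypothesis. Therefore all cluster values share the same irreducible germ at $p$ and, by irreducibility, coincide in a sufficiently small neighborhood of $p$; combined with the compactness furnished by Bishop's theorem (every subsequence has a convergent subsubsequence, and all accumulation points agree), this yields that $(V_n)$ converges.
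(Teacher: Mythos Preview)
Your proof is correct and follows essentially the same approach as the paper's: assume two distinct cluster values, use local irreducibility at $p$ to force a proper (codimension $2$) intersection there, then invoke persistence of proper intersections to contradict the disjointness of the $V_n$. The paper's version is more terse---it simply cites the stability of proper intersections from Chirka rather than unpacking Weierstrass preparation, defining functions, or the multiplicity of convergence---so your digression through branched covers and converging defining functions is unnecessary, though not wrong.
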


The irreducibility assumption is necessary in this result, as shown by the sequence of curves in $\cd$ 
defined by $V_{2n} = \set{x=0}$ and $V_{2n+1} = \set{xy = 1/n}$. 

\begin{proof} Assume that $V = \lim V_{n_j}$ and $W = \lim V_{n'_k}$ are distinct cluster limits of $(V_n)$. Then $V$ and $W$ are irreducible and contain $p$, therefore they must intersect non-trivially at $p$. Since $V$ and $W$ are of codimension 1, they intersect properly, that is 
$\dim(V\cap W)=d-2$. Now, proper intersections are robust under perturbations (see prop. 2 p. 141 and  cor. 4 p. 145 in 
\cite{chirka}), so we infer that $V_{n_j}$ and $ V_{n'_k}$ intersect non-trivially  for large $j$ and $k$, which is contradictory. 
\end{proof}

\medskip

In general the limit of a sequence of smooth hypersurfaces can be singular. The smoothness of the limit can be ensured in 
certain circumstances (compare \cite[Prop. 11]{lyubich peters}). 

\begin{prop}\label{prop:disk}
Let $(V_n)$ be a sequence of curves with uniformly bounded area in the unit ball of $\cd$, which converges   to $V$. Assume that for every $n$, $V_n$ is biholomorphic to a disk. Then $V$ is irreducible. If in addition the multiplicity of convergence is 1, then $V$ is 
 smooth.
\end{prop}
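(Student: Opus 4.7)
My strategy is to parametrize each $V_n$ by a biholomorphism $\phi_n : \dd \to V_n$. Since $V_n \subset \bb$ uniformly, the family $(\phi_n)$ is normal in $\dd$ by Montel's theorem. The Hurwitz-type dichotomy for sequences of injective holomorphic maps $\dd \to \cc^d$ (each $\phi_n$ is an embedding) says that any locally uniform limit $\phi = \lim \phi_{n_k}$ is either injective or constant; when non-constant, $\phi(\dd) \subset V$ is a smooth immersed disk, hence contained in a single irreducible component of $V$ by the openness of the regular locus and the connectedness of $\dd$.

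For irreducibility, I would argue by contradiction. Suppose $V = V_1 \cup V_2$ with distinct irreducible components. Since the Hausdorff limit of the connected sets $V_n$ is connected, so is $V$, and therefore $V_1 \cap V_2 \neq \emptyset$. I pick regular points $q_i \in V_i \setminus V_{3-i}$ and sequences $q_n^{(i)} \in V_n$ with $q_n^{(i)} \to q_i$, then pre-compose $\phi_n$ with the M\"obius automorphism $M_n^{(i)}$ of $\dd$ sending $0$ to $\phi_n^{-1}(q_n^{(i)})$. A lower bound on the derivative of the re-centered maps at $0$ is obtained from the fact that the Lelong mass of $[V_n]$ near $q_i$ is bounded below (since $[V_n] \to T = \sum m_j [V_j]$ with $m_j \ge 1$) combined with Schwarz--Pick; this rules out the constant-limit case and yields an injective limit $\phi^{(i)}$ with $\phi^{(i)}(\dd) \subset V_i$. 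The two reparametrizations of the same $V_n$ differ by M\"obius automorphisms $M_n := M_n^{(2)} \circ (M_n^{(1)})^{-1}$; landing in different components forces $d_\dd(0,M_n(0)) \to \infty$ in the Poincar\'e metric. On the other hand $q_n^{(1)}, q_n^{(2)} \in V_n \subset \bb$ stay at uniformly bounded Euclidean distance, so by the distance-decreasing inclusion $V_n \hookrightarrow \bb$ for the Kobayashi metric their intrinsic Poincar\'e distance in $V_n \cong \dd$ is uniformly bounded, contradicting the divergence above.

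For the smoothness claim under the multiplicity-$1$ hypothesis, assume now $V$ is irreducible and suppose for contradiction that $V$ has a singular point $p$. Pick a generic linear projection $\pi : \cd \to \cc$ and a small polydisk $P \ni p$ so that $\pi : V \cap P \to \Delta_r$ is a $k$-sheeted branched cover with $k = \mathrm{mult}_p(V) \ge 2$. By Bishop's theorem and slicing stability, $\pi : V_n \cap P \to \Delta_r$ is also a $k$-sheeted branched cover for $n$ large. The multiplicity $1$ hypothesis identifies $V_n \cap N$ with $V \cap N$ as a graph for any compact neighborhood $N$ of any regular point of $V$; propagating this identification sheet-by-sheet over the annulus $\Delta_r \setminus \Delta_{r/2}$, the monodromy of $\pi|_{V_n}$ around $\pi(p)$ matches that of $\pi|_V$. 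Simple connectedness of $V_n$ together with Riemann--Hurwitz forces all branching of $\pi|_{V_n \cap P}$ to concentrate at a single point converging to $p$. Passing to the limit, the graph relation extends across $p$, producing a local biholomorphism between a neighborhood of $p$ in $V_n$ and a neighborhood of $p$ in $V$ for $n$ large. Since $V_n$ is smooth, this contradicts the singularity of $V$ at $p$.

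\textbf{Main obstacle.} The delicate step is the re-centering of $\phi_n$ in the irreducibility argument: converting the geometric data (Hausdorff convergence plus positive Lelong mass on each component) into uniform analytic lower bounds on $|\phi_n'|$ at prescribed base points requires care, since the area of $V_n$ could a priori concentrate near $\partial \dd$. Once this is handled, the contradictions in both parts are geometric and rely on the dual control between the intrinsic Poincar\'e metric on $V_n \cong \dd$ and the extrinsic Euclidean geometry of $V_n \subset \bb$.
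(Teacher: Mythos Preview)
Your irreducibility argument contains a genuine error: the Kobayashi inequality goes the wrong way. For the holomorphic inclusion $\iota: V_n \hookrightarrow \bb$ one has
\[
d^{\mathrm{Kob}}_{\bb}\bigl(q_n^{(1)}, q_n^{(2)}\bigr) \;\le\; d^{\mathrm{Kob}}_{V_n}\bigl(q_n^{(1)}, q_n^{(2)}\bigr),
\]
not the reverse. Thus a uniform bound on the Euclidean (or Kobayashi-in-$\bb$) distance between $q_n^{(1)}$ and $q_n^{(2)}$ gives only a \emph{lower} bound on their intrinsic Poincar\'e distance in $V_n\cong\dd$, and says nothing against $d_\dd\bigl(0,M_n(0)\bigr)\to\infty$. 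There is no general mechanism preventing a sequence of embedded holomorphic disks in $\bb$ from developing a ``thin neck'' in the intrinsic metric while remaining bounded extrinsically; that is precisely the phenomenon one must rule out, and the distance-decreasing property of Kobayashi metrics cannot do it. Consequently the contradiction you set up between ``Poincar\'e distance bounded'' and ``Poincar\'e distance divergent'' never materializes.

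Your smoothness argument also has gaps. First, Riemann--Hurwitz for a $k$-sheeted branched cover $\dd\to\Delta_r$ gives total branching order $k-1$, but does \emph{not} force the branching to occur at a single point; $k-1$ simple branch points are perfectly consistent with the Euler characteristic count and with the boundary monodromy being a $k$-cycle. Second, even if you produced a biholomorphism between neighborhoods of $p$ in $V_n$ and in $V$, this would only say the \emph{normalizations} agree; it does not contradict an embedded singularity of $V$ at $p$ (a cusp, for instance, is intrinsically a smooth disk).

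The paper's proof avoids both difficulties by working locally and topologically rather than metrically. For irreducibility it intersects $V$ with a small ball $B$, notes that each connected component of $V_n\cap B$ is itself a holomorphic disk (by the maximum principle applied to $z\mapsto\norm{z-p}^2$) whose boundary is a single circle, and shows this circle must converge to the boundary of a single local irreducible component of $V\cap B$; two distinct components would force proper (hence persistent) intersections between disjoint pieces of $V_n$. For smoothness under multiplicity~$1$, it invokes Milnor's result that the link $V\cap\partial B$ of an isolated singular point has positive smooth $4$-ball genus, while the smoothly embedded disk $V_n\cap B$ bounding an isotopic curve would force that genus to be zero. This last step genuinely uses that $V_n$ is a \emph{smoothly embedded} disk, not merely the biholomorphic image of one.
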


\begin{proof}
Fix $p\in V$. Let us first show that $V$ is locally irreducible at $p$. 
  Fix a small ball $B$ about $p$ and let  $V \cap B= V ^1\cup \cdots \cup V ^q$ be 
 the decomposition into (local) irreducible components. Shrinking  $B$ slightly if necessary, we may
 assume that each $V^i$ contains $p$ and that $\fr B\cap \overline{V^i}$ is not empty.
 Likewise, we may assume that $V$ is smooth near $\fr B$ and transverse to it. Hence 
 $V\cap \fr B$ is a union of disjoint smooth (real) curves $(C_j)_j$ and each $C_j$ is contained in a unique irreducible component $V^i$.

For every $n$, let $V_n^i$ be a connected component of $V_n\cap B$.  By the uniform bound on the area and Bishop Theorem, we can 
extract a  subsequence $(V_{n'}^i)_{n'}$ converging  to a curve 
 $A$. Since $A$ is included in $V\cap B$, it is an union of irreducible components of $V\cap B$.

Observe  that each $V_n^i$ is a holomorphic disk, as follows from 
 the maximum principle applied to the subharmonic function $z\mapsto \norm{\varphi_n(z)- p}$, where $\varphi_n$ is a parametrization of $V_n$. The boundary $C_n^i= \overline{V_n^i}\cap\fr B$ is homeomorphic to a 
  circle for every $n$,  hence $(C_{n'}^i)_{n'}$ converges to a connected compact set.
  Our assumptions on $\fr B\cap V$   
   imply that $C_{n'}^i$ is close to a unique component
   $C^i$ of $V\cap \fr B$. On the other hand the loop $C^i$ is in the boundary of a unique irreducible 
   component of $V\cap B$, say $V^i$. Thus $A=V^i$ and $\overline A\cap \fr B= \overline{V^i}\cap \fr B=C_j$. 

For every $n'$, consider  any connected component $V_{n'}^j$ of $V_{n'}\cap B$, so that $V_{n'}^j$ converges to a certain irreducible component $V^j$ of $V$. If $V^i$ and $V^j$ are not  equal, they intersect  properly, hence 
 the same occurs for $V_{n'}^i$ and $V_{n'}^j$, a contradiction. 
 This proves that $V$ is locally irreducible and $\overline{V}\cap \fr B$ is a single loop $C_j$. 

 %
		%
   
  \medskip
  
  Now assume that the multiplicity of convergence is 1 and let us show that $V$ is smooth. Assume by contradiction that $V$ is singular at $p$. By the multiplicity 1 convergence hypothesis and the transversality of $V$ and $\fr B$, 
  for large $n$ the loop $C_{n'}^i=\overline{V_{n'}^i}\cap\fr B$ is smooth, close to  $C^i=\overline{V}\cap\fr B$ and  (smoothly) isotopic to   it.
	
	The (smooth) {\em genus} of $C^i$ is by definition the smallest genus of a smooth surface in $B$ bounded by $C^i$. It is   
	invariant under smooth isotopy. It is known that if $V$ is singular at $p$, then for a sufficiently 
	small ball $B= B(p, r)$ around $p$, the genus of $V\cap \fr B$ is positive 
   (see \cite[Cor 10.2]{milnor}). Since $V_{n'}^i$ is a holomorphic disk, 
   we arrive at a contradiction, which finishes the proof. 
\end{proof}

\section{Uniform geometry of (un)stable manifolds}\label{sec:size}

In this  section we consider a weakly stable substantial family $(f_\la)_{\la\in \La}$ of polynomial automorphisms of 
dynamical degree $d\geq 2$. 
Let $\lo\in \La$, and fix a saddle periodic  point $p_0$ for $f_0$. 
By weak stability, $p_\la$ persists  as a saddle point throughout the family. Our purpose is to give uniform  estimates on the 
geometry of $W^{s/u}_{\rm loc}(p_\la)$, depending only of that of $W^{s/u}_{\rm loc}(p_0)$.
For concreteness, from now on we deal with unstable manifolds. 
Recall that  $\La$ was assumed to be  the unit disk. 

We present two types of results, which will both be used afterwards. In \S\ref{subs:area} we show that the area of the local 
unstable manifold of $p_\la$ can be controlled throughout $\La$. The techniques here are reminiscent of the results of 
\cite[\S 3]{bs8}. We introduce a notion of size of a manifold at a point in \S\ref{subs:param1}, and show that unstable parameterizations can 
be controlled in term of the size of $W^u(p)$ at $p$. Finally in 
  \S\ref{subs:param2} we show that the size    of $W^u (p_\la)$ at $p_\la$ is uniformly bounded from below in a 
neighborhood of $\lo$ that  depends only on the size of $W^{s/u} (p_0)$ at $p_0$. 

\subsection{Areas of local unstable manifolds} \label{subs:area}
By definition we say that $D\subset \cd$ is a {\em  holomorphic disk} if there is a holomorphic map  $\phi:\dd\cv \cd$  with  $\phi(\dd) = D$,   which extends to a homeomorphism  $\overline \dd \cv \overline D$.

 In the next lemma we give a basic estimate on  the geometry of a holomorphic disk in $\cd$,  relying     
  on simple ideas from conformal geometry.
The modulus of 
an annulus, will be  denoted by    $\modul(A)$. 

\begin{lem}\label{lem:geometry}
Let $D_1 \Subset D_2$ be a pair of holomorphic disks in $\cd$ with $0\in D_1$ and 
let $d_1, d_2, m$ be positive real numbers such that 
$$ \sup_{  z\in D_2} \norm{z}\leq d_2,\ \sup_{ z\in   \fr D_1} \norm{z}\geq d_1,
 \text{ and } \modul(D_2\setminus \overline{D_1})\geq m.$$ 

Then there exist positive  constants $A$ and $r$ depending only on $d_1$, $d_2$ and $m$ such that the connected component of $D_2\cap B(0,r)$ containing 0 is a properly embedded submanifold in $B(0, r)$, of area not greater than $A$. 
\end{lem}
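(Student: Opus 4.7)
The plan is to pull the problem back to the unit disk via a conformal parameterization $\phi\colon\dd\to D_2$ that extends to a homeomorphism $\overline{\dd}\to\overline{D_2}$ with $\phi(0)=0$, and then combine the Grötzsch modulus inequality with a Cauchy estimate. Setting $U_1 := \phi\inv(D_1)$, conformal invariance of modulus gives $\modul(\dd\setminus\overline{U_1})\geq m$. Grötzsch's inequality then forces every $\zeta\in\overline{U_1}$ to satisfy $m\leq\mu(|\zeta|)$, where $\mu$ is the Grötzsch capacity function, so $\overline{U_1}\subset\overline{\dd(0,R)}$ with $R=\mu\inv(m)<1$ depending only on $m$. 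The Schwarz lemma applied coordinatewise gives $\norm{\phi(\zeta)}\leq\sqrt 2\, d_2|\zeta|$, and a Cauchy integral estimate on circles of radius $(1-R)/2$ gives $\norm{\phi'(\zeta)}\leq L := 2\sqrt 2\, d_2/(1-R)$ on $\overline{\dd(0,R)}$; thus $\phi$ is $L$-Lipschitz there.

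The next step is to choose $r=r(d_1,d_2,m)>0$ small enough that the connected component $V_r$ of $\phi\inv(B(0,r))$ through $0$ is relatively compact in $\dd(0,R)$. Once that is secured, the area formula gives
\[
\mathrm{Area}(\phi(V_r)) \;=\; \int_{V_r}\norm{\phi'}^2\,dA \;\leq\; L^2\cdot\mathrm{Area}(V_r) \;\leq\; \pi R^2 L^2 =: A,
\]
while $\overline{V_r}\Subset\dd$ forces $\partial\phi(V_r)\subset\phi(\partial V_r)\subset\partial B(0,r)$, giving the proper embedding in $B(0,r)$.

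The main obstacle is establishing the containment $V_r\Subset\dd(0,R)$: Grötzsch alone localizes $U_1$, but the sub-level set $V_r$ of the subharmonic function $u(\zeta) := \log\norm{\phi(\zeta)}$ could a priori extend outside $U_1$ through a region where $\partial D_1$ dips close to the origin. My resolution will use two ingredients. First, the Schwarz estimate $\norm{\phi(\zeta)}\leq\sqrt 2\, d_2|\zeta|^k$, where $k$ is the order of vanishing of $\phi$ at $0$, evaluated at a marked $\zeta^*\in\partial U_1$ with $\norm{\phi(\zeta^*)}\geq d_1$ and $|\zeta^*|\leq R$, yields a uniform bound $k\leq\log(\sqrt 2\, d_2/d_1)/\log(1/R)$ depending only on $d_1,d_2,m$. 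Second, writing $\phi(\zeta)=\zeta^k\psi(\zeta)$ with $\psi$ non-vanishing on $\dd$ and $\norm\psi\leq\sqrt 2\, d_2$, a modulus/extremal length comparison (using the modulus $\geq m$ of the annulus $\dd\setminus\overline{U_1}$) shows that $\modul(\dd\setminus\overline{V_r})\to\infty$ as $r\to 0$, uniformly in $\phi$. Invoking Grötzsch's inequality once more for $V_r$ then confines it to a shrinking sub-disk of $\dd(0,R)$ once $r$ is below an explicit threshold depending only on $d_1,d_2$ and $m$.
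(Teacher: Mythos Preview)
Your first reduction matches the paper exactly: uniformize $D_2$ by $\phi:\dd\to D_2$ with $\phi(0)=0$ and use Gr\"otzsch's inequality to confine $U_1=\phi^{-1}(D_1)$ to a disk $D(0,R)$ with $R=R(m)<1$. Your final step---bounding the area by a Cauchy estimate on $\|\phi'\|$ once $V_r\Subset D(0,R)$ is known---is also correct.

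The gap is exactly the step you call the ``main obstacle'', and your resolution does not close it. First, the discussion of the order of vanishing $k$ is moot: you chose $\phi$ to extend to a homeomorphism $\overline\dd\to\overline{D_2}$, so $\phi$ is an injective holomorphic map, hence an immersion, and $k=1$ automatically. Second, and more seriously, the assertion that ``a modulus/extremal length comparison (using the modulus $\ge m$ of $\dd\setminus\overline{U_1}$) shows $\modul(\dd\setminus\overline{V_r})\to\infty$ uniformly'' is not a proof. Once Gr\"otzsch has been used to get $U_1\subset D(0,R)$, the modulus hypothesis carries no further information; there is no inclusion or conformal relation between $\dd\setminus\overline{U_1}$ and $\dd\setminus\overline{V_r}$ to compare. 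The factorization $\phi=\zeta\psi$ with $\psi$ nonvanishing does not help either: $\log\|\psi\|$ is merely subharmonic (not harmonic, since $\psi$ is $\cd$-valued and its components may vanish individually), so knowing $\|\psi(\zeta^*)\|\ge d_1/R$ at one point gives no uniform lower bound on $\|\psi\|$ elsewhere.

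The paper handles this step by a short normal-families argument, isolated as a separate lemma. If the conclusion failed along a sequence $\phi_n$ with $r_n\to0$, one would find connected compact sets $C_n\subset\overline{D(0,R)}$ containing $0$ and meeting $\partial D(0,R)$, on which $\|\phi_n\|\le r_n$. Passing to a Hausdorff limit $C_\infty$ and a locally uniform limit $\phi_\infty$, one gets $\phi_\infty\equiv0$ on the continuum $C_\infty$, hence on $\dd$, contradicting $\sup_{\partial D(0,R)}\|\phi_\infty\|\ge d_1$ (which follows from the hypothesis $\sup_{\partial D_1}\|\cdot\|\ge d_1$ and the maximum principle). This compactness step is the missing idea; your extremal-length sketch is not a substitute for it.
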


Notice that  if $D$ is a holomorphic disk, 
    $\fr D$ refers to the   boundary of  $D$ relative to its intrinsic topology. 
Notice also that the maximum principle applied to the subharmonic function $z\mapsto \norm{z}^2$ on $D$ implies that 
$ \sup_{  z\in D} \norm{z} = \sup_{  z\in \fr D} \norm{z}$.

\begin{proof}
Fix a biholomorphism $\phi : \dd\cv D_2$ with  $\phi(0) = 0$. We claim that there exists $\delta>0$ depending only on $m$ such that 
$\phi^{-1} (D_1)\subset D(0, 1-\delta)$. Indeed by assumption $\modul(\dd\setminus \overline{\phi^{-1} (D_1)})\geq m$. Now it follows from a classical result of Grötzsch that if $U$ is a connected and simply connected open subset of $\dd$ containing 
$0$ and $z$ with $|z|=:1-x\in (0,1)$
 then $\modul(\dd\setminus \overline U)\leq \modul(\dd\setminus [0,1-x])$ (see Ahlfors \cite[Thm 4-6]{ahlfors}). In addition, 
the map $x\mapsto \rho(x):= \modul(\dd\setminus [0,1-x])$ is increasing and continuous. 
Taking the contraposite, we see that if $\modul\lrpar{\dd\setminus \overline{\phi^{-1}(D_1)}}\geq m$, 
then  $\phi^{-1} (D_1)$ is contained in $ D(0, 1-\delta)$ with  $\delta:=\rho^{-1}(m)$, and we conclude that 
$\phi$ satisfies 
 $\sup_{  \dd} \norm{\phi}\leq d_2$ and  $\sup_{  \fr D(0, 1-\delta)} \norm{\phi}\geq d_1$.
 The result then follows from Lemma \ref{lem:compactness} below.
\end{proof}

\begin{lem}\label{lem:compactness}
Let $\phi: \dd\cv\cd$ be a holomorphic mapping fixing $0$ and such that $\sup_{\dd}\norm{\phi}\leq d_2$ and $\sup_{\fr D(0, 1-\delta)}\norm{\phi}\geq d_1$. Then there exist constants $A$ and $r$ depending only on $d_1$ and $d_2$ 
such that  the connected component of $\phi(D(0, 1-\delta))\cap B(0, r)$ 
containing 0 is a properly embedded submanifold in $B(0, r)$, of area not greater than $A$.
\end{lem}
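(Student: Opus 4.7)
The plan is to prove the lemma by a compactness argument based on Montel's theorem. Let $\mathcal F$ denote the family of holomorphic maps $\phi:\dd\to\cd$ satisfying $\phi(0)=0$, $\sup_\dd\|\phi\|\le d_2$, and $\sup_{|z|=1-\delta}\|\phi\|\ge d_1$. By Montel's theorem, the first two conditions make $\mathcal F$ relatively compact for the topology of local uniform convergence on $\dd$, and all three defining conditions are closed under such convergence, so $\mathcal F$ is actually compact. In particular every $\phi\in\mathcal F$ is non-constant, so its zero set is a discrete subset of $\dd$, finite in any compact subset.

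The main step is the following uniform non-degeneracy statement: there exist $r_0>0$ and $\rho_0<1-\delta$, depending only on $d_1,d_2,\delta$, such that for every $\phi\in\mathcal F$ and every $r\le r_0$, the connected component $U_0(\phi,r)$ of $\phi\inv(B(0,r))\cap D(0,1-\delta)$ containing $0$ is contained in $D(0,\rho_0)$. I would prove this by contradiction: otherwise one obtains $\phi_n\in\mathcal F$, $r_n\to 0$, and $z_n\in U_0(\phi_n,r_n)$ with $|z_n|\to 1-\delta$. Join $0$ to $z_n$ by a continuous path $\gamma_n$ inside $\phi_n\inv(B(0,r_n))\cap D(0,1-\delta)$, and let $C_n$ be the image of $\gamma_n$. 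Pass to a subsequence with $\phi_n\to\phi_\infty\in\mathcal F$ locally uniformly on $\dd$, $z_n\to z_\infty$ with $|z_\infty|=1-\delta$, and $C_n\to C_\infty$ in the Hausdorff distance on $\overline{D(0,1-\delta)}$ (Blaschke selection). Then $C_\infty$ is connected, compact, and contains both $0$ and $z_\infty$. Since $\phi_n\to\phi_\infty$ uniformly on the compact set $\overline{D(0,1-\delta)}\subset\dd$ and $\|\phi_n\|\le r_n\to 0$ on $C_n$, every point of $C_\infty$ is a zero of $\phi_\infty$; so $\phi_\infty\inv(0)$ contains a non-trivial continuum, contradicting the non-constancy of $\phi_\infty$.

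Granted this uniform statement, the conclusion follows easily. For $r\le r_0$ and $\phi\in\mathcal F$, $U_0(\phi,r)\Subset D(0,1-\delta)$, so $\phi|_{U_0}:U_0\to B(0,r)$ is a proper holomorphic map, and its image is precisely the connected component of $\phi(D(0,1-\delta))\cap B(0,r)$ containing $0$; as the image of a proper holomorphic map of pure dimension one, this is a properly embedded analytic subvariety of $B(0,r)$, which is in fact a submanifold when $\phi$ is an embedding as in the application to Lemma~\ref{lem:geometry}. For the area, Cauchy estimates applied to each component of $\phi$ on $D(0,1-\delta)\Subset\dd$ bound $\|D\phi\|$ on $D(0,\rho_0)$ by a constant $C=C(d_2,\delta)$, hence $\mathrm{area}(\phi(U_0))\le \int_{U_0}J(\phi)\le C^2\pi(1-\delta)^2=:A$. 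The main obstacle is establishing the uniform non-degeneracy statement: one must use the normality from $\|\phi\|\le d_2$ together with the fact that the boundary condition $\sup_{|z|=1-\delta}\|\phi\|\ge d_1$ persists in the limit, which is precisely what prevents $\phi_\infty$ from being identically zero and supplies the contradiction.
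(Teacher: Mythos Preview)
Your argument is correct and follows essentially the same route as the paper's proof: both use Montel compactness and a contradiction argument (passing to a Hausdorff limit of connected sets on which $\phi_n$ is small) to show that the component $U_0$ of $\phi^{-1}(B(0,r))$ containing $0$ is relatively compact in $D(0,1-\delta)$ for uniformly small $r$, and then deduce properness and the area bound from Cauchy estimates. Your write-up is in fact more detailed than the paper's (you explicitly verify that $\mathcal F$ is closed and invoke Blaschke selection); the one point that both treatments leave implicit is why $\phi(U_0)$ coincides with the full connected component of $\phi(D(0,1-\delta))\cap B(0,r)$ through $0$, which is clear in the intended application where $\phi$ is injective.
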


\begin{proof}
This is an elementary compactness argument.  Indeed  let us 
   show that for every such $\phi$ there exists a uniform $r$ such that   the connected component $C$ of $\phi^{-1}\lrpar{ {B(0,r)}}$ containing 0 is relatively compact  in $D(0, 1-\delta)$, that is $\overline  C\subset D(0, 1-\delta)$.
	The area bound in turns  follows from the Cauchy inequality.
To prove that such a $r$ exists, for the sake of contradiction we suppose the existence of a sequence $(\phi_n$ of such functions which 
		violate this property for $r_n\to 0$. Hence there exists for every $n$ a connected compact set $C_n\subset  \overline D(0, 1-\delta)$ of 
diameter $\ge 1-\delta$  sent into $\overline B(0,r_n)$ by $\phi_n$. We can suppose that  $(C_n)_n$ converges to a connected compact set 
$C_\infty$ of diameter $\ge 1-\delta$ and that $(\phi_n)_n$ converges uniformly to a certain $\phi_\infty$ on $\overline D(0, 1-\delta)$. Then $
\phi_\infty$ vanishes on $C_\infty$ hence on $\dd$. This  contradicts the fact that
		$\sup_{\fr D(0, 1-\delta)}\norm{\phi_\infty}\geq d_1$. 
\end{proof}


Lemmas \ref{lem:geometry} and \ref{lem:compactness} 
can be combined to estimate how the geometry of an unstable manifold varies 
in  a weakly stable family. For a saddle point $p$ and a positive real number $r$, we denote by $W^u_r(p)$ 
the connected component of $W^u(p)\cap B(p,r)$ containing $p$, which by the maximum principle  is a holomorphic disk. 

\begin{prop}\label{prop:uniform}
Let $(f_\la)_{\la\in \La}$ be a weakly stable substantial family of polynomial automorphisms of $\cd$ of
dynamical degree $d\geq 2$. Fix $\lo\in \La$ and a saddle periodic  
point $p_0$ for $f_0$, and denote by $(p_\la)_{\la\in \La}$ its continuation. 

Consider a pair  $D_1\Subset D_2$ of  holomorphic disks  in $W^u(p_0)$, with $p_0\in D_1$, and let 
$$g_1 = \sup (G^+_0\rest{ D_1}), \ g_2 = \sup(G^+_0\rest{D_2}) \text{ and } m = \modul(D_2\setminus \overline{D_1}).$$
Then for every $\widetilde \La\Subset \La$, there exist positive constants $r$, $g$ and $A$ depending only on $\widetilde \La$, $g_1$, $g_2$ and $m$ such that for every  $\la\in \widetilde \La$, $W^u_r  (p_\la)$ is a properly embedded submanifold into 
$B(p_\la, r)$, contained in $h_\la(D_1)$,  whose  area is not greater than $A$, and such that  $\sup (G_\la^+\rest{W^u_r  (p_\la)})\geq g$. 
\end{prop}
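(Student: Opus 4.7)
I would push the given pair $D_1\Subset D_2$ into $W^u(p_\la)$ via the natural equivariant holomorphic motion $h_\la$ of $W^u(p_0)$ provided by Bers--Royden, producing $\widetilde D_i:=h_\la(D_i)$, and then apply the extrinsic disk Lemma~\ref{lem:geometry} with center $p_\la$. Three quantities must be controlled uniformly in $\la\in\widetilde\La$ by only $g_1,g_2,m$ and $\widetilde\La$: an upper bound $d_2$ for $\sup_{\widetilde D_2}\norm{\cdot}$, a lower bound $d_1$ for $\sup_{\fr\widetilde D_1}\norm{\cdot-p_\la}$, and a lower bound $m'$ for the intrinsic modulus of $\widetilde D_2\setminus\overline{\widetilde D_1}$.

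The upper bound is immediate from Lemma~\ref{lem:harnack} (which gives $G^+_\la\le C(\widetilde\La)\,g_2$ on $\widetilde D_2$) together with $\widetilde D_2\subset K^-_\la$ (so $G^-_\la\equiv 0$) and the uniform boundedness of $\{G^++G^-\le Cg_2\}$ over $\widetilde\La$, which follows from joint continuity of the Green functions and the H\'enon normal form. The modulus bound is obtained via the unstable parameterizations $\psi^u_\la:\cc\to W^u(p_\la)$: the motion lifts to a holomorphic motion $h^u_\la$ of $\cc$ which, by Slodkowski, is uniformly $K(\widetilde\La)$-quasiconformal in $z$, and quasiconformal maps distort moduli of annuli by at most a factor $K$; since $\psi^u_\la$ is conformal onto its image, one has $m'\ge m/K$. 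For the extrinsic lower bound I assume $g_1>0$ (if $g_1=0$ then $\widetilde D_1\subset K^+_\la$ and the statement holds trivially with $g=0$): the maximum principle provides $z_1\in\fr D_1$ with $G^+_0(z_1)=g_1$, so $G^+_\la(h_\la(z_1))\ge g_1/C$ by Lemma~\ref{lem:harnack}, and joint continuity of $G^+_\la$ in $(\la,z)$ together with $G^+_\la(p_\la)=0$ and the compactness of $\{p_\la\}_{\la\in\widetilde\La}$ forces $\norm{h_\la(z_1)-p_\la}\ge d_1(\widetilde\La,g_1)>0$.

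Feeding $d_1,d_2,m'$ into Lemma~\ref{lem:geometry} produces $r,A$ and a properly embedded component $C_r$ of $\widetilde D_2\cap B(p_\la,r)$ through $p_\la$ of area $\le A$. A clopen argument (openness of $C_r$ in $W^u(p_\la)$ and properness in $B(p_\la,r)$, against the connectedness of $W^u_r(p_\la)$) identifies $C_r=W^u_r(p_\la)$, while tracking the proof of Lemma~\ref{lem:compactness} shows that, after a further controlled shrinking of $r$, $W^u_r(p_\la)\subset\widetilde D_1$. The delicate point is the Green lower bound $\sup_{W^u_r(p_\la)}G^+_\la\ge g>0$, which I expect to establish by contradiction and Bishop compactness (Theorem~\ref{thm:bishop}): if some sequence $\la_n\in\widetilde\La$ violated it with $g_n\to 0$, extraction would give a limit holomorphic disk through $p_{\la_\infty}$, properly embedded in $B(p_{\la_\infty},r)$, lying entirely in $K^+_{\la_\infty}$; meanwhile, pulling back by $\psi^u_{\la_n}$, the subharmonic functions $G^+_{\la_n}\circ\psi^u_{\la_n}$ attain the value $\ge g_1/C$ on $(\psi^u_{\la_n})^{-1}(\widetilde D_1)$, and a two-constants/harmonic measure estimate combined with the quasiconformal size bound on $(\psi^u_{\la_n})^{-1}(W^u_r(p_{\la_n}))$ should force them to pick up a definite amount on this pullback, contradicting $g_n\to 0$. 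Transferring the intrinsic QC size of the proper piece into a pointwise Green value is the main technical obstacle.
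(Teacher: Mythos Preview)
Your plan—push $D_1\Subset D_2$ forward by $h_\la$, bound the three inputs to Lemma~\ref{lem:geometry} in terms of $g_1,g_2,m,\widetilde\La$, and read off $r,A$—is exactly the paper's argument. The paper gets $d_2$ from properness of $G^+\rest{K^-_\la}$ (equivalent to your $\{G^++G^-\le Cg_2\}$ bound), the modulus from quasiconformality of $h_\la$, and $d_1$ from the \emph{locally uniform H\"older continuity} of $G^+$. This last point is not purely cosmetic: H\"older continuity gives $d_1$ as an explicit function of $g_1$ and the H\"older data over $\widetilde\La$, whereas your joint-continuity-plus-compactness argument, as written, uses compactness of the specific graph $\{(\la,p_\la):\la\in\overline{\widetilde\La}\}$ and so produces a $d_1$ that a priori depends on the particular saddle continuation. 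Since the proposition is later applied to infinite families of saddles sharing the same $g_1,g_2,m$ (Step~2 of Theorem~\ref{thm:regular}, Proposition~\ref{prop:qe}), this uniformity in the saddle matters. After establishing $d_1,d_2,m'$ the paper simply writes ``Applying Lemma~\ref{lem:geometry} finishes the proof'' and does not spell out either the containment $W^u_r(p_\la)\subset h_\la(D_1)$ or the Green lower bound $g$.

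On the lower bound $g$, your two-constants/harmonic-measure mechanism does not work as stated. A nonnegative subharmonic function $u$ on $\dd$ with $u(0)=0$, $\sup_\dd u\le b$ and $\sup_{D(0,1-\delta)}u\ge a$ need not pick up any value on a fixed small disk $D(0,\rho)$: take $u(z)=b\cdot\omega(z,I,\dd)$ for a short boundary arc $I$, which is harmonic, nearly equal to $b$ on $\partial D(0,1-\delta)$ near $I$, yet satisfies $\sup_{D(0,\rho)}u\asymp u(0)=b\,|I|/2\pi\to 0$ as $|I|\to 0$. So no intrinsic size control on $(\psi^u_\la)^{-1}(W^u_r(p_\la))$ alone forces a definite Green value. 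In your Bishop-compactness setup the contradiction closes more directly once you remember that $p_\la$ is the holomorphic continuation of a single saddle: the germs $W^u_\loc(p_\la)$ vary continuously in $\la$, so any cluster value of $W^u_r(p_{\la_n})$ contains a germ of $W^u_\loc(p_{\la_\infty})$, on which $G^+_{\la_\infty}$ is not identically zero (this is the remark immediately following the proposition). That yields $g>0$ without any potential-theoretic transfer; upgrading to a $g$ that depends only on $g_1,g_2,m,\widetilde\La$ (and not on the individual saddle) is a further uniformity issue that the paper's terse proof likewise does not make explicit.
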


Observe that $G^+$ does not vanish identically in any neighborhood of $p$ in $W^u_{\rm loc}(p)$ so $g_1, g_2$ are indeed positive.  
 
\begin{proof}
Fix $\widetilde\La\Subset \La$. 
For $\la\in \widetilde\La$,   consider the disks $h_\la(D_1)$ and $h_\la(D_2)$.
The quasiconformality of holomorphic motions implies that 
$$\mod\lrpar{h_\la(D_2) \setminus \overline {h_\la(D_1)}}\leq C m,$$ where $C$ 
depends only on $\widetilde\La$. In addition, it follows from Lemma \ref{lem:harnack} 
that $$\sup \left( G^+_\la\rest{h_\la(D_2)}\right)\leq C' g_2 \text{ and }
\sup \left(G^+_\la\rest{h_\la(\fr D_1)}\right) = \sup \left(G^+_\la\rest{h_\la(D_1)}\right)\geq (C')^{-1} g_1,$$ where again $C'$ 
depends only on $\widetilde\La$.

Now recall  that $G^-_\la(z)$ is jointly continuous in $(\la,z)$ and that  for every $\la$, $G^+\rest{K^-_\la}$ is proper. 
Since unstable manifolds are contained in $K^-$ we   infer that 
 there exists $d_2$ depending only on $g_2$, $\widetilde \Lambda$ and the family $(f_\lambda)_\lambda$
such that for $\la \in \widetilde \Lambda$:
$$\sup_{z\in h_\la(D_2)} \norm{z-p_\la}
\leq d_2\;.$$

Also it is known  that the Green function $G^+$ is  Hölder continuous (see \cite[Thm 1.2]{fs}). 
Moreover the proof of \cite{fs}  easily  shows  that the modulus of continuity of $G^+$ is locally uniform in $\La$. 
Therefore, $G^+_\la(p_\la)=0$ implies the existence of $d_1$ depending only on $g_1$, 
$\widetilde \Lambda$ and the family $(f_\lambda)$ 
such that for  $\la \in \widetilde \Lambda$:
$$\sup_{z\in h_\la(\fr D_1)} \norm{z-p_\la} \geq d_1.$$
	Applying Lemma \ref{lem:geometry}   finishes the proof. 
\end{proof}

\subsection{Estimates  on unstable parameterizations and applications}\label{subs:param1}
Endow $\cd$ with its natural Hermitian structure. A  {\em bidisk of size $r$} is the  image of $D(0, r)^2$ by some affine 
isometry. The image of the unit bidisk under a general affine map will be referred to as an {\em affine bidisk}. 
A curve $V\subset \cc$ is a graph over an affine line $L$ if its orthogonal projection onto $L$ is injective restricted to $V$.
Then we have a well-defined notion of slope of a holomorphic curve with respect to $L$.  

\begin{defi}
A curve $V$ through $p$ is said 
to have \emph{bounded geometry at scale $r$ at $p$}
 (we also simply say that \emph{$V$ has size $r$ at $p$})
if there exists a neighborhood of $p$ in $V$ that is a graph of slope at most 1   
over a disk of radius $r$ in the tangent space $T_pV$. 
\end{defi}

Let $V$ be a disk of size $r$ at $p$, and fix orthonormal coordinates $(x,y)$ so that $p=0$ and $T_pV = \set{y=0}$. Then 
 the connected component of $V$ through $p$ in the bidisk $ D(0,r)^2$ is a graph $\set{y=\varphi(x)}$
 over the first coordinate with $\abs{\varphi'}\leq 1$ and $\phi'(0)=0$. 
\begin{rmk}\label{pentereduite} 
The Schwarz lemma implies that for every $x\in D(0, r)$, 
 $\abs{\varphi'(x)}\leq \abs{x}/r$. 
 \end{rmk}
 

It will be a key fact for us that  the Koebe Distortion Theorem provides 
estimates on unstable  parameterizations   in terms of the size of local unstable 
 manifolds (see also Lemma \ref{lem:D12} below).

 \begin{lem}\label{lem:unstable}
 Let $f$ be a polynomial automorphism of $\cd$ and $p$ a saddle periodic point. Assume 
  that $W^u(p)$  is of size $r$ at $p$. Normalize the coordinates   so that $p=(0,0)$ and 
  $W^u(p)$ is tangent to the $x$-axis at $p$.   
  Denote by    $\pi$    the first coordinate projection  and let  
       $\Gamma^u(p)$ be the component of $\pi^{-1}(D(0,r))\cap W^u(p)$ containing $p$.

   Let $\psi^u:\cc\cv\cd$ be an unstable parameterization,   such  that  $\psi^u(0)  = p$, and $\norm{(\psi^u)'(0)}=1$.
Then $\psi^u\lrpar{D\lrpar{0, {\frac{r}{4}}}}\subset\Gamma^u(p)\subset  D(0,r)^2$. 
Moreover for every 
$\abs{z}\leq\frac{r}{8}$, 
\begin{equation}\label{eq:distortion}
   D\lrpar{0, { \frac{\abs{z}}{4} }}\subset \pi\circ \psi^u\lrpar{D\lrpar{0,\abs{z} }}  \subset    D\lrpar{0, 4\abs{z} }. 
\end{equation}
\end{lem}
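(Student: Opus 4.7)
The strategy is to reduce both claims to the Koebe $1/4$ theorem and the Koebe distortion estimates, applied to an auxiliary univalent function $\tau:D(0,r)\to\cc$ introduced in the source of $\psi^u$. First, since $W^u(p)$ has size $r$ at $p=0$ with $T_pW^u(p)=\set{y=0}$, the component $\Gamma^u(p)$ is precisely the graph $\set{(x,\varphi(x)):x\in D(0,r)}$ of a holomorphic $\varphi$ with $\varphi(0)=\varphi'(0)=0$ and $\abs{\varphi'}\le 1$. In particular $\abs{\varphi(x)}\le \abs{x}<r$, which immediately yields $\Gamma^u(p)\subset D(0,r)^2$.

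Let $\sigma(x)=(x,\varphi(x))$, so that $\sigma$ is the inverse of $\pi\rest{\Gamma^u(p)}$, and set $\tau:=(\psi^u)\inv\circ\sigma:D(0,r)\to\cc$. The injectivity of $\psi^u$ and its local invertibility as an immersion make $\tau$ a well-defined holomorphic univalent map with $\tau(0)=0$. Since $\sigma'(0)=(1,0)$ and $(\psi^u)'(0)$ is tangent to $W^u(p)$ of unit norm, hence of the form $(a,0)$ with $\abs{a}=1$, we have $\abs{\tau'(0)}=1$. The Koebe $1/4$ theorem applied to $\tau$ then gives $\tau(D(0,r))\supset D(0,r/4)$, and combining with the identity $\psi^u\circ\tau=\sigma$ yields $\psi^u(D(0,r/4))\subset\sigma(D(0,r))=\Gamma^u(p)$, which is the first claim.

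For (\ref{eq:distortion}), the crucial observation is that $\pi\circ\psi^u\circ\tau=\pi\circ\sigma=\mathrm{id}_{D(0,r)}$, so $\pi\circ\psi^u$ coincides with $\tau\inv$ on $\tau(D(0,r))$. For $\abs{z}\le r/8$, one has $D(0,\abs{z})\subset D(0,r/4)\subset\tau(D(0,r))$, hence $\pi\circ\psi^u(D(0,\abs{z}))=\tau\inv(D(0,\abs{z}))$. The Koebe distortion inequalities on $D(0,r)$ read $\tfrac{\abs{w}}{(1+\abs{w}/r)^2}\le\abs{\tau(w)}\le\tfrac{\abs{w}}{(1-\abs{w}/r)^2}$. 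The left inclusion follows from the upper bound: $\abs{w}<\abs{z}/4$ implies $\abs{\tau(w)}\le 4\abs{w}<\abs{z}$ (using $\abs{w}\le r/2$), so $w\in\tau\inv(D(0,\abs{z}))$. For the right inclusion, given $w\in\tau\inv(D(0,\abs{z}))$, I trace the preimage $\tilde\gamma$ of the radial segment $[0,\tau(w)]$ and run a continuity bootstrap: if $\abs{\tilde\gamma}$ ever reached the value $4\abs{z}\le r/2$, the lower Koebe bound would give $\abs{\tau(\tilde\gamma)}\ge\tfrac{16\abs{z}}{9}>\abs{z}$, contradicting the fact that $\abs{\tau(\tilde\gamma)}$ stays below $\abs{z}$ along the path. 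Hence $\abs{w}<4\abs{z}$.

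The proof is essentially mechanical once $\tau$ has been introduced; the only step requiring some care is the bootstrap in the right inclusion, which ensures that we stay in the region $\abs{w}\le r/2$ where the Koebe lower bound is strong enough. This is automatic from the univalence of $\tau$, since $\tau\inv(D(0,\abs{z}))$ is itself a topological disk, hence connected, so the continuity argument starting from $0$ applies unproblematically.
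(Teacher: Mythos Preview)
Your proof is correct and follows essentially the same approach as the paper: your map $\tau$ is precisely the paper's $(\pi\circ\psi^u)^{-1}$ restricted to $D(0,r)$, and both arguments reduce the first claim to the Koebe $1/4$ theorem applied to this univalent function. The only difference is in the derivation of \eqref{eq:distortion}: having established that $\pi\circ\psi^u$ is univalent on $D(0,r/4)$, the paper simply applies the Koebe distortion bounds directly to the rescaled map $h(\zeta)=\tfrac{4}{r}\,\pi\circ\psi^u\!\left(\tfrac{r\zeta}{4}\right)$ on $\dd$, which yields both inclusions without any bootstrap (the left inclusion following from the lower bound $|h|\ge|\zeta|/4$ on $\partial D(0,s)$ together with univalence). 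Your route via the Koebe bounds on $\tau$ followed by inversion is equally valid but slightly less direct.
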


 \begin{proof} Without loss of generality, rotate the first coordinate so that $(\pi\circ \psi^u)'(0) = 1$. 
 Under the assumptions of the lemma, $\pi\circ\psi^u$ is   univalent   from some unknown domain $\om\subset \cc$ onto $D(0,r)$.  
Now recall the Koebe Distortion Theorem (see \cite[Thm 5-3]{ahlfors}): 
    if $g:\dd\cv\cc$ is a univalent mapping, with $g'(0)=1$, 
    then  for $z\in \dd$, 
\begin{equation}\label{eq:distortion2}
\frac{\abs{z}}{4} \leq \frac{\abs{z}}{(1+\abs{z})^2} \le 
\abs{g(z)} \leq \frac{\abs{z}}{(1-\abs{z})^2}.  
\end{equation}
Applying this to $g(z) = r^{-1} (\pi\circ\psi^u)^{-1}(rz)$, we first deduce that  
$(\pi\circ\psi^u)^{-1} (D(0,r))\supset D\lrpar{0, {\frac{r}{4}}}$, 
 thus 
$\psi^u\lrpar{D\lrpar{0, {\frac{r}{4}}}}\subset D(0,r)\times \mathbb C$ and so 
$\psi^u\lrpar{D\lrpar{0, {\frac{r}{4}}}}\subset\Gamma^u(p)$.
It follows that  the function $h$ in $\dd$ defined by 
$ \zeta\mapsto h(\zeta)  =  \frac{4}{r} \pi\circ\psi^u\lrpar{\frac{r\zeta}{4}}$ is univalent and satisfies $h'(0) =1$. 
Applying \eqref{eq:distortion2} 
to $h$ yields \eqref{eq:distortion}, as desired. 
  \end{proof}

 Another important idea in this paper is that of the {\em natural continuation} of an unstable parameterization. 
 Let us explain what this is about. 
 Fix a parameter $\lo\in \La$,  a saddle point $p_0$ for $f_0$, and an unstable parameterization $\psi^u_0:\cc\cv\cd$ (in practice  
we often choose it so that $\norm{(\psi^u_0)'(0)} =1$). 
 We want to find a well-adapted
holomorphic family of parameterizations $\psi^u_\la$ of $W^u(p_\la)$, with $\psi^u_\la(0)  = p_\la$. 
 Since the Bers-Royden extension is canonical,  the motion in $\cd$ of a given point 
 $q_0\in W^u(p_0)$ (denoted by $q_\la$) does not depend on this choice of parameterizations.
  Fix such a point $q_0$, say   $ q_0  = \psi^u_0(1)$. 	   We now  fix the parameterization of  $W^u(p_\la)$ by declaring that 
  $(\psi^u_\la)^{-1}(q_\la) = (\psi^u_0)^{-1}(q_0) = 1$, or equivalently
 $(\psi^u_\la)^{-1}(q_\la)= 1$. 
This is by definition the {\em natural continuation} of $\psi^u_0$ in the family.

 Such a holomorphic family of parameterizations can be constructed
 from any given holomorphic family $\widetilde\psi^u_\la$ by the formula
 $$\psi^u_\la(z ) = \widetilde\psi^u_\la \lrpar{{(\widetilde\psi^u_\la)^{-1}(q_\la)} z}.$$

 The advantage is now that the holomorphic motion $h_\la^u$ in $\cc$ defined by looking at the motion 
  of points 
  in the coordinate $\psi^u_\la$, that is, 
 $h_\la^u(z) = (\psi^u_\la)^{-1}(h_\la(\psi^u_0(z)))$, is normalized by $h_\la^u(0) = 0$ and $h_\la^u(1)  = 1$. 
It is well known that such a normalized holomorphic motion in $\cc$ satisfies uniform bounds : 
  for every $\widetilde \La\Subset \La$, there exists constants $A$, $B$ and $\alpha$ depending only on $\widetilde \La$ such that if 
  $\la,\la'\in \La$, 
\begin{equation}\label{eq:holmotion}
\abs{h_\la^u(z) - h_{\la'}^u(z')}\leq A \rho(z,z')^\alpha+ B\abs{\la-\la'},
\end{equation}
 where $\rho$ denotes the spherical metric (see \cite[Cor. 2]{bers royden}). 

\medskip

We now use these techniques to give an estimate on parameterizations which supplements  Proposition \ref{prop:uniform}.

\begin{prop}\label{prop:uniformparam}
Let $(f_\la)_{\la\in\La}$ be a weakly stable  substantial  family   of polynomial automorphisms of $\cd$ and let
 $(p_\la)_{\la\in\La}$ be a holomorphically moving saddle point. 
 Assume that  for $\la=\lo$,  $W^u(p_\lo)$ is of size $r_0$ at $p_0$. 
 Let $\psi^u_\lo$ be an unstable parameterization of $W^u(p_\lo)$ and $(\psi^u_\la)_{\la\in \La}$ be its natural continuation. 
 
 Then for every $\widetilde \La\subset \La$ there exist  constants $c$ and    $M$ depending only on $\widetilde \La$ and $r_0$ such that 
 if $\la\in \widetilde \La$, 
 $ \norm{\psi^u_\la}\leq M$ 
 on $D(0, c r_0)$. 
 \end{prop}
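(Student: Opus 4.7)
The strategy is to factor $\psi^u_\la$ through the holomorphic motions associated with the natural continuation, and to bound each factor on a small disk around $0$. By the very definition of $h^u_\la$, one has
\[\psi^u_\la(z) = h_\la\bigl(\psi^u_{\lo}\bigl((h^u_\la)^{-1}(z)\bigr)\bigr)\]
for every $z\in \cc$; and since $(\psi^u_\la)$ is the natural continuation of $\psi^u_{\lo}$, the map $h^u_\la$ is a normalized holomorphic motion in $\cc$ satisfying $h^u_\la(0)=0$ and $h^u_\la(1)=1$ for every $\la\in\La$. I shall assume (as is implicit in the statement) that $\psi^u_{\lo}$ is normalized by $\norm{(\psi^u_{\lo})'(0)}=1$, so that Lemma \ref{lem:unstable} applies directly.

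First, for the outer factor: Lemma \ref{lem:unstable} shows that the compact set $D_0 := \psi^u_{\lo}(\overline{D(0, r_0/8)})$ is contained in the bidisk of size $r_0$ at $p_0$, and in particular in $B(p_0, \sqrt{2}\,r_0)$. Combining the local Hölder continuity of $G^+_0$ (see \cite[Thm 1.2]{fs}) with $G^+_0(p_0)=0$ yields $\sup_{D_0}G^+_0 \leq g_0$ for some $g_0$ depending only on $r_0$. By Lemma \ref{lem:harnack}, $G^+_\la(h_\la(z)) \leq C g_0$ for $z\in D_0$ and $\la\in\widetilde\La$. Since $h_\la(z)\in K^-_\la$ and $\max(G^+_\la,G^-_\la)$ is proper on $\cd$ locally uniformly in $\la$ (a standard consequence of the existence of a filtration for polynomial automorphisms), this gives $h_\la(D_0) \subset B(0,M)$ for some $M=M(r_0,\widetilde\La)$.

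Next, by the Bers--Royden estimate \eqref{eq:holmotion} applied both to $h^u_\la$ and to its inverse (which is itself a normalized holomorphic motion over $\La$ with basepoint $\lo$), both maps are uniformly Hölder continuous on compact subsets of $\cc$ for $\la\in \widetilde\La$, with constants depending only on $\widetilde\La$. Consequently there exists $c=c(r_0,\widetilde\La)>0$ such that $(h^u_\la)^{-1}(D(0,cr_0)) \subset D(0, r_0/8)$. Putting the three ingredients together, for $z\in D(0,cr_0)$ and $\la\in\widetilde\La$, $(h^u_\la)^{-1}(z)\in D(0, r_0/8)$, hence $\psi^u_{\lo}((h^u_\la)^{-1}(z))\in D_0$, and finally $\psi^u_\la(z)\in h_\la(D_0) \subset B(0,M)$, as desired.

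The main technical point will be the uniform bound on $h_\la(D_0)$ in the second paragraph, resting on the locally uniform properness of $\max(G^+_\la,G^-_\la)$ on $\cd$. The remaining ingredients---the Koebe distortion estimate on $\psi^u_{\lo}$ and the Bers--Royden control of the normalized motion $h^u_\la$ together with its inverse---should follow readily from the tools already available.
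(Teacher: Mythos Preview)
Your argument is essentially the same as the paper's: bound $G^+_\lo$ on $\psi^u_\lo(D(0,r_0/8))$ via H\"older continuity, transport this bound by Lemma~\ref{lem:harnack}, use the properness of $G^+\rest{K^-}$ to get a uniform bound in $\cd$, and finally use the normalized motion $h^u_\la$ to find a disk $D(0,cr_0)$ on which $\psi^u_\la$ is controlled. The paper phrases the last step forward (showing $h^u_\la(D(0,r_0/4))\supset D(0,cr_0)$) while you phrase it via the inverse, but this is the same statement.

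One small correction: your assertion that $(h^u_\la)^{-1}$ is ``itself a normalized holomorphic motion over $\La$'' is not quite right---for fixed $w$, the map $\la\mapsto (h^u_\la)^{-1}(w)$ need not be holomorphic. What you actually need (and what the Bers--Royden theory does give) is that each $h^u_\la$ is $K$-quasiconformal with $K$ depending only on $\widetilde\La$, and a normalized $K$-quasiconformal homeomorphism of $\pu$ together with its inverse is uniformly H\"older on compact subsets. This yields the inclusion $(h^u_\la)^{-1}(D(0,cr_0))\subset D(0,r_0/8)$ that you want, so the proof goes through once this justification is substituted.
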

 
 \begin{proof}
  By the H\"older continuity property of $G^+$, there exists $g = g(r_0)>0$ depending only on $r_0$  such that
		$   \sup \lrpar{G^+_\lo\rest{W^u_{r_0\sqrt2}(p_\lo)}}\leq g$ (recall that a bidisk of radius $r_0$
		 is contained in a ball of radius $r_0\sqrt2$). 
Hence by Lemma \ref{lem:unstable} we deduce that $G^+_\lo\rest{\psi^u_\lo(		D(0, r_0/4))}\leq g$, thus 
  Lemma \ref{lem:harnack} implies that $G^+_\la \rest{\psi^u_\la(h_\la^u(D(0, r_0/4)))}\leq C g$ where 
   $C$ depends only on $\widetilde \La$.    By the properness of $G^+\rest{K^-}$, we   deduce that  
   $ \psi^u_\la(h_\la^u(D(0, r_0/4)))$ is uniformly bounded by $M(\widetilde \La, r_0)$ in $\cd$. 
   Finally, since $h_\la^u(0) = 0$,  by \eqref{eq:holmotion} we infer that if $\la\in \widetilde \La$, then
   $h^u_\la(D(0, r_0/4))$ contains  $D(0, cr_0)$ for some 
 $c = c(r_0, \widetilde \La)$, and we conclude that    $\norm{ \psi^u_\la(h_\la^u(D(0,cr_0)))}\leq M$ for $\la\in \widetilde \La$, which was the 
 desired result. 
 \end{proof}

\subsection{Local persistence of the size of unstable manifolds}\label{subs:param2}
 Recall the notation  $\widehat{p} = 
 \set{(\la, p_\la), \ \la\in \La}$ for a holomorphically moving saddle point $p_\la$. 
 Also, let us  denote by $\tub(\widehat p, r)$ the fibered tubular  neighborhood of
  $\widehat p$ of radius $r$ in 
 $\La\times \cd$, defined by 
 $$ \tub(\widehat p, r)  = \set{(\la, z)\in \La\times \cd, \ \norm{z-p(\la)}<r}.$$

Let us first isolate a geometric lemma. For notational ease, we put $\cc^2_\la = \set{\la}\times \cd$. 

\begin{lem}\label{lem:smooth}
Let $\widehat p$ be the graph of a holomorphic mapping $p:\La\cv\cd$ that is uniformly bounded by $M$ 
on $\La$. Fix a direction $e\in \pp^1(\cc)$.
Fix domains  $\widetilde U$ and $U$ such that $\widetilde U \Subset U \Subset \La$
and 
assume that $\Sigma$ is a hypersurface that is closed in $\tub(\widehat p, r)\cap (U\times \cd)$ and such that 
for each $\la\in U$, $\Sigma \cap  \cc^2_\la$ 
is a graph of slope at most 1 over $p(\la)+ e$. Then $\Sigma$ is smooth and the volume of 
$\Sigma\cap \lrpar{\widetilde U\times \cd}$ is bounded 
by a constant depending only on $M$, $\widetilde U$, $U$ and $r$.
\end{lem}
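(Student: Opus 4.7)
The plan is to represent $\Sigma$ as the graph of a jointly holomorphic function over an open subset of $U\times\cc$. Smoothness will then be tautological, and the volume bound will follow from elementary derivative estimates. Choose orthonormal coordinates $(w,z)$ on $\cd$ with the $w$-axis along $e$, write $p(\la) = (p_w(\la), p_z(\la))$, and reformulate the slice hypothesis: for each $\la\in U$ there exists a holomorphic function $\psi_\la$ on an open set $D_\la\subset\cc$ with $|\psi'_\la|\le 1$, such that
\[
\Sigma\cap\cc^2_\la = \bigl\{(w,z):\ w - p_w(\la)\in D_\la,\ z = p_z(\la) + \psi_\la(w - p_w(\la))\bigr\}.
\]
Set $G(\la,w) = p_z(\la)+\psi_\la(w-p_w(\la))$ on the open set $\Omega=\{(\la,w)\in U\times\cc:\ w-p_w(\la)\in D_\la\}$, so that $\Sigma$ is set-theoretically the graph $\{(\la,w,G(\la,w)):(\la,w)\in\Omega\}$.

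For smoothness I would apply Hartogs' theorem on separate analyticity to $G$. Holomorphy in $w$ at fixed $\la$ is immediate. For fixed $w_0\in\cc$, consider $\Sigma_{w_0} := \Sigma\cap\{w = w_0\}$, a reduced one-dimensional analytic subset of $U\times\cc_z$ that projects set-theoretically bijectively onto the open set $U':=\{\la\in U:(\la,w_0)\in\Omega\}$. Injectivity of the projection rules out both multi-branched and ramified singularities of $\Sigma_{w_0}$, so its normalization $\widetilde{\Sigma_{w_0}}\to U'$ is a bijective holomorphic map of Riemann surfaces, hence a biholomorphism; consequently $\Sigma_{w_0}$ is the graph of a holomorphic function $\la\mapsto G(\la,w_0)$. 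Hartogs then gives joint holomorphy of $G$ on $\Omega$, and $\Sigma$ is the graph of a holomorphic function, hence smooth.

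For the volume bound, the volume form of a holomorphic graph $\{z=G(\la,w)\}\subset\cc^3$ is $(1+|\partial_\la G|^2+|\partial_w G|^2)\,dV(\la,w)$. The slope hypothesis gives $|\partial_w G| = |\psi'_\la| \le 1$, and since $\Sigma\subset\tub(\widehat p,r)$, we have $|G(\la,w) - p_z(\la)| < r$, hence $|G|\le M+r$ on $\Omega$. The Cauchy inequality on $\la$-disks of radius $d:=\dist(\widetilde U,\fr U)$ inside $U$ then yields $|\partial_\la G|\le (M+r)/d$ on $\widetilde U$. Finally, the $w$-section of $\Omega$ above any $\la\in\widetilde U$ lies in the disk $D(p_w(\la),r)$, so by Fubini the four-dimensional volume of $\Omega\cap(\widetilde U\times\cc)$ is at most $\vol(\widetilde U)\cdot\pi r^2$. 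Combining these estimates produces the asserted volume bound depending only on $M$, $r$, $\widetilde U$, and $U$.

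The main obstacle is the $\la$-holomorphy step of Hartogs' argument: passing from the set-theoretic bijection $\Sigma_{w_0}\to U'$ to an analytic statement requires ruling out cusp-type singularities, which is where the normalization argument plays its role. All other ingredients (Cauchy estimates and Fubini) are routine.
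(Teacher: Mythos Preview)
Your proof is correct and follows the same overall strategy as the paper: represent $\Sigma$ as the graph of a jointly holomorphic function $G$ over an open set $\Omega\subset U\times\cc$, then use the slope bound, the bound $|G|\le M+r$, and the Cauchy inequality in $\la$ to control the volume. The volume computation is essentially identical in the two arguments.

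The one genuine difference lies in how joint holomorphy of $G$ is established. The paper observes that the projection $\hat\pi|_\Sigma:\Sigma\to\hat\pi(\Sigma)$ is a holomorphic bijection from a pure-dimensional analytic set onto an open subset of $U\times\cc$, and invokes a general result (Chirka, Prop.~3 p.~32) asserting that such a map is automatically a biholomorphism. You instead unpack this in the present low-dimensional setting: slice by $w=w_0$ to get a curve $\Sigma_{w_0}$ projecting bijectively onto an open set in $\La$, pass to the normalization to deduce that this projection is a biholomorphism, and then glue the slices via Hartogs' theorem on separate analyticity. Your route is more self-contained (it avoids the black-box citation) at the cost of being slightly longer; conversely, the paper's route makes the underlying principle --- that an injective holomorphic map from an analytic set onto an open set of the same dimension is a biholomorphism --- explicit and reusable.

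One small point that both arguments leave implicit and that you may want to make precise: the openness of $\Omega=\hat\pi(\Sigma)$, which you need in order to apply Hartogs. This follows from the persistence of isolated intersections, since $\Sigma$ is locally the zero set of a holomorphic function $F$ and $z\mapsto F(\la,w,z)$ has an isolated zero whenever $(\la,w)\in\Omega$.
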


\begin{proof} Identify $e$ and the corresponding line through 0 in $\cd$. 
 For $\la \in U$, let 
  $\phi_\lambda$ be the unique holomorphic map
   from an open subset of $p(\la)+e$ to its orthogonal complement,  whose graph is $\Sigma \cap  \cc^2_\la$. 
 Let $\pi\colon  \mathbb C ^2 \to \mathbb C$ be the orthogonal projection on the line $e$, and let $\hat \pi(\lambda, z) = (\lambda,\pi(z))$. 
 
By continuity of the map $(\la,z)\mapsto \phi_\la(z)$, the following is an open subset of $U\times e\approx U\times \cc$.
\[\hat \pi(\Sigma)=  \{(\la,z)\in U\times e:\; |\phi_\la (z)|^2+|z|^2<r^2\}.\]
By the graph property, the map $\hat \pi\rest{\Sigma}$ is one-to-one from the subvariety $\Sigma$ onto the open subset $\hat \pi(\Sigma)$. 
Under these conditions it is  classical      that $\hat \pi\rest{\Sigma}$ 
is a biholomorphism (see Prop. 3 p. 32 in \cite{chirka}). In particular $\Sigma$ is smooth and the map $ (\la ,z)\in \hat \pi(\Sigma')\mapsto 
\phi_\la(z)$ is holomorphic in both variables (being the inverse of $\hat \pi$).

To get the volume bound, 
we remark that the set $\hat \pi(\Sigma)$ is bounded (specifically, it is  
contained in $U\times B(0,M+r)$). The derivative  $\partial_ z\phi_\la$ is bounded by 
$1$, and the image of $\phi_\la$ is bounded by $M+r$. The Cauchy estimate  implies that the derivative  $\partial_\la\phi_\la$ is bounded on $
\widetilde U$. Consequently the volume of  $\Sigma\cap \big(\widetilde U\times \cd\big)$ 
is bounded by a constant depending only on $M$, $\widetilde U$, $U$ and $r$. 
\end{proof}

The main result in this subsection is that in a weakly stable family, the size of a holomorphically moving unstable manifold is locally uniformly bounded from below.

 \begin{prop}\label{prop:surface}
Let $(f_\la)_{\la\in\La}$ be a weakly stable   substantial family   of polynomial automorphisms of $\cd$ and let
 $\widehat p = (p_\la)_{\la\in\La}$ be a holomorphically moving saddle point. Let $\widetilde\La\Subset \La$ be a relatively compact open subset and fix $\lo\in \widetilde\La$.
 Assume that  for $\la=\lo$,  $W^u(p_0)$ is of size $r_2$ at $p_0$.
 Then, for every $r_1<r_2$, there exists  $\delta  = \delta\big( r_1,r_2, \widetilde\La \big)$ depending only on $r_1$, $r_2$ 
 and $\widetilde{\La}$ such that if $\abs{\la-\lo}<\delta$, 
 $W^u(p_\la)$ is of size $r_1$ at $p_\la$, and $ W^u_{r_1}  (p_\la)$ is a graph of slope at most 1 over 
$p_\la+ E^u(p_0)$ (where $E^u(p_0)$ denotes the unstable direction at $p_0$).

 Furthermore, there exists a  submanifold $\widehat W^u_{r_1}$ in  $\tub(\widehat p, r_1) \cap (D(\lo,\delta)\times \cd)$,    
such that for every $\la\in D(\lo,\delta)$, 
$\widehat W^u_{r_1}\cap \mathbb{C}^2_\la = W^u_{r_1} 
 (p_\la)$, whose volume is bounded by a constant $V\big(r_1, r_2, \widetilde \La\big)$ 
 depending only on $r_1$,  $r_2$ and $\widetilde\La$.  
   \end{prop}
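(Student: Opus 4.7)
The plan is to exploit the natural continuation $(\psi^u_\la)_\la$ of an unstable parameterization $\psi^u_\lo$ normalized by $(\psi^u_\lo)'(0) = (1, 0)$ in coordinates with $p_0 = 0$ and $E^u(p_0) = \cc \times \set{0}$. The size-$r_2$ assumption translates into $u_\lo := \pi_1 \circ \psi^u_\lo$ being univalent on a domain $\Omega_0 \ni 0$ with $u_\lo(\Omega_0) = D(0, r_2)$, and $\psi^u_\lo = (u_\lo, \varphi_0 \circ u_\lo)$ on $\Omega_0$, where $\abs{\varphi_0'(x)} \le \abs{x}/r_2$ by Remark \ref{pentereduite}.

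The first step is to extend the uniform parameterization bound of Proposition \ref{prop:uniformparam} beyond $D(0, c r_2)$, which may fail to contain $u_\lo^{-1}(D(0, r_1 + \epsilon))$ when $r_1$ is close to $r_2$. Combining the Hölder continuity of $G^+_\lo$ (vanishing at $p_0$) with Lemma \ref{lem:harnack} applied to $V = W^u_{r_2}(p_0) \subset B(0, r_2\sqrt{2})$ shows that $h_\la(V) = \psi^u_\la(h^u_\la(\Omega_0))$ is uniformly bounded in $\cd$ by some $M' = M'(r_2, \widetilde\La)$. Fix a small $\epsilon > 0$ with $r_1 + \epsilon < r_2$ and set $K_\epsilon := u_\lo^{-1}(\overline{D(0, r_1 + \epsilon)}) \Subset \Omega_0$. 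For $\la$ close enough to $\lo$, the quasiconformal continuity of $h^u_\la$ ensures $K_\epsilon \subset h^u_\la(\Omega_0)$, so $\norm{\psi^u_\la} \le M'$ on $K_\epsilon$; Cauchy estimates then yield $C^\infty$ convergence of $\widetilde\psi^u_\la := \psi^u_\la - p_\la = (\widetilde u_\la, \widetilde v_\la)$ to $\psi^u_\lo = (u_\lo, v_\lo)$ on $K_\epsilon$ as $\la \to \lo$.

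Hurwitz combined with this convergence gives univalence of $\widetilde u_\la$ on $K_\epsilon^\circ$ for $\la$ near $\lo$, with image close to $u_\lo(K_\epsilon^\circ) = D(0, r_1 + \epsilon)$; in particular $\widetilde u_\la(K_\epsilon^\circ) \supset D(0, r_1 + \epsilon/2)$. Let $B_\la \Subset K_\epsilon^\circ$ be the connected component through $0$ of $\widetilde u_\la^{-1}(D(0, r_1 + \epsilon/2))$, so that $\abs{\widetilde u_\la} = r_1 + \epsilon/2$ on $\partial B_\la$. Setting $\Phi_\la := \widetilde v_\la \circ (\widetilde u_\la\rest{B_\la})^{-1}$, one has $\Phi_\lo = \varphi_0$, and the $C^1$ convergence together with $\abs{\varphi_0'(x)} \le r_1/r_2 < 1$ on $D(0, r_1)$ gives $\abs{\Phi_\la'} \le r_1/r_2 + \eta < 1$ on $D(0, r_1)$ for $\la$ close enough to $\lo$ (with $\eta > 0$ arbitrarily small). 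The graph $G_\la := \set{p_\la + (x, \Phi_\la(x)) : x \in D(0, r_1 + \epsilon/2)}$ is then a piece of $W^u(p_\la)$ with slope strictly less than $1$ over $p_\la + E^u(p_0)$.

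The crucial step is to identify $W^u_{r_1}(p_\la)$ with $G_\la \cap B(p_\la, r_1)$. Let $A_\la$ be the component of $(\psi^u_\la)^{-1}(B(p_\la, r_1))$ through $0$. Since $\abs{\widetilde u_\la} > r_1$ on both $\partial K_\epsilon$ and $\partial B_\la$, one has $\abs{\psi^u_\la - p_\la} \ge \abs{\widetilde u_\la} > r_1$ on these sets, and a path-connectedness argument (a path in $A_\la$ escaping $B_\la$ would have to cross $\partial B_\la$, which is disjoint from $A_\la$) forces $A_\la \subset B_\la$. This gives $W^u_{r_1}(p_\la) = \psi^u_\la(A_\la) \subset G_\la$; the reverse inclusion follows from connectedness of $G_\la \cap B(p_\la, r_1)$ through $p_\la$. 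A change of basis from $E^u(p_0)$ to the nearby direction $E^u(p_\la)$, absorbing the small angular rotation in the slope slack $1 - r_1/r_2$, yields the size-$r_1$ property at $p_\la$ with slope $\le 1$ in the tangent direction $E^u(p_\la)$. Finally, $\widehat W^u_{r_1} := \bigcup_{\abs{\la - \lo} < \delta} \set{\la} \times W^u_{r_1}(p_\la)$ is a closed hypersurface in $\tub(\widehat p, r_1) \cap (D(\lo, \delta) \times \cd)$ whose slices are graphs of slope $\le 1$ over $p(\la) + E^u(p_0)$; applying Lemma \ref{lem:smooth} on a slightly larger outer disk to accommodate the $\widetilde U \Subset U$ setup then provides smoothness and the stated volume bound. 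The main obstacle lies in this identification: it crucially relies on the extended uniform bound of the first step, because for $r_1$ close to $r_2$ the preimage $u_\lo^{-1}(D(0, r_1 + \epsilon))$ can extend well outside the domain $D(0, c r_2)$ of Proposition \ref{prop:uniformparam} alone.
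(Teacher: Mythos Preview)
Your approach follows the same skeleton as the paper's: take the natural continuation $(\psi^u_\la)$ of a normalized parameterization, use H\"older continuity of $G^+$ together with Lemma~\ref{lem:harnack} to get a uniform sup bound on $\psi^u_\la$ over (the image under $h^u_\la$ of) $\Omega_0$, pass to Cauchy estimates to perturb the slope bound from $\lo$ to nearby $\la$, and finish with Lemma~\ref{lem:smooth}. The paper carries this out with explicit Koebe distortion bounds (Lemma~\ref{lem:D12}) and works with the moving domains $h^u_\la(D_i)$; you instead fix the domain $K_\epsilon$ in the $z$-plane and replace the distortion bounds by soft convergence arguments (Hurwitz, $C^1$ convergence of $\Phi_\la$), together with a clean topological identification of $W^u_{r_1}(p_\la)$ inside the graph $G_\la$.

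There is, however, a genuine gap concerning the \emph{uniformity of $\delta$ in the saddle point $p$}. The whole point of the proposition (and the reason it is used later, e.g.\ in Step~1 of Theorem~\ref{thm:regular}) is that $\delta$ depends on $r_1,r_2,\widetilde\La$ only. As written, your argument does not deliver this. Each time you write ``for $\la$ close enough to $\lo$'' --- for the inclusion $K_\epsilon\subset h^u_\la(\Omega_0)$, for the univalence of $\widetilde u_\la$ via Hurwitz, for the $C^1$-closeness of $\Phi_\la$ to $\varphi_0$ --- the implied threshold a priori depends on the specific univalent map $u_\lo$, hence on $p$. Concretely: your Cauchy estimates require the bound $\|\psi^u_\la\|\le M'$ on a fixed $(\la,z)$-neighborhood of $\{\lo\}\times K_\epsilon$, but the bound is only known on the moving set $h^u_\la(\Omega_0)$, so you need a uniform lower bound on $\dist(K_\epsilon,\partial\Omega_0)$. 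Similarly, turning uniform $C^0$-closeness of $\widetilde u_\la$ to $u_\lo$ into univalence of $\widetilde u_\la$ with a $p$-independent threshold requires a uniform lower bound on $|u_\lo'|$ on $K_\epsilon$. Both of these are precisely Koebe distortion statements, and this is why the paper isolates Lemma~\ref{lem:D12}: the estimate $\dist(D_1,\partial D_2)\ge r_2(1-r_1/r_2)^2/32$ and the two-sided bound on $|((\pi_0\circ\psi^u_0)^{-1})'|$ are what make all the subsequent Cauchy-type bounds uniform in $p$. Once you insert this Koebe input, your argument goes through and is essentially equivalent to the paper's.
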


\begin{proof} 
Start with an  unstable parameterization  $\psi^u_0$ of $W^u(p_\lo)$ satisfying $\norm{(\psi^u_0)'(0)} =1$, and 
let $\pi_0$ be the orthogonal projection onto $E^u(p_0)$. For $i=1,2$, we  
 denote by  $D_i = (\pi_0\circ \psi_0^u)^{-1}\lrpar{ D\lrpar{0, r_i}}$,  and let 
 $D_{12} := (\pi\circ \psi_0^u)^{-1}\lrpar{ D\lrpar{0, r_{12}}}$, with $r_{12}:= (r_1+r_2)/2$.
 These are simply connected domains in   $\cc$ containing the origin, satisfying $D_1\Subset D_{12}\Subset D_2$. 
 
 \medskip
 
The following lemma will be proved afterwards.

\begin{lem}\label{lem:D12}
For every $z\in D(0, r_{12})$, 
the following derivative estimate holds: 
\begin{equation}\label{eq:D12}
\frac{1-r_1/r_2}{16}\leq  \abs{  \lrpar{ \lrpar{\pi_0\circ \psi^u_0}^{-1}}'(z)}\leq \frac{16}{(1-r_1/r_2)^3}.
\end{equation}
Moreover the  distance between  $D_1$ and $\fr D_2$ is greater than $  r_2 (1-r_1/r_2)^2/32$.
\end{lem}

Let now $(\psi^u_\la)_{\la\in\La}$ be the natural continuation of $\psi^u_0$. The second assertion of 
  Lemma \ref{lem:D12} together with \eqref{eq:holmotion}   imply that 
 there exists $\delta=\delta\big( r_1,r_2, \widetilde\La \big)$ such that if $\abs{\la-\lo}<\delta$,  $h_\la^u(D_1)$ 
stays uniformly far from $\fr (h_\la(D_2))$ (farther than $  r_2 (1-r_1/r_2)^2/50$, say) relative to the Euclidean metric on $\cc$.	
 Furthermore, arguing exactly as in Proposition \ref{prop:uniformparam}, we see 
 that for $\abs{\la- \lo}<\delta$, $\psi^u_\la(h_\la^u(D_2)))$ is uniformly bounded in $\cd$.
    
      Let $\Psi:\La\times \cc \cv\La\times \cd$ be defined 
  by $\Psi(\la, z) = (\la, \psi^u_\la(z))$, and put $$\widehat D_i = \bigcup_{\la\in B(\lo, \delta)}\set{\la}\times h_\la^u(D_i), \text{ for } i=1,2.$$
   Since $h_\la^u( D_1)$ stays far from $\fr( h_\la^u(D_2))$, and  
    $\Psi\big(\widehat D_2\big)$ is uniformly bounded in 
   $B(0, \delta)\times \cd$,    by the Cauchy estimates, reducing $\delta$ 
   again  slightly if necessary    the derivatives of   $\Psi$ are uniformly bounded on $\widehat D_1$, with  bounds depending only on $\widetilde \La$, $ r_1$ and $r_2$. 
   
    \medskip

We are now ready to conclude the proof. Let $\pi_\la$ (resp. $\pi_\la^\bot$)
 be the orthogonal projection onto $E^u(p_\la)$ (resp.  $(E^u(p_\la))^\bot$).
The curve $W^u(p_\la)$ is of size $r_1$ at $p_\la$ if for every $z$ 
in $D_1^\la :=\{z\in \cc,\; |\pi_\la \circ \psi^u_\la(z)|<r_1\}$ the following estimate holds:  
\begin{equation}\label{pente} |\partial_z(\pi_\la^\bot\circ \psi^u_\la)(z)|\le |\partial_z(\pi_\la\circ \psi^u_\la)(z)|.\end{equation}

By the Cauchy estimate on $\partial_\la \partial_z \Psi$,  $(\psi_\la^u)'(0)$ is close to  $(\psi^u_0)'(0)$ for $|\la-\lo|\le \delta$. In particular 
choosing $\delta = \delta(r_1, r_2, \widetilde\La)$ small enough we can ensure that $\norm{\pi_0 - \pi_\la}\leq \e$ (resp. 
$\norm{\pi_0^\bot - \pi_\la^\bot}\leq \e$), where  $\e$ is as small as we wish.  
By the Cauchy estimate on $\partial_\la \Psi$, for $\delta=\delta(r_1,r_2, \widetilde \La)$ sufficiently small, when   $|\la-\lo|\le \delta$, the set $D_1^\la$ is included in $D_{12} = (\pi\circ \psi_0^u)^{-1}\lrpar{ D\lrpar{0, r_{12})}}$, with $r_{12}:= (r_1+r_2)/2$. 
By Remark \ref{pentereduite}, for every $z\in  D_{12}$, we have that
\[ |{\partial_z(\pi_0^\bot\circ \psi^u_0)(z)}|\le \frac{r_1+r_2}{2 r_2} \abs{\partial_z(\pi_0\circ \psi^u_0)(z)}
 = \lrpar{1-\frac{r_2-r_1}{2r_2}}  \abs{\partial_z(\pi_0\circ \psi^u_0)(z)}.\]
 From this we infer that with $\e$ as above  and $z\in  D_{12}$,
  \begin{align}
   |{\partial_z(\pi_\la^\bot\circ \psi^u_\la)(z)}|\le \lrpar{1-\frac{r_2-r_1}{2r_2}}  & \abs{\partial_z(\pi_\la\circ \psi^u_\la)(z)} \label{eq:relou}  \\
  &+ \e \norm{\fr_z \psi_0^u(z)} + \e \norm{\fr_z \psi_\la^u(z)} + 2 \norm{  \fr_z (\psi_0^u - \psi_\la^u) (z)} \notag.
  \end{align}
 In addition, the right hand inequality in \eqref{eq:D12} implies that for $z\in D_{12}$, 
 $$\abs{\partial_z(\pi_0\circ \psi^u_0)(z)} \geq \frac{(1-r_1/r_2)^3}{16}.$$
By the Cauchy estimate on $\partial_\la \partial_z \Psi$, for $\delta= \delta(r_1,r_2, \widetilde \La)$ sufficiently small, 
a similar estimate holds for $\partial_z(\pi_\la\circ \psi^u_\la)(z)$ (with 16 replaced by 32, say) for 
 $|\la-\lo|\le \delta$ and $z\in D_{12}$. Recall that under our assumptions  $D_{12}$ contains  $D_1^\la$. Thus,  
  by choosing 
$\e = \e(r_1, r_2, \widetilde \La)$ appropriately and reducing $\delta$ again if necessary, we can ensure that 
for $z\in D^\la_1$, 
$$ \e \norm{\fr_z \psi_0^u(z)} + \e \norm{\fr_z \psi_\la^u(z)} + 2 \norm{  \fr_z (\psi_0^u - \psi_\la^u) (z)} \leq 
\frac{r_2-r_1}{2r_2} \abs{\partial_z(\pi_\la\circ \psi^u_\la)(z)},$$ which by \eqref{eq:relou} yields \eqref{pente}.
 
Finally, we define  $\widehat W^u_{r_1}$ to be the connected component of $\Psi\big(\widehat D_1\big)$ in   $\tub(\widehat p, r_1) \cap (D(\lo,\delta)\times \cd)$ containing $\widehat p$, which is a surface with the desired properties (its smoothness follows from Lemma \ref{lem:smooth}).
  \end{proof}
  
\begin{proof}[Proof of Lemma \ref{lem:D12}]
By the Koebe Distortion Theorem   (see \cite[Thm 5-3]{ahlfors}), if $g:\dd\cv\cc$ is a univalent mapping with $g'(0) =1$, then for every 
for every $r<1$ and every $z\in D(0,(1+r)/2)$ we have that 
$$  \frac{1-r}{16} \leq  \frac{1-|z|}{(1+|z|)^3} \leq |g'(z)| \leq  \frac{1+|z|}{(1-|z|)^3}\leq \frac{16}{(1-r)^3}$$
Applying this to   $g(z)=r_2^{-1}(\pi\circ \psi_0^u)^{-1}( r_2 z)$ and 
$r =r_1/r_2$, we deduce the desired bound on $((\pi\circ\psi^u_0){-1})'$. The estimate on the distance from 
 $D_1$ to $\fr D_2$    immediately follows. 
 \end{proof}


\begin{rmk}
One may wonder why we did not conclude to the existence of such a submanifold $\widehat W^u$ in $\widetilde\La\times \cd$ 
straight after 
Proposition \ref{prop:uniform}, using  a ``fibered" 
compactness argument in the style of  Lemma \ref{lem:compactness}.

The trouble is that in this general situation, having information about the area of $W^u_r(p_\la)$ 
is not sufficient to control the geometry (say, the volume) of $\widehat W^u$  because 
$\widehat W^u\cap( \set{\la}\times B(p_\la,r))$ can get disconnected for some values of $\la$, and the geometry of components 
other than $W^u_r(p_\la)$ can go out of control. Proposition \ref{prop:surface} shows that this phenomenon 
does not occur in some neighborhood of $\lo$, depending on the size of $W^u_{\rm loc}(p)$.  

\medskip

Let us briefly describe an explicit example where this phenomenon happens. 
Let $\phi:\dd\times \dd \times \cv \cd$ be defined by 
$\phi(\la, z)  = z(z-2\la) (z,g(z))$, where $g$ is a holomorphic function on $\dd$ such that
 $\abs{g}<1$ but $\int_\dd \abs{g'}^2 = \infty$.  
By  Lemma \ref{lem:geometry}, there exists positive constants $r$ and $A$
 such that for every $\la\in D(0, 3/4)$, the connected component $W_\la$
of $\phi(\la, \dd)\cap B(0,r)$ containing 0 is properly embedded and of area at most $A$. 

Now let  $\Phi: \dd\times \dd \cv \dd\times \dd^2$ be defined by  
$\Phi(\la, z)= (\la, \varphi(\la, z))$. We see that $\Phi(\dd\times\set{0}) = \dd\times \set{0}$. 
With  $r$ as above, consider the component $\widehat W$ of 
$\Phi(\dd^2)\cap \tub_r(\dd\times \set{0})$ containing $ \dd\times \set{0}$. 
Put  $V = \set{(\la,z), \; z(z-2\la) = 0}\subset \dd^2$, and observe that 
$\Phi(V) = \dd\times \set{0}$. 
Now it is easily shown that $\widehat{W}$ contains $\Phi(\tub_{r/4}(V))$. So when $\abs{\la}$ is close to 1/2, 
$\widehat{W} \cap  \dd^2_\la$ is made of at least two irreducible components, and for  the values of $\la$ such that 
$\int_{D(\la, r/4)} \abs{g'}^2 = \infty$, one of these is of  infinite volume. \qed
 \end{rmk}

\section{Holomorphic motion of regular points}\label{sec:regular}

In this section we introduce the concept of regular point for a polynomial automorphism of $\cd$, and 
prove Theorem \ref{theo:pesin strong}, in a slightly more general form. 

\subsection{Definitions and main statements}
\begin{defi}\label{defi:usregular}
We say that $p\in J^*$  is \emph{u-regular} (resp.  \emph{s-regular}) if there exists 
$r>0$ and a sequence of saddle periodic points $p_n$ converging to $p$,  with the property that $W^u(p_n)$ (resp. $W^s(p_n)$) 
 has bounded  geometry at scale  $r$  at $p_n$.
\end{defi}

If necessary, we make the size appearing in the definition explicit by speaking of ``u-regular point of size $r$".
The key property of u-regular (resp. s-regular)
 points is that they possess ``local unstable (resp. stable) manifolds", as the following proposition shows.

\begin{prop}\label{prop:cvsize}
  Let $f$ be a polynomial automorphism of $\cd$ with dynamical degree $d\geq 2$.
Let $p$ be a u-regular point of size $r$. 
  Then there exists a unique submanifold $W^u_r(p)$ of size $r$ at $p$ 
  such that  if $(p_n)$ is any   sequence of saddle 
  points   converging to  $p$, 
    such that   $W^u(p_n)$ is of size $r$ at $p_n$, the sequence of disks 
    $(W^u_r(p_n))$ converges to $W^u_r(p)$ with multiplicity 1 in $B(p,r)$. 
  In particular the unstable directions converge as well. 
  \end{prop}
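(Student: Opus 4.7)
\noindent\textbf{Proof plan for Proposition~\ref{prop:cvsize}.}

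The plan is to extract a subsequential limit by compactness, identify it as a smooth disk of size~$r$, and then establish uniqueness through a dynamical argument.

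For existence, note that each $W^u_r(p_n)$ is a graph of slope at most one over a disk of radius $r$ in $T_{p_n}W^u(p_n)$, and hence has area bounded by a constant depending only on~$r$. Bishop's Theorem~\ref{thm:bishop} then yields, after extracting a subsequence, Hausdorff convergence to a pure one-dimensional analytic subset $W\subset B(p,r)$. Passing to a further subsequence, the tangent directions $T_{p_n}W^u(p_n)$ converge in $\pp^1(\cc)$ to some direction~$E$. For $n$ large, $W^u_r(p_n)$ is then a graph over $p+E$ on a disk of radius slightly smaller than $r$ with slope slightly greater than one; by Montel's theorem the corresponding holomorphic functions converge locally uniformly, so $W$ is a smooth disk of size~$r$ at $p$ tangent to $E$. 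The convergence is of multiplicity one, since $W^u_r(p_n)$ and $W$ are simultaneously graphs over the common direction $E$ in a tubular neighborhood of any regular point of~$W$; irreducibility of the limit also follows from Proposition~\ref{prop:disk}.

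The main task is uniqueness, which, besides showing that $W^u_r(p)$ is well defined, will also give convergence of the entire sequence $(W^u_r(p_n))$ together with convergence of the tangent directions. Suppose for contradiction that two subsequences $(p_{n_k})$ and $(p_{n'_l})$ produce distinct limits $W_1$ and $W_2$. Both are irreducible one-dimensional subvarieties of $B(p,r)$ containing $p$, so $W_1\cap W_2$ is of pure dimension zero near $p$ and contains $p$; in particular $W_1$ and $W_2$ meet properly at~$p$. By the persistence of proper intersections \cite{chirka} (as used in the proof of Proposition~\ref{prop:cv}), the approximating disks $W^u_r(p_{n_k})$ and $W^u_r(p_{n'_l})$ then must meet near~$p$ for $k, l$ large.

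This contradicts the following dynamical disjointness: if $x\in W^u(q)\cap W^u(q')$ for two distinct saddle periodic points $q, q'$ of periods $\kappa, \kappa'$, then the single sequence $(f^{-n\kappa\kappa'}(x))_n$ would converge to both $q$ and $q'$, which is impossible. Hence whenever $p_{n_k}\neq p_{n'_l}$, the global unstable manifolds $W^u(p_{n_k})$ and $W^u(p_{n'_l})$, and a fortiori their local pieces $W^u_r(p_{n_k})$ and $W^u_r(p_{n'_l})$, are disjoint; if instead $p_{n_k}=p_{n'_l}$ then the local unstable manifolds coincide and $W_1=W_2$ trivially. The desired contradiction follows. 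The main obstacle lies precisely in this uniqueness step, whose resolution combines the purely dynamical disjointness of distinct unstable manifolds with the classical persistence of proper intersections in complex analytic geometry.
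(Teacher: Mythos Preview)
Your proof is correct and follows essentially the same route as the paper's, which simply invokes Proposition~\ref{prop:cv} after noting that every subsequential limit is smooth and irreducible (via the graph property over a common limiting direction, exactly as you argue); you have in effect inlined that proposition's proof and made explicit the dynamical disjointness of unstable manifolds of distinct saddles that the paper leaves tacit. One small wrinkle: the clause ``if instead $p_{n_k}=p_{n'_l}$ then \ldots\ $W_1=W_2$ trivially'' is not justified by a single coincidence, but since persistence of proper intersections gives $W^u_r(p_{n_k})\cap W^u_r(p_{n'_l})\neq\emptyset$ for \emph{all} large $k,l$, the tails of both subsequences would have to equal a single saddle point, whence indeed $W_1=W_2$.
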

  
  By definition  $W^u_r(p)$ will be referred to as the {\em local unstable manifold} of $p$ (and likewise for s-regular points). 
  If the size $r$ is not relevant (i.e. if we think of the local unstable manifold as a germ)
   we simply refer to it as $W^u_{\rm loc}(p)$.
  Let us stress that  we do \emph{not}
 claim that $W^u_{\rm loc}(p)$ is an unstable manifold in the usual sense. 
  
  \begin{proof}
 Fix $r'<r$. Then for $n\geq N(r')$, $W^u_r(p_n)\cap B(p,r')$ is a closed submanifold  in $B(p,r')$.
 Given any subsequence   $W^u_r(p_{n_j})$, up to further extraction  we may assume that 
 the $W^u_r(p_{n_j})$ are graphs of slope at most 2 over a fixed direction. It follows that 
  all cluster values of the sequence 
  $(W^u_r(p_n))$ are smooth, irreducible and of multiplicity 1. 
From Proposition \ref{prop:cv} we infer that  this sequence actually converges and the proof is complete.
   \end{proof}
  
  \begin{defi}\label{defi:regular}
We say that $p\in J^*$ is  \emph{regular} if it is both s- and u-regular and if its  local stable and unstable manifolds do not coincide at $p$. If  in addition these local stable and unstable manifolds are transverse, we say that $p$ is transverse regular. 
\end{defi}
  
Examples of transverse regular points include saddle periodic points, as well as  transverse homoclinic intersections (due to Smale's 
horseshoe construction). It follows from Katok's Closing Lemma that if $\nu$ is any hyperbolic ergodic
invariant probability measure (that is, whose Lyapunov exponents satisfy $\chi^-(\nu)<0<\chi^+(\nu)$), 
then $\nu$-a.e. point is transverse regular in the sense of Definition \ref{defi:regular}. 

Let us introduce a  weaker notion of regularity, which involves the stable direction only.

\begin{defprop}\label{defi:exposed}
Let $p\in J^*$ be a s-regular point. We stay that $p$ is \emph{s-exposed} if one of the following 
equivalent properties is satisfied:
\begin{itemize}
\item[$(i)$] $W^s_\loc(p)$ is not contained in $K$;
\item[$(ii)$] $G^-\rest{W^s_\loc(p)} \not\equiv 0$;
\item[$(iii)$] $T^-\wedge [W^s_\loc(p)] >0$;
\item[$(iv)$] for every saddle point $q$, the manifold $W^u(q)$ admits  transverse intersections with  $W^s_\loc(p)$.
\end{itemize}
 \end{defprop}
 
\begin{proof}
The equivalence between $(i)$,   $(ii)$ and $(iii)$ is clear. To see that $(iv)$ implies $(i)$, it suffices to notice that by the inclination lemma, a small neighborhood of $W^u(q)\cap W^s_\loc(p)$ in $W^s_\loc(p)$ cannot be included in $K$.
 The fact that $(iii)$ implies $(iv)$ follows from the techniques of
 \cite[\S 9]{bls}. The precise statement is that if $\Delta$ is any holomorphic disk 
 such that $T^-\wedge [\Delta] >0$, then $\Delta$ admits transverse intersection with 
 $W^u(q)$.  The case where $\Delta$ is contained in a stable manifold is  explained 
  in detail in \cite[Lemma 5.1]{dl}. The proof for a general holomorphic disk $\Delta$ is identical.
\end{proof}
 %

\begin{prop}
If $p\in J^*$ is regular, then it is s- and u-exposed. 
\end{prop}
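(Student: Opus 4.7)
The plan is to establish s-exposure; u-exposure follows by the symmetric argument. I would verify characterization $(iv)$: that $W^u(q)$ admits a transverse intersection with $W^s_{\rm loc}(p)$ for every saddle $q$.

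First, using the u-regularity of $p$, I would pick saddles $q_n\to p$ with $W^u(q_n)$ of size $r$ at $q_n$, so that Proposition~\ref{prop:cvsize} gives $W^u_r(q_n)\to W^u_{\rm loc}(p)$ smoothly with multiplicity one. By the regularity hypothesis, $W^u_{\rm loc}(p)\neq W^s_{\rm loc}(p)$, so these two distinct irreducible smooth holomorphic disks through $p$ intersect properly at $p$ with some multiplicity $m\geq 1$; persistence of proper intersections of analytic sets (prop.~2 p.~141 and cor.~4 p.~145 in \cite{chirka}) then yields a nonempty intersection $W^u_r(q_n)\cap W^s_{\rm loc}(p)$ near $p$ for $n$ large. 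For an arbitrary saddle $q$, since $q_n\in J^*\subset\overline{W^u(q)}$, the inclination (or $\lambda$-) lemma applied at the saddle $q_n$ produces holomorphic disks in $W^u(q)$ arbitrarily $C^1$-close to $W^u_r(q_n)$, and hence by a diagonal extraction to $W^u_{\rm loc}(p)$; persistence of proper intersection then transfers the intersection with $W^s_{\rm loc}(p)$ to $W^u(q)$ itself.

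The remaining issue is transversality. If $m=1$, i.e.\ $W^u_{\rm loc}(p)$ is transverse to $W^s_{\rm loc}(p)$ at $p$, then the transverse intersection persists under smooth/holomorphic perturbation and the argument concludes immediately.

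The tangent case $m\geq 2$ is what I expect to be the main obstacle, since persistence of proper intersection gives intersection points but not necessarily transverse ones. To resolve it, I would also invoke the s-regularity of $p$ to choose saddles $p_n\to p$ with $W^s_r(p_n)\to W^s_{\rm loc}(p)$ smoothly with multiplicity one, and apply the density of transverse heteroclinic intersections in $J^*$ (Bedford--Lyubich--Smillie) to the pairs $(q,p_n)$: this yields transverse intersection points $y_n\in W^u(q)\pitchfork W^s(p_n)$ with $y_n\to p$, and the bounded-geometry size conditions at $p_n$ place $y_n$ in the local piece $W^s_r(p_n)$ for $n$ large. The multiplicity-one smooth convergence $W^s_r(p_n)\to W^s_{\rm loc}(p)$, combined with the fact that $W^u(q)$ is a fixed holomorphic curve along which the tangent directions at $y_n$ converge (by $\lambda$-lemma applied at $q_n$) to $T_p W^u_{\rm loc}(p)\neq T_pW^s_{\rm loc}(p)$ whenever $y_n$ is actually transverse to $W^s_r(p_n)$, lets one pass to a limit and obtain a transverse intersection of $W^u(q)$ with $W^s_{\rm loc}(p)$, completing the verification of~$(iv)$.
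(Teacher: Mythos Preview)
Your transverse case ($m=1$) is fine, and in fact already contains everything you need: once you have a single saddle $p_n$ with $W^u(p_n)$ meeting $W^s_{\rm loc}(p)$ transversally, the argument of $(iv)\Rightarrow(i)$ in Definition--Proposition~\ref{defi:exposed} (via the inclination lemma) already gives $W^s_{\rm loc}(p)\not\subset K$, which is $(i)$. There is no need to verify $(iv)$ for \emph{every} saddle $q$; the detour through the $\lambda$-lemma at $q_n$ and the diagonal extraction is unnecessary.

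The tangent case ($m\ge 2$) in your proposal has genuine gaps. First, density of $W^u(q)\pitchfork W^s(p_n)$ in $J^*$ lets you choose $y_n$ close to $p$, but $W^s(p_n)$ is dense in $J^+$ and has many pieces passing near $p$; there is no reason the heteroclinic point $y_n$ should lie on the \emph{local} piece $W^s_r(p_n)$, so your ``bounded-geometry size conditions'' do not place it there. Second, even granting $y_n\in W^s_r(p_n)$, your claim that $T_{y_n}W^u(q)\to T_pW^u_{\rm loc}(p)$ via the $\lambda$-lemma at $q_n$ is unjustified: $y_n$ is an arbitrary point of $W^u(q)$ near $p$, with no link to the disks in $W^u(q)$ that accumulate on $W^u_r(q_n)$. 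Transversality is open, not closed, so a limit of transverse intersections need not be transverse without such tangent control.

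The paper handles the tangent case much more directly. It works with the saddles $p_n\to p$ coming from u-regularity and observes that (after discarding at most one term) $p\notin W^u(p_n)$, so the disks $W^u_r(p_n)$ are \emph{disjoint} from $W^u_r(p)$ while converging to it. A lemma of Bedford--Lyubich--Smillie \cite[Lemma~6.4]{bls} then guarantees that $W^u_r(p_n)$ meets $W^s_{\rm loc}(p)$ \emph{transversally} near $p$ for large $n$: the disjointness from $W^u_r(p)$ is precisely what forces the tangency with $W^s_{\rm loc}(p)$ to unfold into transverse intersections. This replaces your entire tangent-case argument by a single clean invocation.
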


\begin{proof}
It is enough to prove that $p$ is s-exposed. Let $(p_n)$ be a sequence of saddle points with $W^u(p_n)$ of size $r$ at $p_n$ converging to $p$. We assume that the sequence $(p_n)$ takes infinitely many values, the remaining case is easy and left to the reader. Then removing at most one term to this sequence we may assume that for every $n$, $p\notin W^u(p_n)$.  
We claim that for large $n$, $W^u_r(p_n)$ intersects transversally $W^s_{\rm loc}(p)$ at a point close to $p$.
 If $W^u_{\rm loc}(p)$ and  $W^s_{\rm loc}(p)$ are transverse this is clear. If not, since $W^u_r(p_n)\cap W^u_r(p)= \emptyset$, this follows from \cite[Lemma 6.4]{bls}. 
In any case, arguing
 as in the implication {\em (iv)}$\Rightarrow${\em (i)} of Proposition \ref{defi:exposed} 
we conclude that $W^s_\loc(p)$ is not contained in $K$ and we are done.  
\end{proof}

Here is a basic example:

\begin{exam}
If $p$ is a saddle point and $q$ belongs to  the boundary of  
$W^s(p)\cap K^-$  relative to the intrinsic topology of $W^s(p)$, then $q$ is 
 s-regular and exposed. 
Indeed it is shown in \cite[Lemma 5.1]{dl} that $q$ is the limit of a sequence of 
homoclinic intersections $(t_n)$, thus $q$ is exposed inside $W^s(p)$. 
Furthermore if $\Delta\subset W^s(p)$ is any  disk containing $p$ and $q$, 
 it follows from Smale's horseshoe construction that for every $n$, $t_n$ is a limit of a sequence of saddle points $(p_{n,k})_k$
 whose stable manifolds are graphs over $\Delta$. 
 By considering the diagonal sequence $p_{n,n}$ we conclude that $q$ is s-regular, as desired.\qed
\end{exam}

Also there are examples of points which are s-regular and exposed but a priori not regular:

\begin{exam}\label{exam:zero}
Let $f$ be a dissipative polynomial automorphism. 
Let $m$ be an ergodic probability measure supported on $J^*$ with the property that $m= \lim m_n$, where  for each $n$, $m_n$ is a 
probability measure equidistributed on a set of non-attracting periodic orbits (that is, saddle or semi-neutral). Since $f$ is dissipative, the 
negative Lyapunov exponent of $m_n$   satisfies $\chi^-(m_n) \leq \log\abs{\jac(f)} <0$, and likewise for $m$. 
On the other hand we make no assumption on the remaining (non-negative) Lyapunov exponent.

 Then it is possible\footnote{Details will appear in  subsequent work.} to adapt 
the techniques of Wang-Young \cite[\S 2]{WY} (see also Benedicks-Carleson \cite{BC91}) to show that if the Jacobian is sufficiently small, 
by the Pliss Lemma
there exists a set of periodic points $A_r$ such that  $m_n(A_r)\geq 1/2$ for each $n$,
 and such that for every $p\in A_r$ the local
 stable manifold of $p$ is of size $r$. 
 Thus the same holds for $m$, and by ergodicity we conclude that $m$-a.e. point is s-regular. Furthermore, since in this case the local stable manifolds obtained by Proposition \ref{prop:cvsize}  coincide with Pesin stable 
 manifolds, it follows that $m$-a.e. point is s-exposed. 
 
An interesting   example of such a situation is given by   the unique invariant probability measure supported on 
   the attractor of an infinitely renormalizable Hénon map (see \cite{clm}).  \qed
\end{exam}

Here is a more precise version of Theorem \ref{theo:pesin strong}.  

\begin{thm}\label{thm:regular}
Let $(f_\la)_{\la\in \La}$ be a weakly stable substantial family of polynomial automorphisms of $\cd$ of dynamical degree $d\geq 2$. 
If for some parameter $\lo$, $p_0\in J^*_\lo$ is  s-regular and exposed
for $f_\lo$, then there exists a unique holomorphic 
mapping  $\la\mapsto p(\la)$ such that  for every $\la$, $p(\la)\in K_\la$ and $p(\lo)=p_0$. Moreover,  for every $\la\in \La$, $p(\la)$ is 
s-regular and exposed. In particular the branched holomorphic motion of $J^*$ is unbranched along the curve $(\la, p(\la))$.
 \end{thm}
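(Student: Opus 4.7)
The plan is to build $p(\la)$ as a limit of continuations of nearby saddles selected by s-regularity, and to use the center stable manifold from Section \ref{sec:size} as a codimension $1$ analytic barrier in $\La\times\cd$ that rules out competing continuations. Concretely, I would first choose saddles $(p_{0,n})_n$ converging to $p_0$ with $W^s(p_{0,n})$ of uniform size $r$ at $p_{0,n}$, as provided by s-regularity. By weak stability and Theorem \ref{thm:unbranched} each $p_{0,n}$ has a unique holomorphic continuation $p_n(\la)$ as a saddle throughout $\La$. For any fixed $r_1<r$ and $\widetilde\La\Subset\La$, Proposition \ref{prop:surface} then yields a uniform $\delta>0$ (independent of $n$) together with smooth hypersurfaces $\widehat W^s_{r_1}(p_n)\subset \tub(\widehat{p_n},r_1)\cap (D(\lo,\delta)\times\cd)$ of uniformly bounded volume. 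Since distinct saddles have disjoint stable manifolds, these hypersurfaces are pairwise disjoint, and Bishop's Theorem combined with Proposition \ref{prop:cv}---applied after a hypersurface analog of Proposition \ref{prop:disk} secures local irreducibility of cluster values at $(\lo,p_0)$---yields a unique limit $\widehat W^s$, a codimension $1$ analytic set through $(\lo,p_0)$. Extracting a convergent subsequence $p_{n_k}\to p$ by Montel (the $p_n$ being uniformly bounded in $K$) produces a holomorphic motion with $p(\lo)=p_0$ and $p(\la)\in J^*_\la\subset K_\la$; Proposition \ref{prop:cvsize} applied fiberwise identifies $\widehat W^s\cap\cd_\la=W^s_{r_1}(p(\la))$ and shows that $p(\la)$ is s-regular for every $\la\in D(\lo,\delta)$. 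Global propagation to all of $\La$ follows by iterating the argument along a chain of disks; s-exposedness persists from $\lo$ thanks to the robustness of transverse intersections of $W^s_{r_1}(p(\la))$ with $W^u(q(\la))$, for a saddle $q$ witnessing exposedness at $\lo$, whose motion is unique by Theorem \ref{thm:unbranched}.

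For uniqueness, I would take any other holomorphic motion $\la\mapsto p'(\la)$ with $p'(\lo)=p_0$ and $p'(\la)\in K_\la$ and aim to prove $p'=p$. The first step is to show $\widehat{p'}\subset \widehat W^s$ in a neighborhood of $(\lo,p_0)$: otherwise the graph $\widehat{p'}$ would meet $\widehat W^s$ properly at $(\lo,p_0)$ (intersection dimension $1+2-3=0$), and by persistence of proper intersections \cite{chirka} combined with the convergence $\widehat W^s_{r_1}(p_n)\to\widehat W^s$, $\widehat{p'}$ would cross every nearby $\widehat W^s_{r_1}(p_n)$ at some parameter $\lambda_n\to\lo$, placing $p'(\lambda_n)\in W^s_{r_1}(p_n(\lambda_n))$; a Hurwitz argument on local defining functions of $\widehat W^s$, together with the contraction of forward orbits on stable manifolds of saddles, will contradict $p'(\lambda_n)\to p_0$. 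Once $\widehat{p'}\subset \widehat W^s$, both motions are holomorphic sections of the same codimension $1$ variety through $(\lo,p_0)$, and I would exploit s-exposedness to produce a family of transverse homoclinic or heteroclinic intersections $t_m\in W^s_{r_1}(p_0)\cap W^u(q_m)$ accumulating at $p_0$, each with a unique continuation $t_m(\la)$ by Theorem \ref{thm:unbranched}. A Ma\~n\'e--Sad--Sullivan type $\la$-lemma, applied inside the one-dimensional fibers of $\widehat W^s$, should propagate the rigid motion of $\{t_m\}$ to its closure, pinning down the section through $(\lo,p_0)$ and forcing $p'=p$. The unbranching of the motion of $J^*$ along $(\la,p(\la))$ then follows, since any competing branch of the branched motion of $J^*$ would furnish such a $p'\in K$.

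The main obstacle will be this uniqueness step, specifically selecting the correct section of $\widehat W^s$ through $(\lo,p_0)$: the codimension $1$ barrier alone only confines competing motions to a two-dimensional analytic set, and eliminating all but one section requires genuinely exploiting s-exposedness to obtain a dense rigid family of continuable points on $W^s_{r_1}(p_0)$ and then invoking a $\la$-lemma rigidity statement within $\widehat W^s$. Making this one-dimensional holomorphic-motion argument precise inside the (a priori only analytic, possibly non-smooth) hypersurface $\widehat W^s$---and in particular controlling uniform quasiconformality of the extension across fibers of the codimension $1$ variety---is where the bulk of the complex-analytic work should lie.
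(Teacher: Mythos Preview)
Your local argument near $\lo$---building the limit hypersurface $\widehat W^s$ from the $\widehat W^s_{r_1}(\widehat{p}_n)$ and confining any competing continuation $\widehat{p'}$ to it---is essentially the paper's Step~1. Two points there can be simplified. First, the reason $\widehat{p'}$ cannot cross $\widehat W^s_{r_1}(\widehat{p}_n)$ is not the ``contradiction with $p'(\la_n)\to p_0$'' you sketch: rather, if $p'(\la)\in W^s(p_n(\la))$ for one $\la$ then iterating forward and invoking Theorem~\ref{thm:unbranched} forces this for \emph{all} $\la$, contradicting $p'(\lo)=p_0\notin W^s(p_n(\lo))$ (after discarding at most one $n$). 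Second, once $\widehat{p'}\subset\widehat W^s$, no $\la$-lemma or quasiconformal machinery is needed: the heteroclinic points $t_k(\lo)\in W^s_{r/2}(p_0)\cap W^u(m(\lo))$ supplied by s-exposedness have unique continuations $t_k$ (Corollary~\ref{cor:particular}), each stays in the smooth surface $\widehat W^s$ and avoids $\widehat{p'}$, so Hurwitz inside $\widehat W^s$ gives $t_k\to p'$ directly, hence $p'=p$.

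The genuine gap is your global propagation of s-regularity by ``iterating the argument along a chain of disks''. The radius $\delta$ in Proposition~\ref{prop:surface} depends on the absolute size $r_2$ of the local stable manifold at the center, not merely on the ratio $r_1/r_2$: inspecting Lemma~\ref{lem:D12} and the H\"older bound \eqref{eq:holmotion}, one finds $\delta$ shrinks with $r_2$. Since Proposition~\ref{prop:surface} only guarantees size $r_1<r_2$ on the new disk, iteration drives the sizes to zero and the $\delta$'s need not cover $\widetilde\La$; the paper in fact remarks explicitly that no lower bound on the size at a remote $\la_1$ is obtained. The paper's Step~2 is a completely different argument and is the heart of the proof: at an arbitrary $\la_1\in\widetilde\La$ one uses the \emph{global} area bounds of Proposition~\ref{prop:uniform} (not Proposition~\ref{prop:surface}) to extract a convergent subsequence of $W^s_{r_1}(p_n(\la_1))$, and the crucial Lemma~\ref{lem:multiplicity} shows the multiplicity of convergence equals $1$ by flowing a hypothetical double point back to $\lo$ through the natural continuations $\psi^s_{\la,n}$, producing two distinct heteroclinic points in $W^s_r(p(\lo))$ whose continuations collide at $\la_1$, in contradiction with Corollary~\ref{cor:particular}. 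Only then does Proposition~\ref{prop:disk} give smoothness of the limit, hence s-regularity of $p(\la_1)$. This multiplicity-$1$ step is absent from your plan and cannot be replaced by a chain-of-disks iteration.
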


Using the terminology introduced in \cite[\S 3]{dl}, we can reformulate this by saying that the set $\mathcal{R}^s$ 
of s-regular and exposed  points moves under  a strongly unbranched, hence continuous, holomorphic motion. 
In particular for
 $\la_1, \la_2\in \La$, $f_{\la_1}\rest{\mathcal{R}^s_{\la_1}}$ is {\em topologically} conjugate to 
 $f_{\la_2}\rest{\mathcal{R}^s_{\la_2}}$, that is, the induced 
  conjugacy   $\mathcal{R}^s_{\la_1} \cv \mathcal{R}^s_{\la_2}$ is a homeomorphism. 
 
 Since regular points are u- and s- exposed we obtain the following corollary, which contains
 Theorem \ref{theo:pesin strong}.  The conclusion about transversality is not obvious and will be proved afterwards.
 
 \begin{cor}\label{cor:transverse}
 Let $(f_\la)_{\la\in \La}$ 
be a weakly stable substantial family of polynomial automorphisms of $\cd$ of dynamical degree $d\geq 2$. 
Then regular  points move under a strongly unbranched holomorphic motion. Furthermore, transverse regular points remain transverse throughout the family. 
\end{cor}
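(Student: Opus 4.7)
The plan is to deduce Corollary \ref{cor:transverse} from Theorem \ref{thm:regular}, applied both to $f_\la$ and to $f_\la^{-1}$, together with Proposition \ref{prop:surface} and a local intersection-theoretic argument in $\La\times\cd$. By the proposition preceding Example \ref{exam:zero}, any regular point $p_0$ is simultaneously $s$- and $u$-exposed, so Theorem \ref{thm:regular} (and its symmetric version) yields a unique continuation $\la\mapsto p(\la)$ that is both $s$- and $u$-regular and exposed for every $\la$, with the branched motion of $J^*$ unbranched along $\widehat p$. It then remains to check that $W^s_\loc(p(\la))\ne W^u_\loc(p(\la))$ for every $\la$. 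For this, I would set
\[ E=\set{\la\in\La:W^s_\loc(p(\la))=W^u_\loc(p(\la))\text{ as germs at }p(\la)}. \]
Near any $\la_*$, the local manifolds extend by Proposition \ref{prop:surface} to irreducible smooth hypersurfaces $\widehat W^s,\widehat W^u$ of $\La\times\cd$ which coincide as germs if and only if they agree on any single fiber; hence $E$ is both open and closed, and since $\lo\notin E$ one concludes $E=\emptyset$.

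For the transversality assertion, choose holomorphic fiberwise tangent vectors $v^u(\la),v^s(\la)$ to $\widehat W^u,\widehat W^s$ at $(\la,p(\la))$; then $F(\la)=v^u(\la)\wedge v^s(\la)$ is holomorphic, $F(\lo)\ne 0$, and the non-transverse set $T=\{F=0\}$ is analytic in the disk $\La$, hence discrete. Suppose for contradiction that $\la_1\in T$. Then $W^u_\loc(p(\la_1))$ and $W^s_\loc(p(\la_1))$ meet at $p(\la_1)$ with intersection multiplicity at least $2$, so by stability of the intersection cycle, for $\la$ near $\la_1$ but distinct from it there is a second transverse intersection $q(\la)\in W^u_\loc(p(\la))\cap W^s_\loc(p(\la))\subset K_\la$, with $q(\la)\to p(\la_1)$. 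Equivalently, $\widehat W^u\cap\widehat W^s$ acquires near $(\la_1,p(\la_1))$ an irreducible component $\widehat q$ distinct from $\widehat p$, projecting surjectively to a neighborhood of $\la_1$, so that $\la\mapsto q(\la)$ is holomorphic. Approximating $p(\la)$ by saddles witnessing its $s$- and $u$-regularity, $q(\la)$ is a Hausdorff limit of transverse heteroclinic intersections of these saddles, hence $q(\la)\in J^*_\la$. Both $\widehat p$ and $\widehat q$ are now holomorphic continuations of the $s$-regular exposed point $p(\la_1)$ into $K$, so the uniqueness part of Theorem \ref{thm:regular} (applied locally at $\la_1$) forces $\widehat p=\widehat q$, contradicting $q(\la)\ne p(\la)$ for $\la\ne\la_1$.

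The hard part is verifying that $q(\la)\in J^*_\la$ and that the map $\la\mapsto q(\la)$ falls under the hypotheses of Theorem \ref{thm:regular}. This is essentially a statement about propagating the uniform geometry of Section \ref{sec:size} from $p(\la_1)$ to the nearby intersection $q(\la)$ along $W^s_\loc(p(\la))$: the approximating saddles of $p(\la)$ have stable and unstable manifolds whose local sizes near $q(\la)$ are controlled by those near $p(\la)$ thanks to Propositions \ref{prop:surface} and \ref{prop:cvsize}, which allows one to realize $q(\la)$ itself as a transverse regular point and to apply Theorem \ref{thm:regular} to it.
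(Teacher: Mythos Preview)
Your overall strategy is close to the paper's, but there are two genuine gaps and one unnecessary detour.

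\medskip

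\textbf{Regularity (i.e.\ $W^s_\loc(p(\la))\neq W^u_\loc(p(\la))$).} Your open/closed argument on $E$ does not work as stated: the claim that the hypersurfaces $\widehat W^s,\widehat W^u$ ``coincide as germs if and only if they agree on any single fiber'' is false. Two irreducible smooth surfaces in $\La\times\cd$ can agree on the curve $\{\la_*\}\times W^s_\loc(p(\la_*))$ without coinciding in a neighborhood (just take $\phi^s-\phi^u=(\la-\la_*)g$ with $g\not\equiv 0$). So $E$ is not obviously open. The paper bypasses this entirely with a one-line argument: since $p(\la)$ is both $s$- and $u$-exposed (which you already established), if $W^s_\loc(p(\la))=W^u_\loc(p(\la))$ then this common disk lies in $K^+_\la\cap K^-_\la=K_\la$, contradicting exposure.

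\medskip

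\textbf{Transversality: the branched-cover issue.} You write that $\widehat W^u\cap\widehat W^s$ acquires a second irreducible component $\widehat q$ ``so that $\la\mapsto q(\la)$ is holomorphic''. But the extra component $\widehat C'$ of this intersection curve need not be a graph over $\La$; the projection $\pi_\La\rest{\widehat C'}$ can be a branched cover of degree $>1$ (branched precisely at $\la_1$). The paper handles this explicitly: normalize $\widehat C'$ by $\varpi:\dd\to\widehat C'$, pull the family back along $\mu\mapsto\la(\mu):=\pi_\La\circ\varpi(\mu)$ to obtain a new weakly stable family $(\widetilde f_\mu)$, and in this new family the second continuation $\mu\mapsto\pi_\cd\circ\varpi(\mu)$ \emph{is} a graph, reducing to the easy case.

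\medskip

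\textbf{An unnecessary detour.} Your ``hard part'' paragraph is not needed. You only need $q(\la)\in K_\la$, not $q(\la)\in J^*_\la$ or regularity of $q(\la)$: indeed $W^s_\loc(p(\la))\subset K^+_\la$ and $W^u_\loc(p(\la))\subset K^-_\la$ (as limits of stable/unstable manifolds of saddles), so any intersection point lies in $K_\la$. The uniqueness in Theorem~\ref{thm:regular} is stated for continuations in $K$, so this suffices directly for the contradiction.
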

 
 The following corollary is a first step towards Theorem \ref{theo:hyp}.
 
 \begin{cor}
 If $(f_\la)_{\la\in \La}$ is a weakly stable substantial family of polynomial automorphisms of $\cd$ and if for some $\lo\in \La$, 
 $f_\lo$ is uniformly hyperbolic on $J^*_\lo$, then for every $\la\in \La$, 
 $f_\la\rest{J^*_\la} $ is  topologically conjugate to $f_\lo\rest{J^*_\lo} $.
 \end{cor}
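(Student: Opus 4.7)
The plan is to observe that uniform hyperbolicity of $f_\lo$ on $J^*_\lo$ forces every point of $J^*_\lo$ to be transverse regular with a uniform size, then invoke Corollary \ref{cor:transverse} to transport this regularity across the family, and finally match the resulting motion with the branched holomorphic motion of the whole $J^*$.

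First I would use the uniform version of the Stable Manifold Theorem: on a uniformly hyperbolic set, every point admits local stable and unstable manifolds of uniformly bounded size, whose tangent spaces make a uniformly bounded angle. Combined with the density of saddle points in $J^*_\lo$ (whose local (un)stable manifolds inherit the same uniform estimates), this shows that every $p\in J^*_\lo$ is both $s$-regular and $u$-regular of a common size $r>0$, and that the corresponding local stable and unstable manifolds are transverse. Hence every point of $J^*_\lo$ is transverse regular in the sense of Definition \ref{defi:regular}.

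Next I would apply Corollary \ref{cor:transverse}: each $p\in J^*_\lo$ has a unique continuation $p(\la)$ under the branched holomorphic motion of $J^*$, and the assignment $h_\la: p\mapsto p(\la)$ defines a strongly unbranched, hence continuous, equivariant holomorphic motion with values in $J^*_\la$, such that $p(\la)$ remains transverse regular for every $\la\in \La$.

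To conclude I would check that $h_\la$ is a bijection onto $J^*_\la$. Injectivity is built into the strongly unbranched property (no collisions). For surjectivity, weak stability ensures that saddle periodic points of $f_\la$ and of $f_\lo$ are in bijection via the motion, so $h_\la$ sends saddles onto saddles; since $h_\la$ is continuous and $J^*_\lo$ is compact, $h_\la(J^*_\lo)$ is a closed subset of $J^*_\la$ containing a dense subset of it, hence equals $J^*_\la$. A continuous equivariant bijection between compact Hausdorff spaces is a homeomorphism, which yields the desired topological conjugacy. The only genuinely delicate step is the first one, namely upgrading regularity of saddles to regularity of every point of $J^*_\lo$; this is where uniform hyperbolicity is truly used, and everything that follows is essentially a packaging of Corollary \ref{cor:transverse} together with the density of saddles in $J^*$.
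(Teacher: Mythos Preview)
Your proof is correct and follows the same route as the paper: uniform hyperbolicity makes every point of $J^*_\lo$ (transverse) regular, and then Corollary~\ref{cor:transverse} (equivalently Theorem~\ref{thm:regular}) yields the conjugacy. The paper's proof is a single sentence (``for a hyperbolic map, all points in $J^*$ are regular''), so your additional verification of surjectivity onto $J^*_\la$ via density of saddles and compactness is a useful clarification rather than a different argument.
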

 
 Indeed, just observe that for a hyperbolic map, all points in $J^*$ are regular. 
 
\subsection{Proofs of Theorem \ref{thm:regular} and Corollary \ref{cor:transverse}} 

 \begin{proof}[Proof of Theorem \ref{thm:regular}]
 The plan of the proof is the following:  
 we start by treating the particular case of points belonging to stable manifolds of saddle points.
 Using   the results of \S \ref{subs:param2},  we work locally in $\La$ to show
  that the branched holomorphic 
 motion of $J^*$ is unbranched at 
  $s$-regular and exposed points. 
  Then, using the global area bounds from \S \ref{subs:area} we show that regular points remain 
 regular in the family, which allows to conclude the proof.
 
 \medskip
 
 \noindent{\bf Step 0.} A particular case.
 
 \medskip
 
 Here we prove the following lemma, which is essentially contained in \cite{dl}. 
 
 \begin{lem}\label{lem:particular}
  Let $(f_\la)_{\la\in \La}$  be a weakly stable substantial family of polynomial automorphisms of $\cd$. 
  Let $p: \La\cv\cd$ be such that for every $\la$, $p(\la)\subset K_\la$. 
  Assume that for some $\lo\in \La$, $p(\lo)$ belongs to the stable manifold of a saddle point $m(\lo)$ 
  (which necessarily persists as $m(\la)$ in the family). Then for every $\la\in \La$, $p(\la)\in W^s(m(\la))$. 
  
  If in addition, $p(\lo)$ is exposed inside $W^s(m(\lo))$, then the branched motion of $J^*$ is unbranched along 
  $\widehat p$ and $p(\la)$ remains exposed throughout the family. 
\end{lem}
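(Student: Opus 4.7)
The argument splits into two parts.

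\emph{Part 1: $p(\la)\in W^s(m(\la))$ for every $\la$.}  The strategy is to compare $p$ with a ``natural continuation'' of $p(\lo)$ staying inside the moving stable manifold.  In the same spirit as the Bers--Royden extension recalled just before Lemma \ref{lem:harnack} (applied in the stable direction, \textit{cf.}\ the corresponding statements in \cite{dl}), the holomorphic motion of $W^s(m(\lo))$ extends to a holomorphic motion $\tilde h_\la : W^s(m(\lo)) \to W^s(m(\la))$ that respects the decomposition $W^s(m(\cdot))=(W^s(m(\cdot))\cap K^-)\sqcup (W^s(m(\cdot))\cap U^-)$.  Setting $\tilde p(\la):=\tilde h_\la(p(\lo))$ produces a holomorphic map $\tilde p:\La\to\cd$ with $\tilde p(\la)\in W^s(m(\la))\cap K_\la$ and $\tilde p(\lo)=p(\lo)$.

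To show that $p\equiv \tilde p$ I would pass to a holomorphic family of linearizing coordinates around the saddle orbit $m(\la)$, the existence of which in a substantial family rules out the relevant resonances between the stable and unstable multipliers.  For $N$ large and $\la$ near $\lo$, both $q_N(\la):=f_\la^N(p(\la))$ and $\tilde q_N(\la):=f_\la^N(\tilde p(\la))$ lie in the linearizing neighborhood $V_\la$; writing their unstable components gives holomorphic functions $u_N, \tilde u_N$ with $u_N(\lo)=\tilde u_N(\lo)=0$ and $\tilde u_N\equiv 0$.  While one stays in $V_\la$, further iterates multiply $u_N$ by the unstable multiplier to the $k$-th power, whereas $q_{N+k}(\la)$ remains bounded in the uniformly bounded set $K_\la$.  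A Schwarz-lemma estimate on the unit disk $\La$ then pits the order of vanishing of $u_N$ at $\lo$ against the exponential growth factor, forcing $u_N\equiv 0$; hence $q_N(\la)\in W^s_\loc(m(\la))$ and $p(\la)\in W^s(m(\la))$ for every $\la$.

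\emph{Part 2: Unbranching and preservation of exposure.}  Assume now that $p(\lo)$ is s-exposed.  Since $W^s_\loc(p(\lo))$ is, locally, an open subset of $W^s(m(\lo))$, the characterization $(iv)$ in Definition-Proposition \ref{defi:exposed} provides a sequence $(t_n)$ of transverse heteroclinic intersections on $W^s(m(\lo))$ accumulating to $p(\lo)$.  By Theorem \ref{thm:unbranched} each $t_n$ admits a unique holomorphic continuation $t_n(\la)$ along which the branched motion of $J^*$ is unbranched, and Part 1 applied to each $t_n$ ensures that $t_n(\la)\in W^s(m(\la))$ throughout $\La$.  Passing to the limit, normality of the branched motion together with the identity $p(\lo)=\lim_n t_n(\lo)$ yields $p(\la)=\lim_n t_n(\la)$, which is the unique continuation of $p(\lo)$; consequently the branched motion of $J^*$ is unbranched along $\widehat p$.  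Preservation of exposure then follows because the stable Bers--Royden extension $\tilde h_\la$ preserves the $K^-$/$U^-$ decomposition on $W^s(m(\cdot))$, and a Harnack-type bound on $G^-$ analogous to Lemma \ref{lem:harnack} transports points of positive $G^-$ near $p(\lo)$ to points of positive $G^-$ near $p(\la)$.

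\emph{Main obstacle.}  The delicate step is the identification $p\equiv \tilde p$ in Part 1.  The linearizing coordinates control the unstable component only inside a fixed-size neighborhood $V_\la$ of $m(\la)$, while the set $\{\la : q_N(\la)\in V_\la\}$ shrinks as $N$ grows; closing the Schwarz-type estimate requires carefully balancing the vanishing order of $u_N$ at $\lo$ against the exponential factor $|\mu^u_\la|^k$ and the uniform boundedness of $K_\la$ on a compact $\widetilde\La\Subset \La$.
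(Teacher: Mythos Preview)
Your approach via holomorphic linearization and Schwarz-type estimates is far more involved than what the paper does, and the obstacle you flag is real.  Substantiality only forbids \emph{persistent} multiplicative relations between the multipliers; it does not rule out resonances at isolated parameters, so a holomorphic family of linearizing charts need not exist.  Even granting linearization, the balancing act you describe between the shrinking domain $\{\la:q_N(\la)\in V_\la\}$ and the exponential factor $|\mu^u_\la|^k$ is not carried out.

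The paper bypasses all of this with a two-line argument: since $p(\la)\in K_\la$ for every $\la$, the sequence of graphs $\widehat f^n(\widehat p)$ is locally uniformly bounded in $\La\times\cd$.  Any cluster value $\widehat r$ lies in $\widehat K$ and satisfies $r(\lo)=m(\lo)$ (because $f_\lo^n(p(\lo))\to m(\lo)$).  Theorem \ref{thm:unbranched} (unbranching of the motion at saddle points) forces $\widehat r=\widehat m$, so $\widehat f^n(\widehat p)\to \widehat m$, i.e.\ $p(\la)\in W^s(m(\la))$ for every $\la$.  No linearization, no $\tilde p$, no Schwarz lemma.

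\textbf{Part 2.}  Your outline is the right idea and matches the paper in spirit, but the step ``normality of the branched motion together with $p(\lo)=\lim_n t_n(\lo)$ yields $p(\la)=\lim_n t_n(\la)$'' is a gap: normality only gives that some subsequence of $(t_n)$ converges to \emph{some} graph through $(\lo,p(\lo))$, and identifying that graph with $\widehat p$ is precisely the unbranching statement you are trying to prove.  The paper closes this by working \emph{inside} a fixed smooth surface $\widehat W\subset \widehat W^s(\widehat m)$ in a neighborhood of $(\lo,p(\lo))$: both $\widehat p$ and the $\widehat t_k$ live in $\widehat W$ (by Part 1), they are disjoint (unless $p(\lo)$ is itself homoclinic, handled separately), and then the classical Hurwitz theorem in the one-dimensional fibers of $\widehat W$ forces $t_k\to p$ locally, hence globally by analytic continuation.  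Your argument for preservation of exposure via the Harnack-type bound on $G^-$ is exactly what the paper does.
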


Of course, the same result holds for unstable manifolds. Recall that $p(\la)$ is exposed inside $W^s(m(\la))$ if and only if 
$p(\la)$ is a limit of homoclinic or heteroclinic intersections for the intrinsic topology of $W^s(m(\la))$.

\begin{proof}
The sequence of iterates $\widehat f^n\lrpar{\widehat p}$ is locally uniformly bounded in $\La\times\cd$. 
Pick a cluster value $\widehat r$ of this sequence. Then $r(\lo) = m(\lo)$ and $\widehat r\subset \widehat K$. 
Then by Theorem \ref{thm:unbranched}, $r\equiv m$, so we conclude that for every $\la$, $p(\la)\in W^s(m(\la))$.

To get the second conclusion, note that for $\la = \lo$, $p(\lo) = \lim t_k(\lo)$ is a limit of homoclinic intersections, 
in the intrinsic topology of $W^s(m(\lo))$. By Theorem \ref{thm:unbranched} $t_k(\lo)$ 
admits a unique continuation $t_k$ to $\La$ as a homoclinic intersection. 
Let $\Delta_\lo\subset W^s(m(\lo))$ be a disk containing $p(\lo)$.
By the persistence of stable manifolds of saddle points, 
there exists a neighborhood $N$ of  $(\lo, p(\lo))$ in $\La\times \cd$ and a smooth surface $\widehat W$ in $N$ such that 
$\widehat W\cap \cd_\lo = \Delta_\lo$ and $\widehat 
W\subset \widehat W^s(\widehat m)$. Now there are two cases: either $p(\lo)$ is itself a homoclinic 
intersection, and we conclude by Theorem \ref{thm:unbranched}. Otherwise  $p(\la)$ is always distinct from $t_k(\la)$, and applying the 
Hurwitz Theorem inside $\widehat W$, we conclude that when $k\cv\infty$, 
$t_k \cv p$ in a neighborhood of $\lo$, hence everywhere by analytic continuation. 
We conclude that $p(\lo)$ admits a unique continuation $p$ staying in $K$

To conclude that the   branched motion of $J^*$ is unbranched along $p$ at all parameters, 
it suffices to show that $p(\la)$ remains exposed inside  $W^s(m(\la))$. For this, it is enough to 
show that $G^-\rest{W^s(m(\la))} \not\equiv  0$ in any neighborhood of $p(\la)$, which  
follows directly  from Lemma \ref{lem:harnack}.  The proof is complete. 
\end{proof}

Let us note for future reference  the following consequence of this lemma. 

\begin{cor}\label{cor:particular}
Let $p(\lo)$ be a s-regular point of size $r$ and   $t(\lo)$ be an 
intersection between 
$W^s_r(p(\lo))$ and $W^u(m(\lo))$, where $m(\lo)$ is a saddle point. Then there exists a unique continuation $t$ of $t(\lo)$ such that 
$\widehat t\subset \widehat K$, and the branched motion of $J^*$ is unbranched along $\widehat t$.
\end{cor}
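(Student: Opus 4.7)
The approach is to define $t$ via the natural motion on $W^u(m)$, then establish uniqueness and the unbranched property by approximating $t(\lo)$ with saddle-to-saddle intersections and applying the Hurwitz theorem in an unstable parameterization. For existence: since $t(\lo)\in W^u(m(\lo))$ and $m$ persists as a saddle, I set $t(\la):=h_\la(t(\lo))$, where $h_\la$ is the natural holomorphic motion on $W^u(m)$. By Proposition~\ref{prop:cvsize} applied to the s-regular point $p(\lo)$ of size $r$, the disk $W^s_r(p(\lo))$ is the multiplicity-$1$ limit of local stable manifolds $W^s_r(p_n(\lo))\subset K^+_\lo$ of saddles $p_n(\lo)\to p(\lo)$, so $t(\lo)\in K^+_\lo$. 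Since $h_\la$ preserves the decomposition $W^u(m)=(W^u(m)\cap K^+)\sqcup(W^u(m)\cap U^+)$, we get $t(\la)\in K^+_\la\cap W^u(m(\la))$ for every $\la$, whence $\widehat t\subset \widehat{J^*}\subset \widehat K$.

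For uniqueness and the unbranched property, I would first approximate $t(\lo)$ by saddle-to-saddle intersections. The curves $W^s_r(p(\lo))$ and $W^u(m(\lo))$ have complementary dimensions in $\cd$, hence intersect properly. Combined with the multiplicity-$1$ convergence $W^s_r(p_n(\lo))\to W^s_r(p(\lo))$, the persistence of proper intersections (see \cite{chirka}) yields, for $n$ large, points $t_n(\lo)\in W^s_r(p_n(\lo))\cap W^u(m(\lo))$ with $t_n(\lo)\to t(\lo)$. Each $t_n(\lo)$ is a homoclinic or heteroclinic intersection between saddle points, so by Theorem~\ref{thm:unbranched} it admits a unique continuation $t_n$ in $\widehat{J^*}$ along which the branched motion is unbranched. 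By continuity of $h_\la$ on $W^u(m)$, $t_n=h_\cdot(t_n(\lo))\to t$ locally uniformly on $\La$.

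Finally, let $\gamma$ be either another continuation of $t(\lo)$ with $\widehat \gamma\subset \widehat K$ (uniqueness case), or a graph in the branched motion of $\widehat{J^*}$ crossing $\widehat t$ at some parameter $\la_0\in\La$ with $\gamma\neq t$ (unbranching case). In both cases Lemma~\ref{lem:particular}, applied in the unstable direction, forces $\gamma(\la)\in W^u(m(\la))$ for every $\la$. Fixing a holomorphic family of unstable parameterizations $\psi^u_\la:\cc\to W^u(m(\la))$, I pull back to obtain holomorphic maps $\zeta,\zeta^\gamma,\zeta_n\colon\La\to \cc$. Since each $\widehat{t_n}$ is unbranched in $\widehat{J^*}$ and distinct from $\gamma$, the holomorphic function $\zeta_n-\zeta^\gamma$ has no zero on $\La$. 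Passing to the limit, $\zeta_n-\zeta^\gamma\to \zeta-\zeta^\gamma$ vanishes at $\la_0$ (respectively $\lo$), and Hurwitz's theorem forces $\zeta\equiv \zeta^\gamma$, a contradiction. The main technical point is the reduction via Lemma~\ref{lem:particular} confining $\gamma$ to $W^u(m)$, which brings the problem into a one-variable setting where Hurwitz applies.
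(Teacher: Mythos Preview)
Your argument is correct and shares the same core observation as the paper: that $t(\lo)$ is a limit, in the intrinsic topology of $W^u(m(\lo))$, of heteroclinic intersections $t_n(\lo)\in W^s_r(p_n(\lo))\cap W^u(m(\lo))$, obtained by persistence of proper intersections from the multiplicity-$1$ convergence $W^s_r(p_n(\lo))\to W^s_r(p(\lo))$. The paper, however, stops here: this is precisely the statement that $t(\lo)$ is \emph{exposed inside} $W^u(m(\lo))$, which is exactly the hypothesis of the second part of Lemma~\ref{lem:particular} (applied in the unstable direction). Existence, uniqueness, and unbranching then follow in one stroke from that lemma.

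What you do instead is unpack the proof of Lemma~\ref{lem:particular}: you construct $t$ explicitly via the natural motion $h_\la$, confine any competing $\gamma$ to $W^u(m)$ via the first part of the lemma, pull back to $\cc$ through unstable parameterizations, and run Hurwitz. This is fine, but it duplicates work already packaged in Lemma~\ref{lem:particular}. One small point you should make explicit (as the paper's proof of Lemma~\ref{lem:particular} does): if $t(\lo)$ happens to be itself a heteroclinic intersection, your Hurwitz step needs the $t_n$ to be distinct from $\gamma$, which may fail; in that case one simply invokes Theorem~\ref{thm:unbranched} directly.
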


\begin{proof}
In virtue of Lemma \ref{lem:particular}, it is enough to show that $t(\lo)$ is exposed inside $W^u(m(\lo))$. For this, recall
 that   $W^s_r(p(\lo))$ is the limit of a sequence  $W^s_r(p_n(\lo))$ of local stable manifolds of saddle points. Therefore, 
 by the persistence of proper intersections,  
  $t(\lo)$ is the  limit  in the intrinsic topology of $W^u(m(\lo))$ of a sequence of 
  heteroclinic intersections with $W^s_r(p_n(\lo))$, 
  and we are done.
  \end{proof}

 \medskip
\noindent{\bf Step 1.} The branched motion is unbranched at s-regular and exposed points. 
  
 \smallskip
 
Let $\lo\in\widetilde\La\Subset \La$ and $p(\lo)$ be s-regular and exposed for $f_\lo$. We want to 
construct a natural continuation  of $p(\lo)$. 
 Let  $r_0>0$ be such that there exists a sequence of distinct saddle points $p_n\cv p$ with local stable  manifolds of size $r'_0 := 2r_0$. 
  Extracting a subsequence we assume that $(\widehat p_n)$ converges to some $\widehat p$ in $\La\times \cd$ 
   (later on we will see that this limit is unique).
	It follows from Proposition \ref{prop:surface} that for $\abs{\la-\lo}<\delta=\delta(r_0, \widetilde\La)$, 
	$W^s_\loc( p_n(\la))$ is of size $r_0$, 
 therefore $p(\la)$ is s-regular\footnote{We see that it is crucial  here  that $\delta(r_1, r_2)$ in Proposition \ref{prop:surface} depends only on $r_1$ and $r_2$.}.  
 
Our goal here  is to show that   if $q:\La\cv \cd$ is such that $q(\lo)=p(\lo)$ and $q(\la)\in K_\la$ for every $\la$, then $q(\la)=p(\la)$ for every $\la$.

\begin{claim}
There exists a  neighborhood $N = \tub\lrpar{\widehat p , r_0}\cap (D(\lo, \delta(r_0))\times \cd)
$ of $(\lo, p(\lo))$ in $\La\times \cd$ and a smooth hypersurface 
 $\widehat{W}^s_{r_0}\lrpar{\widehat{p}}$ in $N$ such that the sequence of hypersurfaces 
 $\widehat W^s_{r_0}\lrpar{\widehat{p}_n}$ given by Proposition \ref{prop:surface}
 converges to  $\widehat{W}^s_{r_0}\lrpar{\widehat{p}}$ with multiplicity 1 in $N$.
\end{claim}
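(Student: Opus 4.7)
My plan is to realize the limit hypersurface $\widehat W^s_{r_0}\lrpar{\widehat p}$ concretely as the graph of a holomorphic function of two variables, by promoting the fiberwise convergence supplied by Proposition~\ref{prop:cvsize} to joint convergence via a normal family argument.

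First, I would fix linear coordinates $(x,y)$ on $\cd$ so that $p_0 = (0,0)$ and $E^s(p_0) = \set{x=0}$. Proposition~\ref{prop:surface} applied with $r_1 = r_0$ and $r_2 = 2r_0$ provides the smooth hypersurfaces $\widehat W^s_{r_0}\lrpar{\widehat p_n}$ in $\tub\lrpar{\widehat p_n, r_0}\cap (D(\lo,\delta)\times\cd)$, each of which is fiberwise a graph of slope at most $1$ over $p_n(\la) + E^s(p_n(\lo))$ and carries uniformly bounded volume. Applying Proposition~\ref{prop:cvsize} at each fixed $\la$ (legitimate because the previous paragraph shows that $p(\la)$ is s-regular of size $r_0$), I obtain fiberwise convergence $W^s_{r_0}(p_n(\la)) \cv W^s_{r_0}(p(\la))$ with multiplicity $1$; in particular the tangent directions satisfy $E^s(p_n(\lo)) \cv E^s(p_0) = \set{x=0}$.

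Next, I would fix a small $\eta>0$. For $n$ sufficiently large, the direction $E^s(p_n(\lo))$ is $\eta$-close to the $y$-axis, and combined with the uniform slope bound this implies that on a slightly shrunk tube $N' = \tub\lrpar{\widehat p, r_0-\eta'}\cap (D(\lo,\delta')\times\cd)$, the piece of $\widehat W^s_{r_0}\lrpar{\widehat p_n}$ lying in $N'$ is the graph $x = \phi_n(\la,y)$ of a function holomorphic in both variables on a bidisk $D(\lo,\delta')\times D(0,r')$, the joint holomorphy being a byproduct of the biholomorphism argument used in the proof of Lemma~\ref{lem:smooth}.

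The heart of the argument is then a normal family step: the $\phi_n$ are uniformly bounded because their graphs lie in the bounded tube $\tub(\widehat p, r_0)$, so by Montel they form a normal family on the bidisk. The fiberwise convergence of the previous paragraph pins down the pointwise limit at each $\la$ as the graph function of $W^s_{r_0}(p(\la))$ over the $y$-coordinate, so by uniqueness of limits in a normal family the whole sequence $(\phi_n)$ converges locally uniformly to a holomorphic function $\phi(\la,y)$. I then define $\widehat W^s_{r_0}\lrpar{\widehat p}$ as the graph of $\phi$: it is automatically a smooth hypersurface, and convergence of graphs of a uniformly convergent sequence of holomorphic functions is by definition with multiplicity $1$. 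The main obstacle I anticipate is the bookkeeping of the various scales: passing from ``graph over $p_n(\la)+E^s(p_n(\lo))$ of slope $\leq 1$'' to ``graph over $p(\la)+\set{x=0}$ of slope $\leq 1$'' costs a small amount in the radius, which must be absorbed into the initial choice of $r_0$ by starting from saddles whose stable manifolds have size $2r_0$ (as the hypotheses of the claim already arrange). Conceptually the proof is clean once one recognizes that Lemma~\ref{lem:smooth} combined with Montel is the correct tool.
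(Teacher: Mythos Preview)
Your proposal is correct and reaches the same conclusion, but it follows a different route from the paper's own proof. The paper argues at the level of analytic sets in $\La\times\cd$: it invokes Bishop's theorem on the $\widehat W^s_{r_0}(\widehat p_n)$ (using the uniform volume bound from Proposition~\ref{prop:surface}), observes that the fibers are graphs of slope at most $2$ over an eventually fixed direction so that any subsequential limit is smooth via Lemma~\ref{lem:smooth} and the convergence has multiplicity $1$, and then appeals to Proposition~\ref{prop:cv} (disjoint hypersurfaces with locally irreducible cluster values must converge) to pass from subsequential to full convergence. You instead work at the level of functions: you use Lemma~\ref{lem:smooth} to rewrite each $\widehat W^s_{r_0}(\widehat p_n)$ as the graph of a bounded holomorphic $\phi_n(\la,y)$, apply Montel to get a normal family, and then pin down the unique cluster value by invoking Proposition~\ref{prop:cvsize} fiberwise.

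The trade-off is this: your argument is more concrete and avoids Bishop's theorem and the codimension-one Proposition~\ref{prop:cv} in $\La\times\cd$, at the price of importing Proposition~\ref{prop:cvsize}, which itself rests on Proposition~\ref{prop:cv} in $\cd$. The paper's argument is more uniform with the toolkit of \S\ref{subs:prel geom} and does not need to appeal to the fiberwise result separately. Your bookkeeping remark about absorbing the loss of radius into the passage from size $2r_0$ to $r_0$ is exactly right and matches how the paper sets things up just before the Claim.
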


\begin{proof}
Indeed  the volumes of  $\widehat W^s_{r_0}\lrpar{\widehat{p}_n}$  are uniformly bounded in $N$, so 
 we may extract converging subsequences by Bishop's Theorem \ref{thm:bishop}. 
 Fix such a subsequence $\widehat W^s_{r_0}\lrpar{\widehat{p}_{n_j}}\cv W$. 
  Since the unstable directions $E^u(p_n(\lo))$ converge, for $\abs{\la-\lo}<\delta$,  
 $W^s_{r_0}(p_n(\la))$ is a graph of slope at most 2 over a fixed direction for large $n$. 
 In particular the convergence is of multiplicity 1. 
  By Lemma \ref{lem:smooth}, $W$ is smooth, and from Proposition \ref{prop:cv} we get that the sequence 
  $\widehat W^s_{r_0}\lrpar{\widehat{p}_n}$ actually converges. Notice that by construction, for 
 $\abs{\la-\lo}<\delta$, we have that $W^s_{r_0}(p(\la)) = \widehat{W}^s_{r_0}\lrpar{\widehat{p}} \cap \cc^2_\la$. 
\end{proof} 

As a consequence of this claim and the Hurwitz Theorem we get  the following:

\begin{claim}\label{claim:2}
There exists $\eta=\eta(r_0, \widetilde \La)>0$ such that if $q$ is a holomorphic map  $\La\cv\cc^2$  such that  $\widehat q\subset \widehat K$ and 
  $q(\lo)\in W^s_{r_0/2}(p(\lo))$, 
then for $\abs{\la-\lo}<\eta$, $q(\la)\in W^s_{r_0}(p(\la))$.
\end{claim}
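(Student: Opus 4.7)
The plan is to argue by contradiction, exploiting the smooth hypersurface $\widehat{W}^s_{r_0}(\widehat p)$ built in the preceding claim, together with the persistence of proper intersections from \cite{chirka} and Lemma~\ref{lem:particular}. First I would choose $\eta \leq \delta(r_0,\widetilde\La)$ small enough that the hypersurfaces $\widehat W := \widehat{W}^s_{r_0}(\widehat p)$ and $\widehat W_n := \widehat{W}^s_{r_0}(\widehat{p}_n)$ are all defined over $D(\lo,\eta)$ (Proposition~\ref{prop:surface}) and small enough that the graph $\widehat q$ stays inside $\tub(\widehat p,r_0)$ on $D(\lo,\eta)$; this last point is ensured by continuity of $q$ and $p$ together with the strict inclusion $q(\lo)\in W^s_{r_0/2}(p(\lo))\subset \tub(\widehat p,r_0)\cap\cc^2_\lo$.

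Next, suppose for contradiction that $\widehat q$ is not locally contained in $\widehat W$ at $(\lo,q(\lo))$. Since $\widehat q$ is a smooth complex curve and $\widehat W$ is a smooth complex hypersurface of the $3$-dimensional ambient space $\La\times\cd$, they would then intersect properly at $(\lo,q(\lo))$. The preceding claim asserts that $\widehat W_n \cv \widehat W$ with multiplicity $1$ in $N$, so by the persistence of proper intersections (\cite[Prop.~2 p.~141 and Cor.~4 p.~145]{chirka}), the intersection $\widehat q\cap \widehat W_n$ is non-empty near $(\lo,q(\lo))$ for every large $n$: there exist $\la_n \cv \lo$ with $q(\la_n)\in W^s_{r_0}(p_n(\la_n))\subset W^s(p_n(\la_n))$.

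Finally I would apply Lemma~\ref{lem:particular} with basepoint $\la_n$: since $\widehat q \subset \widehat K$ and $p_n$ persists as a saddle throughout $\La$ by weak stability, the inclusion $q(\la_n)\in W^s(p_n(\la_n))$ forces $q(\la)\in W^s(p_n(\la))$ for every $\la\in\La$, hence in particular $q(\lo)\in W^s(p_n(\lo))$ for infinitely many $n$. After extracting a subsequence, we may assume the saddles $p_n(\lo)$ lie on pairwise distinct periodic orbits of $f_\lo$; but then the $\omega$-limit set of $q(\lo)$ under $f_\lo$ would have to coincide simultaneously with each of infinitely many distinct orbits, which is absurd. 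This contradiction gives $\widehat q\subset \widehat W$ on $D(\lo,\eta)$, which is exactly the stated conclusion. The main obstacle, and the step requiring the most care, is applying the persistence of proper intersections in a sufficiently quantitative form to produce the sequence $(\la_n,q(\la_n))$ converging to $(\lo,q(\lo))$; this is precisely what the multiplicity $1$ convergence $\widehat W_n \cv \widehat W$ from the preceding claim is tailored to deliver.
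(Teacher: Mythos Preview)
Your argument is correct and follows essentially the same route as the paper: both use the convergence $\widehat W^s_{r_0}(\widehat p_n)\to\widehat W^s_{r_0}(\widehat p)$ together with persistence of proper intersections, and both derive a contradiction from $\widehat q$ meeting $\widehat W^s_{r_0}(\widehat p_n)$ via Lemma~\ref{lem:particular}. The paper simply reorganizes the logic by first observing (without writing out the reason) that $\widehat q$ can meet $\widehat W^s_{r_0}(\widehat p_n)$ for at most one $n$, then running the Hurwitz/persistence step; you instead carry the argument through to the $\omega$-limit contradiction explicitly, which amounts to supplying the justification the paper left implicit.

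One point to tighten: the claim asserts $\eta=\eta(r_0,\widetilde\La)$ \emph{uniform} in $q$, so ``continuity of $q$ and $p$'' is not quite enough. What you need is that $\widehat q\subset\widehat K$ forces $q$ to be uniformly bounded on $\La$ (since the $K_\la$ are locally uniformly bounded), hence by the Cauchy estimates $q$ is uniformly Lipschitz on $\widetilde\La$; together with $\norm{q(\lo)-p(\lo)}<r_0/\sqrt 2$ this yields the required uniform $\eta$. This is exactly how the paper handles it.
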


\begin{proof} 
Discarding at most one value of $n$ if needed, we may assume that $\widehat q$ is disjoint from 
  $\widehat W^s_{r_0}\lrpar{\widehat{p}_n}$.
Since $q(\lo) \in \widehat{W}^s_{r_0/2}\lrpar{\widehat{p}}$, 
by the Cauchy estimates,   there exists $\eta=\eta(r_0)>0$ so that if $\abs{\la-\lo}<\eta$, then the point $q(\la)$ stays in  $B(p(\la), r_0)$. 

Now we have that $\widehat q \subset \widehat{W}^s_{r_0}\lrpar{\widehat{p}}$. Indeed otherwise these two manifolds would have a proper
intersection at $(\lo, q(\lo))$, and by persistence  of proper intersections we would get that 
  $\widehat q$ intersects $\widehat W^s_{r_0}\lrpar{\widehat{p}_n}$, a contradiction. 
\end{proof}

  \medskip
  
If $p(\lo)$ is a transverse regular point, this is enough to conclude. Indeed, applying the same reasoning 
in the unstable direction we get that $ q(\la) \subset {W}^u_{r_0}\lrpar{{p(\la)}}$ for $\la$ close to $\lo$. Now the  intersection 
  $\widehat{W}^s_{r_0}\lrpar{\widehat{p}}\cap \widehat{W}^u_{r_0}\lrpar{\widehat{p}}$ is transverse near $(\lo, p(\lo))$, 
  therefore it coincides with
   $\widehat p$. We conclude that $\widehat p  = \widehat q$ near $\lo$, hence everywhere, which was the desired result. 
  
  \medskip
  
Let us now deal with the general case.
 
\begin{claim}
Let  $p(\lo)$ be  s-regular and exposed.
If $q:\La\cv \cd$ is such that $q(\lo)=p(\lo)$ and $q(\la)\in K_\la$ for every $\la$, then $q(\la)=p(\la)$ for every $\la$.
\end{claim}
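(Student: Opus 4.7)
The plan is to work inside the smooth hypersurface $\widehat W^s_{r_0}(\widehat p)$ from the previous claim. Locally near $(\lo,p(\lo))$, this surface is a smooth complex $2$-fold containing the graph $\widehat p$; choose a biholomorphism $\Psi$ from a bidisk $D(\lo,\delta)\times\dd$ onto a neighborhood of $(\lo,p(\lo))$ in $\widehat W^s_{r_0}(\widehat p)$, mapping $D(\lo,\delta)\times\{0\}$ onto $\widehat p$ and preserving the $\la$-projection. By Claim~\ref{claim:2}, for $|\la-\lo|<\eta$ the graph $\widehat q$ lies in $\widehat W^s_{r_0}(\widehat p)$, so in these coordinates it becomes the graph of some holomorphic $\phi\colon D(\lo,\eta)\cv\dd$ with $\phi(\lo)=0$; the goal is to show $\phi\equiv 0$.

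The strategy is to squeeze $\widehat q$ against $\widehat p$ by a sequence of non-vanishing ``witness'' graphs, then apply the Hurwitz theorem twice. Exposedness of $p(\lo)$, read at the germ level, gives $W^s_r(p(\lo))\not\subset K^-_\lo$ for every $r>0$, so $p(\lo)$ belongs to the support of the positive measure $T^-\wedge[W^s_{r_0}(p(\lo))]$. For any auxiliary saddle $m(\lo)$ of $f_\lo$, the Bedford-Lyubich-Smillie density of heteroclinic intersections in this support (cf.\ the proof of Definition-Proposition~\ref{defi:exposed}) supplies a sequence of transverse intersections $t_n(\lo)\in W^s_{r_0/2}(p(\lo))\cap W^u(m(\lo))$ with $t_n(\lo)\cv p(\lo)$ and $t_n(\lo)\neq p(\lo)$. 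By Corollary~\ref{cor:particular}, each $t_n$ extends uniquely to a holomorphic map $t_n\colon\La\cv\cd$ with $\widehat{t_n}\subset\widehat K$; and Claim~\ref{claim:2} applied to $t_n$ in place of $q$ places $t_n(\la)\in W^s_{r_0}(p(\la))$ for $|\la-\lo|<\eta$. In the chart $\Psi$, the graph $\widehat{t_n}$ becomes the graph of a holomorphic function $\tau_n\colon D(\lo,\eta)\cv\dd$ with $\tau_n(\lo)\cv 0$.

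I would then upgrade the unbranching at $\widehat{t_n}$ into pointwise disjointness from $\widehat q$ and $\widehat p$. If $\widehat q$ crossed $\widehat{t_n}$ at some parameter $\la_1$, then $t_n(\la_1)$, being a heteroclinic intersection in the unstable manifold of the saddle $m(\la_1)$ and exposed there (by the argument of Corollary~\ref{cor:particular}), would be the value at $\la_1$ of the graph $\widehat q\subset\widehat K$; Lemma~\ref{lem:particular} would then force $\widehat q=\widehat{t_n}$, contradicting $q(\lo)=p(\lo)\neq t_n(\lo)$. The same reasoning yields $\widehat{t_n}\cap\widehat p=\emptyset$. Thus each $\tau_n$ and each $\phi-\tau_n$ is nowhere vanishing on $D(\lo,\eta)$. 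Now the first application of Hurwitz is to the bounded non-vanishing sequence $(\tau_n)$: any normal-family cluster value is either nowhere zero or identically zero, and since $\tau_n(\lo)\cv 0$ only the latter can occur, so $\tau_n\cv 0$ uniformly on $D(\lo,\eta)$. The second application of Hurwitz is to $\phi-\tau_n$, which converges to $\phi$ through non-vanishing functions; since $\phi(\lo)=0$, we conclude $\phi\equiv 0$, and analytic continuation from $D(\lo,\eta)$ to $\La$ gives $q\equiv p$.

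The main obstacle I expect is the first step, namely securing the existence of the witnesses $t_n(\lo)\cv p(\lo)$ in $W^s_{r_0/2}(p(\lo))$. This genuinely uses the germ-level reading of exposedness (to place $p(\lo)$ inside the support of $T^-\wedge[W^s_\loc(p(\lo))]$) together with the density of heteroclinic intersections inside that support drawn from the laminarity theory of Bedford-Lyubich-Smillie. Everything downstream---Lemma~\ref{lem:particular}, Claim~\ref{claim:2}, Montel's theorem and two applications of Hurwitz---is routine once the witnesses are in hand.
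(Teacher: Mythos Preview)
Your proof is correct and follows essentially the same route as the paper: work inside the smooth surface $\widehat W^s_{r_0}(\widehat p)$, produce witness heteroclinic intersections $t_n(\lo)\to p(\lo)$ from exposedness via Definition-Proposition~\ref{defi:exposed}, continue them by Corollary~\ref{cor:particular}, use the unbranching there to get disjointness from $\widehat q$, and conclude by Hurwitz. The paper packages the endgame slightly more economically---it observes that a single Hurwitz application gives $\widehat t_k\to\widehat q$ for \emph{every} admissible $q$, so all such $q$ coincide by uniqueness of limits, avoiding your separate first pass showing $\tau_n\to 0$---and it also disposes of the trivial case $p(\lo)\in W^u(m(\lo))$ explicitly before assuming $t_n(\lo)\neq p(\lo)$, a small case distinction you should add.
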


\begin{proof}
By Definition-Proposition \ref{defi:exposed}, for a given  saddle point $m(\lo)$, there exist a sequence of 
transverse intersection points $(t_k(\lo))$ between $W^s_{r_0/2}(p(\lo))$ and $W^u(m(\lo))$ such that $t_k(\lo)\cv p(\lo) = q(\lo)$.
By Corollary \ref{cor:particular}, there exists a unique holomorphic continuation $t_k$ of $t_k(\lo)$ 
with the property  that for every $\la\in \La$, $t_k(\la)\in W^u(m(\la))\cap K(\la)$. 
In particular if $p(\lo)$ itself belongs to $W^u(m(\lo))$ we are done, so let us assume that $p(\lo)\notin W^u(m(\lo))$.
By the previous Claim \ref{claim:2}, for every $\la$ so that $\abs{\la -\lo}< \eta(r_0)$, the point $t_k(\la)$  belongs to
 $W^u(m(\la))\cap W^s_{r_0}(p(\la))$ and $q(\la)$ belongs to $W^s_{r_0}(p(\la))$.
 To conclude, we observe that since $p(\lo)=q(\lo)\notin W^u(m(\lo))$, applying Corollary \ref{cor:particular} again, we deduce that 
  $t_k(\la)$ is disjoint from $q(\la)$ for every $\la\in\La$.
  Working inside    $\widehat{W}^s_{r_0}\lrpar{\widehat{p}}$, which is a smooth complex surface, 
we can apply the Hurwitz Theorem to conclude that the sequence  $\widehat t_k$ converges to $\widehat q$.
 Therefore   the continuation $p$ is unique, which was the result to be proved.
\end{proof}

 \noindent {\bf Step 2.} The branched motion preserves s-regularity and exposure.
 
 \smallskip
 
Fix a relatively compact open set $\widetilde \La\Subset \La$. 
For $\lo\in \widetilde \La$, let $p(\lo)\in J^*_\lo$ be  s-regular and exposed. Thus there exists a sequence of saddle points $(p_n)$ 
with stable manifolds of size $r_0$, such that  $p_n(\lo) \cv p(\lo)$.  By Proposition \ref{prop:cvsize},
 $W^s_{r_0}(p_n(\lo))$ converges to $W^s_{r_0}(p(\lo))$ with multiplicity 1, that is, $W^s_{r_0}(p_n(\lo))$
  is a graph over $W^s_{r_0}(p(\lo))$ for large $n$. Since $p(\lo)$ is 
s-exposed,   for every $r$ we get that   $G^-\rest{W^s_r(p(\lo))}$ is not identically 0.   

 Let $\psi^s_{\lo, n}$ be a stable parameterization of $W^s (p_n(\lo))$ with $\|{(\psi^s_{\lo, n})'(0)} \|=1$, and 
 $(\psi_{\la, n}^s)$ be the natural continuation of $\psi_{\la_0, n}^s$, as defined in \S\ref{subs:param1}. 
 Recall the notation $h_\la^s$ for the holomorphic motion inside stable manifolds, 
 viewed inside  the parameterizations $(\psi_{\la, n}^s)$. Recall that $h_\la^s$ satisfies $h_\la^s(0) = 0$ and $h^s_\la(1)=1$ 
 (hence also the estimate \eqref{eq:holmotion}). 
By \eqref{eq:holmotion},  there exists $c>0$ such that for every $\la\in \widetilde\La$,
$(h_\la^s)^{-1}(D(0, cr_0))\subset D(0, r_0/4)$. 
 Without loss of generality, we can assume that $c<1/8$. 
Choose $c'<c$ so small that   for every $\la\in \widetilde \La$, $h_\la^s(D(0, c'r_0))\Subset D(0, cr_0)$

Fix a pair of holomorphic disks $D_1\Subset D_2$
 in $W^s_{c'r_0/4}(p(\lo))$, with $p(\lo)\in D_1$.
 Set $m =\modul(D_2\setminus \overline D_1)$, 
 and for $i=1,2$, put  $g_i = \sup G_\lo^-\rest{D_i}$. By the continuity of $G^-_\lo$ 
  and the multiplicity 1 convergence, for large $n$ 
  we can lift $D_1$ and $D_2$ to holomorphic disks $D_{1, n}$ and $D_{2, n}$ in $W^s_{c'r_0/4}(p_n(\lo))$, such that  
 $\modul(D_{2, n}\setminus \overline D_{1, n})\cv m$ and $\sup G_\lo^-\rest{D_{i, n}}\cv g_i$.
 From Lemma \ref{lem:unstable}
 we infer that $\psi^s_{\lo, n} (D(0, c'r_0)) \supset W^s_{c'r_0/4}(p_n(\lo))$, so in particular  
 $\psi^s_{\lo, n} (D(0, c'r_0))$ contains $D_{1, n}$ and $D_{2, n}$. 
 
 \medskip
 
Now  fix another parameter $\la_1\in \widetilde \La$. 
 By the first step of the proof we know that $p_n(\la_1)\cv p(\la_1)$. 
 Applying Proposition \ref{prop:uniform} we infer that there exist positive constants  $r_1$, $g$ and $A$  such
that $W^s_{r_1}(p_n(\la_1))$ is a submanifold properly embedded into $B(p_n(\la_1), r_1)$, contained in 
$h_{\la_1}(D_{1,n})$,  with area at most $A$, and 
$\sup \big({G^-_{\la_1}\rest{W^s_{r_1}(p_n(\la_1))}}\big)\geq g$. 
Using Bishop's Theorem, we extract a subsequence $n_j$ so that $(W^s_{r_1}(p_{n_j}(\la_1)))_j$ converges to some analytic set 
$W\ni p(\la_1)$ with $\sup \big({G^-_{\la_1}\rest{W}}\big)\geq g$. 

\medskip

The main step of the proof is the following lemma. 

\begin{lem}\label{lem:multiplicity}
The multiplicity of convergence of $W^s_{r_1}(p_{n_j}(\la_1))$ to $W$  is equal to 1. 
\end{lem}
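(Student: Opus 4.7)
The plan is to exhibit a univalent limit $\Psi_{\la_1}$ of the natural continuations $\psi^s_{\la, n_j}$ at $\la = \la_1$ and then read off multiplicity $1$ from its injectivity. First I would renormalize $\psi^s_{\lo, n}$ so that $\|(\psi^s_{\lo, n})'(0)\| = 1$, and exploit the remaining rotational freedom in the parameterization to arrange that (after passing to a further subsequence of $(n_j)$) the base point $q_n(\lo) := \psi^s_{\lo, n}(1)$ of the natural continuation converges to some $q^*(\lo) \in W^s_{\loc}(p(\lo))$ with $G^-_\lo(q^*(\lo)) > 0$; such a choice is possible by condition $(ii)$ of Definition-Proposition \ref{defi:exposed}, which guarantees that $G^-_\lo$ is not identically zero on $W^s_{\loc}(p(\lo))$. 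Proposition \ref{prop:uniformparam} then bounds $\psi^s_{\la, n}$ uniformly on $D(0, c r_0)$ for $\la \in \widetilde\La$, and Montel's theorem with a diagonal extraction produces, along a further subsequence, a jointly holomorphic limit $\Psi_\la : D(0, c r_0) \to \cd$ with $\psi^s_{\la, n_j} \to \Psi_\la$ uniformly on compacts and $\Psi_\la(0) = p(\la)$.

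Next, I would show $\Psi_{\la_1}$ is univalent. Each $\psi^s_{\la_1, n_j}$ is injective on $\cc$ as a parameterization of a stable manifold, so Hurwitz's theorem forces $\Psi_{\la_1}$ to be either univalent or constant. To exclude constancy, Lemma \ref{lem:harnack} yields $G^-_{\la_1}(q_{n_j}(\la_1)) \geq C^{-1} G^-_\lo(q_{n_j}(\lo))$, which has a positive lower limit by the choice of $q^*(\lo)$. Hence $\psi^s_{\la_1, n_j}(1) = q_{n_j}(\la_1)$ stays at uniformly positive distance from $K_{\la_1} = \{G^-_{\la_1} = 0\}$, and in particular from $p(\la_1) \in K_{\la_1}$. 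Therefore $\Psi_{\la_1}(1) \neq \Psi_{\la_1}(0) = p(\la_1)$, so $\Psi_{\la_1}$ is non-constant, hence univalent.

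Finally, to deduce multiplicity $1$: by the Koebe-type Lemma \ref{lem:unstable}, the disk $\psi^s_{\la_1, n_j}(D(0, r_1/4))$ lies in $W^s_{r_1}(p_{n_j}(\la_1))$ and covers the local component through $p_{n_j}(\la_1)$ inside a bidisk of size $\sim r_1$. Its limit $\Psi_{\la_1}(D(0, r_1/4))$ is then a smooth embedded disk in $W$ through $p(\la_1)$, and the convergence of these disks is of multiplicity $1$ by univalence of $\Psi_{\la_1}$. Since $W$ is irreducible by Proposition \ref{prop:disk}, it is smooth at $p(\la_1)$ and $p(\la_1)$ is a regular point of $W$; the multiplicity of convergence, being constant on $\mathrm{Reg}(W)$, therefore equals $1$ globally. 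The main technical obstacle is precisely in this last step: ruling out ``extra'' sheets of $W^s_{r_1}(p_{n_j}(\la_1))$ near $p(\la_1)$ not captured by $\psi^s_{\la_1, n_j}|_{D(0, r_1/4)}$. This is handled by combining the injectivity of $\psi^s_{\la_1, n_j}$ on $\cc$ with the graph-of-slope-at-most-$1$ property of the local component of $W^s_{r_1}(p_{n_j}(\la_1))$ inherited from the argument in Proposition \ref{prop:surface}, which forces any nearby sheet to coincide with the one already parameterized.
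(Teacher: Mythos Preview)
Your argument has a genuine gap at the Hurwitz step. The principle ``a locally uniform limit of injective holomorphic maps is injective or constant'' holds for maps into $\cc$, but \emph{fails} for maps into $\cc^2$: take $\phi_n(z)=(z^2,\,z/n)$ on $\dd$, which is injective for every $n$ (the second coordinate separates points), yet $\phi_n\to(z^2,0)$, which is non-constant and $2$-to-$1$. So from the injectivity of each $\psi^s_{\la_1,n_j}$ and the non-constancy of $\Psi_{\la_1}$ you cannot conclude that $\Psi_{\la_1}$ is univalent; the possibility that $\Psi_{\la_1}$ has degree $k>1$ onto $W$ is precisely what has to be excluded. Your final paragraph does not rescue this either: Lemma~\ref{lem:unstable} requires the normalization $\norm{(\psi^s_{\la_1,n})'(0)}=1$, which the natural continuation does not satisfy at $\la_1$, and the slope-$\leq 1$ graph property from Proposition~\ref{prop:surface} is only established for $\abs{\la-\lo}<\delta$, not for an arbitrary $\la_1\in\widetilde\La$.

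The paper's proof circumvents this by exploiting the one parameter where injectivity of the limit \emph{is} available, namely $\lo$: there the size-$r_0$ hypothesis on $W^s(p_n(\lo))$ together with Lemma~\ref{lem:unstable} forces the limit $\varphi_\lo$ to be univalent on $D(0,r_0/4)$. Assuming $k>1$, one uses s-exposure to pick a regular value $q(\la_1)\in W\cap W^u(m(\la_1))$ of $\varphi_{\la_1}$ with two distinct preimages $a,b$, lifts these to heteroclinic points $\psi^s_{\la_1,n}(a_n),\psi^s_{\la_1,n}(b_n)$, and then \emph{transports them back to $\lo$} via the normalized intrinsic motion $(h^s_{\la_1})^{-1}$. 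The constants $c,c''$ are arranged so that $(h^s_{\la_1})^{-1}(a),(h^s_{\la_1})^{-1}(b)\in D(0,r_0/4)$; injectivity of $\varphi_\lo$ on that disk then yields two \emph{distinct} points of $W^s_{r_0}(p(\lo))\cap W^u(m(\lo))$ whose continuations collide at $\la_1$, contradicting Corollary~\ref{cor:particular}. The missing ingredient in your approach is exactly this transport back to $\lo$ through the holomorphic motion.
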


Before establishing the lemma let us show how to conclude the proof of Step 2. Recall that by 
 the Maximum Principle  $W^s_{r_1}(p_{n_j}(\la_1))$ is a holomorphic disk. Since the multiplicity of convergence is 1,
  we deduce  from 
Proposition \ref{prop:disk} that $W$ is smooth.  Also  Proposition \ref{prop:cv} 
  implies that the sequence $W^s_{r_1}(p_n(\la_1))$ actually converges. 

Since $W$ is smooth at $p(\la_1)$ and the multiplicity of convergence is 1, we see that in any small neighborhood of $p(\la_1)$,  
$W^s_{r_1}(p_n(\la_1))$ is a graph over $W$ for large $n$. In particular $W^s_{r_1}(p_n(\la_1))$ has size uniformly bounded 
from below, and 
therefore  $p(\la_1)$ is regular. We   already observed that $\sup \big({ G^-_{\la_1}\rest{W}}\big)\geq g>0$, and the same holds in 
any neighborhood of $p(\la_1)$ by choosing  a smaller $r_0$  at the beginning.   Hence $p(\la_1)$ 
is s-exposed, which finishes the proof of Step 2.

\begin{proof}[Proof of Lemma \ref{lem:multiplicity}]
For notational ease we put $n_j=n$. Let $k$ be the multiplicity of convergence of $W^s_{r_1}(p_{n}(\la_1))$ to $W$.
 By Proposition \ref{prop:uniformparam}, we know that for every $\la\in \widetilde \La$, 
  $\|{\psi_{\la, n}^s }\| \leq M$ on $\widetilde \La \times D(0, cr_0)$, so we can extract a converging subsequence (still denoted by $n$)
 to a limiting map $\varphi_\la(\cdot)$. Notice that for $\la = \lo$, 
 $(\psi_{\la, n}^s)_n $ converges on $D(0, r_0/4)$ to an injective map $D(0, r_0/4)\cv W^s_r(p(\lo))$
    
We recall that  $D_{2,n}\subset  \psi^s_{\lo, n}(D(0, c'r_0))$, so by   definition of $c'$ we get that  for every $\la$, 
 $\psi_{\la, n}^s\lrpar{D(0, cr_0)}$ contains $h_{\la}(D_{2,n})$.  It follows that for $\la = \la_1$,    
 $W^s_{r_1}(p_n(\la_1))\subset \psi_{\la_1, n}^s\lrpar{D(0, cr_0)}$. 
 
 Furthermore, since $\modul(D_{2, n}\setminus D_{1,n}) \cv m>0$, there exists   a uniform $c''<c$ such that 
$W^s_{r_1}(p_n(\la_1))\subset \psi_{\la_1, n}^u\lrpar{D(0, c''r_0)}$. It follows that $\varphi_{\la_1}$ is non-constant and that 
  the component $\om$ of $0$ in $\varphi_{\la_1}^{-1}(W)$, is such that  $\varphi_{\la_1}:\om\cv W$ is proper. 
Its degree is equal to the mutiplicity of convergence $k$.  

\medskip

Since $G^+_{\la_1}\rest{W}$ (resp. $G^+_{\la_1}\circ \varphi_{\la_1}$)
 is  continuous and not harmonic, its Laplacian is nonzero and gives no mass to points.
  By the s-exposure assumption,  Definition-Proposition \ref{defi:exposed} ensures the existence of 
  a saddle point $m(\la_1)$ whose unstable manifold intersects transversally $W$ at a certain point $q(\la_1)$ which is a 
  regular value of $\varphi_{\la_1}$. 
 
Now if $k>1$, there exist two distinct points $a$ and  $b$ in $\om \subset D(0, cr_0)$ 
such that $\varphi_{\la_1} (a) = \varphi_{\la_1}(b) = q(\la_1)$. 
Thus there exists $a_n \cv a$ (resp. $b_n\cv b$) such that $\psi_{\la_1, n}^s(a_n)$ (resp. $\psi_{\la_1, n}^s(b_n)$ ) are intersection points of 
$W^u(m(\la_1))$ and $W^s_{r_1}(p_n(\la_1))$  converging to $q(\la_1)$. 

\medskip

To conclude the proof, we flow back to $\lo$ using the holomorphic motion to obtain a contradiction with Corollary \ref{cor:particular}. 
The details are as follows. 
Consider the continuations of the heteroclinic intersections  $\psi_{\la_1, n}^s(a_n)$
and $\psi_{\la_1, n}^s(b_n)$ for $\la\in \widetilde \La$. 
Notice that they stay in a compact piece of $W^u(m(\la))$. For $\la=\lo$, the corresponding points are 
$\psi_{\lo, n}^s((h_{\la_1}^s)^{-1}(a_n))$ and $\psi_{\lo, n}^s((h_{\la_1}^s)^{-1}(b_n))$, which converge respectively 
to $\varphi_\lo((h_{\la_1}^s)^{-1}(a))$ and $\varphi_\lo((h_{\la_1}^s)^{-1}(b))$. Now
 $(h_{\la_1}^s)^{-1}(a)$ and $(h_{\la_1}^s)^{-1}(b)$ are distinct and by definition of $c$, they belong 
to $D(0, r_0/4)$. On this disk, $\varphi_\lo$ is injective therefore  $\varphi_\lo((h_{\la_1}^s)^{-1}(a))$ and $
\varphi_\lo((h_{\la_1}^s)^{-1}(b))$ are distinct 
 intersection points between $W^s_r(p(\lo))$ and $W^u(m(\lo))$ with continuations colliding at 
$\la_1$. This contradicts Corollary \ref{cor:particular}, and concludes the proof of the lemma. 
\end{proof}

\begin{rmk}
The  proof does not give any estimate on the size $r_1$ of the stable manifold $W^s(p(\la_1))$  at $p(\la_1)$. 
 Indeed, $r_1$ depends on the size of $W$ at $p(\la_1)$, upon which we have no control 
 (the only information we have is    a  local area bound).
In particular it is unclear whether $r_1$ depends only on $r_0$ and $\widetilde \La$. 
 \end{rmk}

 \begin{rmk}
 As opposed to Step 1 of the proof, 
 Step 2 does  not become significantly easier if we assume that $p(\lo)$ is regular instead of s-regular and exposed. 
 Indeed, the  whole point is to prove that $p(\la)$ remains s-regular throughout $\La$.
 \end{rmk}

\noindent{\bf Step 3.} Conclusion.
  \smallskip
 
 We have shown in Steps 1 and 2 that if $p(\lo)$ is s-regular and exposed, then it admits a unique holomorphic continuation 
 $\widehat p\subset \widehat K$ such that for every $\la$, $p(\la)$ is s-regular and exposed, too. Thus the branched motion of $J^*$ must be unbranched, in particular continuous, at $(\la, p(\la))$. In particular 
  $p(\la)$ cannot collide with the continuation of any other point in $J^*$, so we indeed have a continuous holomorphic motion of $\mathcal{R}^s$. This finishes the  proof of Theorem \ref{thm:regular}.  
  \end{proof}
  
  Let us note for future reference the following consequence of the proof.
  
 \begin{prop} \label{prop:strong regular}
  Let $(f_\la)$ be a weakly stable holomorphic family of polynomial automorphisms. Assume that for $\la = \lo$, 
   $p(\la_0)$ is a regular point and $(p_n(\lo))$ is a sequence  of saddle points converging to $p(\lo)$ such that 
    $W^s(p_n(\lo))$ (resp. $W^u(p_n(\lo))$) is of size $r_0$ at $p_n(\lo)$. 
    Then for every $\la_1\in \La$, there exists $r_1>0$ such that 
     $p_n(\la_1)\cv p(\la_1)$ and $W^s(p_n(\la_1))$  (resp. $W^u(p_n(\la_1))$)
      is of size $r_1$ at $p_n(\la_1)$. 
 \end{prop}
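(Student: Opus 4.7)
The plan is to prove the two conclusions in turn: first, that the saddle points $p_n(\la_1)$ actually converge to $p(\la_1)$; second, that their stable (resp.\ unstable) manifolds retain a uniform geometric size at $p_n(\la_1)$. The first point will follow quickly from the uniqueness clause of Theorem~\ref{thm:regular}, while the second is essentially built into Step~2 of the proof of that theorem and should be recovered by re-running the same compactness argument at the parameter $\la_1$.

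For the convergence, I would argue as follows. By weak stability each $\widehat p_n = \set{(\la, p_n(\la)):\la\in\La}$ is a holomorphic graph contained in $\widehat K$, and these graphs are locally uniformly bounded over $\La$. By normality, any subsequence admits a further subsequence converging to some holomorphic graph $\widehat q\subset \widehat K$ with $q(\lo) = \lim p_n(\lo) = p(\lo)$. Since $p(\lo)$ is regular, it is in particular $s$-regular and exposed, so Theorem~\ref{thm:regular} states that the continuation of $p(\lo)$ as a point remaining in $\widehat K$ is unique; hence $\widehat q = \widehat p$. All cluster values coinciding with $\widehat p$, the full sequence $\widehat p_n$ converges to $\widehat p$ locally uniformly on $\La$, and in particular $p_n(\la_1)\cv p(\la_1)$.

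For the uniform size, I would replay Step~2 of the proof of Theorem~\ref{thm:regular} with the stable (resp.\ unstable) direction at the specific parameter $\la_1$. Since $p(\lo)$ is regular, hence $s$- and $u$-exposed, I can pick a pair of disks $D_1\Subset D_2$ in $W^s_\loc(p(\lo))$ on which $G^-_\lo$ is not identically zero, lift them via Proposition~\ref{prop:cvsize} (multiplicity one convergence) to disks $D_{1,n}\Subset D_{2,n}$ in $W^s(p_n(\lo))$ with controlled modulus and controlled supremum of $G^-_\lo$, and then apply Proposition~\ref{prop:uniform} at $\la_1$. This gives an $r_1 > 0$ such that $W^s_{r_1}(p_n(\la_1))$ is properly embedded in $B(p_n(\la_1), r_1)$, has area at most a uniform constant $A$, and satisfies $\sup\bigl(G^-_{\la_1}\rest{W^s_{r_1}(p_n(\la_1))}\bigr) \geq g > 0$. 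Bishop's Theorem~\ref{thm:bishop} then extracts a subsequential limit $W\ni p(\la_1)$.

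The decisive step—and the only potential obstacle—is that the multiplicity of convergence of $W^s_{r_1}(p_{n_j}(\la_1))$ to $W$ is equal to $1$. This is precisely Lemma~\ref{lem:multiplicity}, whose proof proceeds by lifting two putative preimages of a generic point of $W$ back to $\lo$ via the natural holomorphic motion along unstable parameterizations, and extracting a contradiction with the no-collision statement of Corollary~\ref{cor:particular}. The argument is already carried out in the proof of Theorem~\ref{thm:regular} and can be quoted verbatim here. Once multiplicity~$1$ is in hand, Proposition~\ref{prop:disk} yields smoothness of $W$ at $p(\la_1)$, Proposition~\ref{prop:cv} promotes the subsequential convergence to convergence of the full sequence, and multiplicity one combined with smoothness forces $W^s_{r_1}(p_n(\la_1))$ to be a graph of small slope over $T_{p(\la_1)}W$ in some fixed neighborhood of $p(\la_1)$ for all $n$ large enough. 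Shrinking $r_1$ if necessary, this gives a uniform size $r_1$ at $p_n(\la_1)$, and the same argument applied to the unstable side (exploiting $u$-exposure of $p(\lo)$) handles that case.
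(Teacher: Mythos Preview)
Your proposal is correct and matches the paper's approach exactly: the paper does not give a separate proof but simply records Proposition~\ref{prop:strong regular} as a consequence of the proof of Theorem~\ref{thm:regular}, and you have correctly identified that the convergence $p_n(\la_1)\cv p(\la_1)$ is the unbranching statement obtained in Step~1, while the uniform size at $\la_1$ is precisely the content of Step~2 (via Proposition~\ref{prop:uniform}, Lemma~\ref{lem:multiplicity}, and Propositions~\ref{prop:disk} and~\ref{prop:cv}).
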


We now show that regular points (resp. transverse regular points) remain regular (resp. transverse).

\begin{proof}[Proof of Corollary \ref{cor:transverse}] 
It follows from Theorem \ref{thm:regular} that the regular points move without collision, 
and remain s- and u-regular and exposed in both directions.  Let $p(\lo)$ be regular relative to  $f_\lo$. Then  for every $\la\in \La$, $p(\la)$ is s- and u- regular, so it possesses local stable and unstable manifolds. If $W^u_\loc (p(\la_1))$ was to coincide with $W^s_\loc (p(\la_1))$, we would get that 
 $W^u_\loc (p(\la_1)) = W^s_\loc (p(\la_1)) \subset K_{\la_1}$, thus contradicting s- and u-exposure. Therefore $p(\la_1)$ is regular. 
 
 To show that transverse regular points stay transverse, recall from the proof of Theorem \ref{thm:regular} that if $p(\lo)$ is regular, then 
 $W^s_\loc(p(\lo))$ and   $W^u_\loc(p(\lo))$ can be locally continued as smooth surfaces $\widehat W^s_\loc(\widehat p)$ and 
 $\widehat W^u_\loc(\widehat p)$ in  
 $N(\lo)\times B(p(\lo),r)$. Now assume that $W^s_\loc(p(\lo))$ and   $W^u_\loc(p(\lo))$ are tangent
  at $p(\lo)$, that is, their intersection multiplicity at $  p(\lo))$ is $m>1$. 
  If this tangency does not persist for nearby parameters, by the persistence of  proper intersections, we get that for nearby $\la$, 
  $W^s_\loc(p(\la))$ and   $W^u_\loc(p(\la))$ intersect at $m$ points counting multiplicities, not all identical to $p(\la)$. 
 
 \medskip 
 
 Consider the intersection $\widehat C = \widehat W^s_\loc(\widehat p)\cap \widehat W^u_\loc(\widehat p)$. This is a curve 
   in $N(\lo)\times B(p(\lo),r)$ such that $\widehat C\cap \cc^2_\lo = \set{p(\lo)}$. 
   One irreducible  component of $\widehat C$ is given by the continuation $\widehat p$, and by assumption there exists another irreducible component $\widehat C'$ of $\widehat C$.
    
   Assume first that $\widehat C'$ 
 	is a  graph $\widehat q$ over $N(\lo)$.  Then, since for every $\la\in N(\lo)$, 
    $  q(\la)\in W^s_\loc(p(\la))\cap W^u_\loc(p(\la)) \subset  K_\la$, we get a collision between $ p$ and a
     holomomorphically moving point $q$ staying in $K$, which contradicts Theorem \ref{thm:regular}.
 
We will reduce the general case to this one by  a classical trick: replacing $\La$ by  a well-suited  
 branched cover   $M\cv \La$. We detail the argument for the   convenience of the reader.
Consider a local  irreducible component of $\widehat C'$ at $(\lo, p(\lo))$, still denoted by  $\widehat C'$ for simplicity. 
Denote by $\varpi: \dd\cv\widehat C'$ a normalization of $\widehat C'$ such that $\varpi(0)  =  (\lo, p(\lo))$. 
Denote respectively  by $\pi_\La$ and $\pi_\cd$ the projection onto the first and second
factors in $\La \times \cd$ and put $M = \dd$ and 
$\la(\mu)  = \pi_\La\circ \varpi (\mu)$. Then we can consider the holomorphic family of polynomial 
automorphisms defined by $(\widetilde f_\mu) := (f_{\la(\mu)})$, which is weakly stable
 (of course,  nothing has changed from the dynamical point of view). 
For $\mu= 0$, the point $p(\la(0))$ is regular and can be continued as a regular point $\mu \mapsto p(\la(\mu))$ as before. 
But now we have another holomorphic continuation of $(0,p(\la(0)))$  in $\widehat K\subset M\times \cd$
 given  by $\mu\mapsto (\mu, \pi_\cd\circ \varpi(\mu))$.
  Thus we arrive at a contradiction and the proof is complete. 
\end{proof}
 
 \begin{rmk}
 A similar argument shows that more generally the order of tangency between local stable and unstable manifolds of regular points is preserved in weakly stable families.  
 \end{rmk}

\section{Propagation of hyperbolicity}\label{sec:hyperbolic}
 
In this section we establish  Theorem \ref{theo:hyp}, that is, we prove that uniform 
hyperbolicity on $J^*$ is preserved in weakly stable families. Let us start with a variation on Definition \ref{defi:usregular}.

\begin{defi}\label{defi:uniformly regular}
We say that $p\in J^*$ is uniformly u-regular  (resp. uniformly s-regular) if there exists $r>0$ such that for every sequence of 
saddle points $(p_n)$ converging to  $p$, $W^u(p_n)$ (resp. $W^s(p_n)$) is of size $r$ at $p_n$. 

Likewise, $p$ is uniformly (resp. transverse) regular
 if it is (resp. transverse) regular and uniformly regular in both stable and unstable directions. 
\end{defi}

 If necessary we will specify the size appearing in the definition by saying that ``$p\in J^*$ is uniformly u-regular of size $r$". Recall from Proposition \ref{prop:cvsize} that if $p$ is uniformly u-regular of size $r$, then it has a well defined local unstable manifold $W^u_r(p)$ and 
 that if $p_n\cv p$ is any sequence of saddle points, $W^u_r(p_n)$ converges to $W^u_r(p)$ with multiplicity 1.  

\medskip

If $f$ is uniformly hyperbolic on $J^*$, then every $p\in J^*$ is uniformly regular and transverse. Interestingly enough, the converse is true. 

\begin{prop}\label{prop:criterion}
Let $f$ be a polynomial automorphism of $\cd$ with   dynamical degree $d\geq 2$. If every point in $J^*$ is   uniformly regular and transverse then $f$ is uniformly hyperbolic on $J^*$.
\end{prop}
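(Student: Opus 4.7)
The plan is to promote the local manifolds provided by uniform regularity into a continuous $Df$-invariant splitting of uniform size and transversality on $J^*$, and then to appeal to the geometric hyperbolicity criterion of Bedford--Smillie \cite{bs8}.

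The first step is to extract a uniform size $r_0>0$ from the compactness of $J^*$. For each $r>0$, the set $U^u_r\subset J^*$ of points that are uniformly u-regular of size $r$ is open in $J^*$, since $p\in U^u_r$ if and only if some neighborhood $V_p$ of $p$ in $J^*$ contains only saddles $q$ with $W^u(q)$ of size at least $r$ at $q$, and that same neighborhood witnesses $V_p\subset U^u_r$. By assumption $J^*=\bigcup_{r>0}U^u_r$, so compactness yields a uniform $r^u_0>0$; the symmetric argument in the stable direction gives a common $r_0>0$ for which every $p\in J^*$ is uniformly $s$- and $u$-regular of size $r_0$.

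Proposition \ref{prop:cvsize} now attaches to every $p\in J^*$ unique local stable and unstable manifolds $W^{s/u}_{r_0}(p)$, obtained as multiplicity-one limits of the corresponding manifolds along approaching saddle points. A standard diagonal argument (approximating a sequence $p_n\to p$ in $J^*$ by saddles $q_n$ with $q_n\to p$ as well) yields continuity of $p\mapsto W^{s/u}_{r_0}(p)$ on $J^*$, so the tangent distributions $E^{s/u}(p)=T_p W^{s/u}_{r_0}(p)$ form a continuous splitting $T_p\cd=E^s(p)\oplus E^u(p)$ over $J^*$, transverse at every point by the standing assumption, hence with angle bounded below by some $\theta_0>0$ on the compact set $J^*$. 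The invariance $Df_p(E^u(p))=E^u(f(p))$ (and its stable counterpart) is obtained by passing to the limit from saddle points, where it is automatic from $f(W^u(q))=W^u(f(q))$.

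At this stage the hypotheses of the Bedford--Smillie hyperbolicity criterion from \cite{bs8} are in place: $f$ is quasi-expanding and quasi-contracting on $J^*$ in the sense that every saddle possesses $W^{s}$ and $W^{u}$ of size $\geq r_0$, and the resulting local stable and unstable laminations on $J^*$ are uniformly transverse. Invoking that criterion then yields uniform hyperbolicity of $J^*$. I expect this last invocation to be the main obstacle of the proof, since converting the geometric data (continuous invariant transverse splitting with uniform-sized local manifolds) into genuine uniform exponential contraction on $E^s$ and expansion on $E^u$ is precisely the technical heart of \cite{bs8}.
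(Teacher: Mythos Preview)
Your approach is essentially the same as the paper's: extract a uniform size $r_0$ by compactness of $J^*$, organize the resulting local manifolds into transverse stable/unstable laminations near $J^*$, and then invoke the Bedford--Smillie criterion from \cite{bs8}. The paper packages the first two steps into a separate lemma (showing directly that $\bigcup_{p\in J^*} W^u_{\loc}(p)$ is a lamination in a neighborhood of $J^*$, using that local unstable manifolds of distinct saddles are disjoint or coincide) and skips the explicit construction of the continuous splitting, which is not needed once one has the laminations.

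Two small points deserve attention. First, your identification of the hypothesis with ``quasi-expanding and quasi-contracting'' is not quite accurate: quasi-expansion in \cite{bs8} involves uniform area bounds and uniform lower bounds on $G^+$ restricted to local unstable disks, not merely a uniform size of $W^u$. The relevant criterion is rather Theorem~8.3 of \cite{bs8}, which concludes hyperbolicity from the existence of transverse stable and unstable \emph{laminations} near $J^*$. Second, and this is a point the paper flags explicitly, Theorem~8.3 of \cite{bs8} is stated for laminations of $J^+$ and $J^-$ in a neighborhood of $J^*$, whereas here the laminations are only known to cover $J^*$ itself. One must check (as the paper does) that the proof in \cite{bs8} only uses this stronger hypothesis to ensure that each $W^u_{\loc}(p)$ lies in a leaf of the unstable lamination, which is automatic in the present setup.
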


The main step of the proof is the following  lemma. 

\begin{lem}\label{lem:lamination}
Let $f$ be a polynomial automorphism of $\cd$, such that every point in $J^*$ is uniformly u-regular. 
Then there exists a neighborhood $N$ of $J^*$ such that the restriction to $N$ of $\bigcup_{p\in J^*} W^u_\loc(p)$ forms  a lamination. 
\end{lem}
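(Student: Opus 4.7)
The plan is to verify the three defining properties of a lamination for the collection $\{W^u_\loc(p):p\in J^*\}$: a uniform plaque size, Hausdorff-continuity of the assignment $p\mapsto W^u_\loc(p)$, and the disjoint-or-coinciding dichotomy for plaques through distinct points. The neighborhood $N$ is then obtained as any small tubular neighborhood of $J^*$ in which these plaques fit together with the standard local product structure.

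Uniform size follows from a compactness argument: the set $U_r$ of points of $J^*$ admitting a neighborhood in which every saddle has local unstable manifold of size $r$ is by definition open in $J^*$, and by the uniform $u$-regularity hypothesis $J^*=\bigcup_{r>0} U_r$, so compactness produces a uniform $r_0>0$ with $J^*\subset U_{r_0}$. Hausdorff-continuity of $p\mapsto W^u_{r_0}(p)$ on $J^*$ then follows from Proposition~\ref{prop:cvsize} together with a diagonal extraction to handle sequences in $J^*$ that are not made of saddles, yielding in addition multiplicity-one convergence of the approximating saddle plaques.

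The heart of the proof is the disjoint-or-coinciding property. Suppose $z\in W^u_{r_0}(p)\cap W^u_{r_0}(q)$ for distinct $p,q\in J^*$ and assume the two plaques do not coincide near $z$; as smooth irreducible analytic curves in $\cd$ they then meet properly at $z$ with some multiplicity $m\ge 1$. The crucial dynamical input is that any saddle periodic point lying on the global unstable manifold $W^u(p_n)$ of a saddle $p_n$ must itself belong to $\mathcal O(p_n)$, because for $z\in W^u(p_n)$ the backward orbit $f^{-k}(z)$ tends to $\mathcal O(p_n)$, which is incompatible with periodicity off the orbit. Since $\mathcal O(p_n)$ is finite and saddles are dense in $J^*$, I can pick saddle sequences $p_n\to p$ and $q_n\to q$ with $q_n\notin\mathcal O(p_n)$ for each $n$; then $W^u(p_n)\cap W^u(q_n)=\emptyset$, so in particular $W^u_{r_0}(p_n)$ and $W^u_{r_0}(q_n)$ are disjoint holomorphic disks. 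Invoking the persistence of proper intersections of analytic sets under small perturbations (Chirka), together with the multiplicity-one convergences $W^u_{r_0}(p_n)\to W^u_{r_0}(p)$ and $W^u_{r_0}(q_n)\to W^u_{r_0}(q)$ supplied by step~(ii), the hypothesised proper intersection of multiplicity $m$ at $z$ must be inherited by the approximants for large $n$, contradicting their disjointness.

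The main obstacle I anticipate is precisely this transfer of the intersection from the limit plaques down to the approximating ones: it requires both convergences to be of multiplicity exactly one (otherwise the limiting intersection need not persist in the sequence), which is what Proposition~\ref{prop:cvsize} provides. Once the dichotomy is established, the family $\{W^u_{r_0}(p):p\in J^*\}$ is closed, its leaves vary continuously with the base point and distinct leaves are disjoint wherever defined, so a local transversal to the unstable direction at any given base point trivialises the family as a product, yielding the lamination structure on a suitable tubular neighborhood $N$ of $J^*$.
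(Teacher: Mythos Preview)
Your proof is correct and follows the same overall strategy as the paper: compactness of $J^*$ gives a uniform lower bound on the size of local unstable manifolds, and the lamination structure then comes from the disjoint-or-coincide dichotomy for these plaques.

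The only organizational difference is in how the dichotomy is obtained. You prove it directly for the plaques $W^u_{r_0}(p)$ of \emph{all} points $p\in J^*$, via the persistence of proper intersections applied to approximating saddle plaques. The paper instead restricts to saddle points, for which the dichotomy is immediate (global unstable manifolds of saddles on distinct orbits are disjoint, since backward orbits converge to the respective periodic orbits), and then simply takes the closure $\overline{\bigcup_{q\text{ saddle}} W^u_r(q)}$ inside each small ball. This is slightly more economical, as it avoids the continuity and proper-intersection arguments altogether; on the other hand, your route makes the continuity of $p\mapsto W^u_{r_0}(p)$ on all of $J^*$ explicit, which is a useful byproduct. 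One small expository point: your ``crucial dynamical input'' is really just the statement that $z\in W^u(p_n)\cap W^u(q_n)$ forces $\mathcal O(p_n)=\mathcal O(q_n)$ (since $f^{-k}(z)$ must accumulate on both orbits); the detour through ``a saddle on $W^u(p_n)$ lies on $\mathcal O(p_n)$'' is unnecessary.
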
 

\begin{proof}
Let us start by showing that the size of unstable manifolds is uniformly bounded from below. 
For this, notice that Definition \ref{defi:uniformly regular} may be reformulated as follows: $p$ is uniformly u-regular if there 
exists $r>0$ and $\e>0$ such that if $q\in B(p,\e)$ is any saddle point, then $W^u(q)$ is of size $r$ at $q$. Then
 by compactness of $J^*$, we can cover $J^*$ with finitely many such balls, and  deduce that if every point in $J^*$ is uniformly u-regular, then the size of unstable manifolds of saddle points 
 is uniformly bounded form below, as claimed.
 
From this point, the remainder of the proof is classical. As observed above, for every $p\in J^*$ there 
exists $r>0$ and $\e>0$ such that if $q\in B(p,\e)$ is any saddle point, then $W^u_r(q)$ is of size $r$ at $q$. Taking $\e$ smaller if needed, 
we may assume that $W^u_r(q)$ is closed in $B(p, \e)$. Furthermore, any two such local unstable manifolds of saddle points are disjoint or 
coincide. Thus taking the closure, we get that $\overline{\bigcup  W^u_r(q)}\cap B(p, \e)$ is a lamination in $B(p,\e)$, 
where the union ranges over all saddle points lying  in $B(p, \e)$. The result follows.
\end{proof}

\begin{proof}[Proof of Proposition \ref{prop:criterion}]
The result is  essentially a  direct consequence of Theorem 8.3. in \cite{bs8}, 
which asserts that if there exist    laminations  
of $J^+$ and $J^-$ in a neighborhood of $J^*$, 
which are transverse at every point of $J^*$ then $f$ is uniformly hyperbolic on $J^*$. 

In our situation, the existence of stable and unstable   laminations $\mathcal L^s$ and $\mathcal L^u$ 
is guaranteed by Lemma \ref{lem:lamination}, while these laminations are transverse at every point of $J^*$ by assumption. 

Unfortunately, this is slightly different  from the hypotheses of \cite[Thm. 8.3]{bs8} 
because we do not know that the lamination $\mathcal L^u$ 
  fills up the whole $J^-$ in a neighborhood of $J^*$. However, 
the reader will easily check that the only place in  the  proof of \cite{bs8} where this assumption is used 
 is to ensure that for every $p\in J^*$, $W^u_\loc(p)$ is contained
 in a leaf of $\mathcal L^u$, which is trivially satisfied in our case. Hence the result
 applies and we are done.
 \end{proof}

We now have all the necessary ingredients for Theorem \ref{theo:hyp}.

\begin{proof}[Proof of Theorem \ref{theo:hyp}]
By assumption, every point in $J^*_\lo$ is uniformly regular and transverse. 
From Corollary \ref{cor:transverse}  we deduce that  $J^*$ moves holomorphically and all points remain regular and transverse.
Proposition \ref{prop:strong regular} implies  that strong s- and u-regularity are preserved as well. Therefore, for every $\la\in \La$, 
every point in $J^*_\la$ is uniformly regular and transverse, so the result follows from Proposition \ref{prop:criterion}. 
\end{proof}

The concept of quasi-expansion, developed in \cite{bs8} has been a source of inspiration for the techniques in this paper. A polynomial automorphism of $\cd$ of dynamical degree $d\geq 2$ is {\em quasi-expanding} if there exists positive constants $r$ and  
 $A$   such that for every saddle point $p$, $W^u_r(p)$ is properly embedded in $B(p, r)$, of 
 area at most $A$ and  for every $\delta>0$ there exists $\eta>0$ such that 
   $\sup\big({G^+\rest{W^u_\delta(p)}\big)}\geq \eta$ (see \cite[Cor 3.5]{bs8} for this definition). There is a parallel notion of 
    {\em quasi-contraction} in the stable direction.
 
 \medskip
 
It is worthwhile to state the  following result   of independent interest. 
 
 \begin{prop}\label{prop:qe}
 Let $(f_\la)_{\la\in \La}$ be a weakly stable and substantial holomorphic family of polynomial automorphisms. If there exists $\lo\in \La$ such that $f_\lo$ is quasi-expanding, then $f_\la$ is quasi-expanding for every $\la\in \La$.  
 \end{prop}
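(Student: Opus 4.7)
The strategy is to apply Proposition~\ref{prop:uniform} uniformly over the set of all saddle periodic points of $f_{\lo}$, transferring the uniform bounds defining quasi-expansion from $\lo$ to any $\la_1 \in \La$. Fix a relatively compact $\widetilde\La \Subset \La$ containing both $\lo$ and $\la_1$. Given any saddle $p(\la_1)$ of $f_{\la_1}$, weak stability yields its continuation $p(\la)$ throughout $\La$, and in particular a saddle $p_0 := p(\lo)$. For each $\delta \in (0, r)$, I apply Proposition~\ref{prop:uniform} to the disks $D_2 := W^u_r(p_0)$ and $D_1 := W^u_\delta(p_0)$.

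The key point is that the three input parameters $g_1, g_2, m$ of Proposition~\ref{prop:uniform} admit bounds that are uniform in $p_0$:
\begin{itemize}
\itm $g_1 = \sup G^+_{\lo}\rest{D_1} \geq \eta(\delta) > 0$, directly from the quasi-expansion of $f_{\lo}$;
\itm $g_2 = \sup G^+_{\lo}\rest{D_2} \leq C_r$, uniformly, by continuity of $G^+_{\lo}$ on a compact tubular neighborhood of $J^*_{\lo}$ of radius $r$;
\itm $m = \modul(D_2 \setminus \overline{D_1}) \geq m(\delta) > 0$, uniformly in $p_0$ (the delicate point, discussed below).
\end{itemize}
Granted these uniform estimates, Proposition~\ref{prop:uniform} produces constants $\rho(\delta)$, $g(\delta)$, $A(\delta)$, depending only on $\widetilde\La$, $\delta$ and the quasi-expansion data at $\lo$, such that for every $\la \in \widetilde\La$ and every saddle continuation $p(\la)$, the set $W^u_{\rho(\delta)}(p(\la))$ is properly embedded in $B(p(\la), \rho(\delta))$, of area at most $A(\delta)$, and satisfies $\sup G^+_\la\rest{W^u_{\rho(\delta)}(p(\la))} \geq g(\delta)$.

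Specializing to $\la = \la_1$ and fixing some $\delta^\star$ yields the uniform area bound required by quasi-expansion of $f_{\la_1}$. For the Green function condition at arbitrary scale $\delta_1 > 0$, tracking the constants in Lemma~\ref{lem:geometry} shows that $\rho(\delta) \to 0$ as $\delta \to 0$ (since the lower bound $d_1$ produced from $g_1 \geq \eta(\delta)$ tends to $0$). Choosing $\delta$ small enough that $\rho(\delta) \leq \delta_1$ forces $W^u_{\rho(\delta)}(p(\la_1)) \subset W^u_{\delta_1}(p(\la_1))$, so setting $\eta_1(\delta_1) := g(\delta)$ gives the desired uniform lower bound at $\la_1$.

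The main obstacle is the uniform modulus estimate (iii). I would prove it by contradiction and compactness. If there is a sequence of saddles $(p_{0,n})$ with $m_n := \modul\bigl(W^u_r(p_{0,n}) \setminus \overline{W^u_\delta(p_{0,n})}\bigr) \to 0$, the uniform area bound combined with Bishop's Theorem~\ref{thm:bishop} permits extraction of a subsequence with $p_{0,n} \to p_\infty$ and $W^u_r(p_{0,n}) \to W$ in the Hausdorff sense, where $W$ is an analytic subset of $B(p_\infty, r)$ of pure dimension $1$. The collapse of the modulus would force $W$ to degenerate (become multi-sheeted or pinched) at $p_\infty$ in a way incompatible with the limit of the subdisks $W^u_\delta(p_{0,n})$. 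The uniform Green function lower bound $\eta(\delta)$ prevents this: the Hölder continuity of $G^+$ provides a uniform Euclidean size $d_1 = d_1(\delta)$ of $W^u_\delta(p_{0,n})$ away from $p_{0,n}$, while $D_2 \subset B(p_{0,n}, r)$ gives the upper bound $d_2 = r$. Feeding these into the Grötzsch-type argument of Lemma~\ref{lem:geometry} (reversed, to estimate the modulus from $d_1$ and $d_2$ rather than the converse) yields the required positive lower bound on $m_n$, contradicting $m_n \to 0$.
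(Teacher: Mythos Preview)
Your overall strategy---apply Proposition~\ref{prop:uniform} uniformly over all saddle points---is correct and matches the paper's approach. However, there is a genuine gap in your treatment of the modulus bound (iii), and your concluding step is unnecessarily complicated compared to the paper's.

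\textbf{The gap.} The ``reversed Gr\"otzsch'' argument you invoke does not exist. Knowing only that $\sup_{\partial D_1}\|z - p_0\| \geq d_1$ and $D_2 \subset B(p_0, r)$ gives \emph{no} lower bound on $\modul(D_2\setminus\overline{D_1})$: in the unit disk, take $D_1 = D(0, 1-\varepsilon)$ to see that the modulus can be made arbitrarily small while $d_1$ stays bounded away from zero. Your compactness sketch is also insufficient: a modulus tending to zero does not by itself force any degeneration of the Hausdorff limit $W$, and you never actually use the uniform area bound $A$ in this step---which is precisely the ingredient that makes the argument work. The paper obtains the modulus bound directly by citing \cite[Thm~3.1]{bs8}, which states that for a properly embedded holomorphic disk in $B(p,r)$ of area at most $A$, the modulus of $W^u_r(p)\setminus \overline{W^u_{r/2}(p)}$ is bounded below by a constant $m(A,r)$. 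This is a nontrivial result (roughly, bounded area forces bounded distortion of the uniformizing map on compacta), and it is the missing idea in your argument.

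\textbf{The conclusion.} You attempt to verify the full ``for every $\delta>0$'' clause of quasi-expansion by letting $\delta\to 0$ and tracking that $\rho(\delta)\to 0$. This is plausible but delicate (it requires unwinding the compactness argument in Lemma~\ref{lem:compactness}), and it is unnecessary. The paper applies Proposition~\ref{prop:uniform} once, with $D_1 = W^u_{r/2}(p_0)$ and $D_2 = W^u_r(p_0)$, to produce a single scale $r'$ with uniform area bound $A'$ and uniform $\sup G^+_\la\rest{W^u_{r'}(p(\la))} \geq g' > 0$. Then \cite[Thm~3.4]{bs8} is invoked: this single-scale condition already implies quasi-expansion. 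This shortcut both simplifies the proof and avoids having to control how the constants of Proposition~\ref{prop:uniform} degenerate.
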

 
 \begin{proof}
 For $\la=\lo$, let    $r$,  $A$  be the uniform constants provided  by the definition of quasi-expansion. Let  
 $p(\lo)$ be a saddle point.  By \cite[Thm 3.1]{bs8}, the modulus of the annulus  $W^u_r(p(\lo))\setminus W^u_{r/2}(p(\lo))$ is bounded from below by a constant $m$ depending only on $A$ and $r$. By the Hölder continuity property of $G^+$ we get that 
 $\sup\big({G^+\rest{W^u_r(p)}}\big)\leq g_2(r)$ and by the definition of quasi-expansion, 
 $\sup\big({G^+\rest{W^u_{r/2}(p)}\big)}\geq g_1>0$. Therefore applying  Proposition \ref{prop:uniform} we obtain for every $\la\in \La$ positive  constants $r'$, $A'$ and $g'$ such that for every saddle point $p'$ for $f_{\la'}$   , $W^u_{r'}(p')$ is properly embedded in $B(p', r')$, of 
 area at most $A'$ and    $\sup\big({G^+\rest{W^u_{r'}(p')}\big)}\geq g'$. Finally, Theorem 3.4 in \cite{bs8} implies that $f_\la$ is 
 quasi-expanding. 
  \end{proof}

\end{document}